\newcommand{\vx}{{\mathbf{x}}}
\newcommand{\vA}{{\mathbf{A}}}
\newcommand{\vD}{{\mathbf{D}}}
\newcommand{\vH}{{\mathbf{H}}}
\newcommand{\vI}{{\mathbf{I}}}
\newcommand{\vP}{{\mathbf{P}}}
\newcommand{\vT}{{\mathbf{T}}}
\newcommand{\cN}{{\mathcal{N}}}
\newcommand{\cU}{{\mathcal{U}}}
\newcommand{\cX}{{\mathcal{X}}}
\newcommand{\EE}{\mathbb{E}}
\newcommand{\RR}{\mathbb{R}}
\newcommand{\vone}{{\mathbf{1}}}
\newcommand{\diag}{{\mathrm{diag}}} % matrix diagonal -> vector
\newcommand{\grad}{{\nabla}}    % gradient
\newcommand{\prox}{{\mathbf{prox}}}
\DeclareMathOperator*{\argmin}{arg\,min} % argmin
\newcommand{\St}{{\text{subject to}~}} % s.t.
\DeclareMathOperator*{\Min}{minimize}
\newcommand{\bdm}{\begin{displaymath}}
\newcommand{\edm}{\end{displaymath}}
\newcommand{\beq}{\begin{equation}}
\newcommand{\eeq}{\end{equation}}
\newcommand{\bfl}{\begin{flushleft}}
\newcommand{\efl}{\end{flushleft}}
\newcommand{\beqn}{\begin{eqnarray}}
\newcommand{\eeqn}{\end{eqnarray}}
\newcommand{\beqs}{\begin{align*}} % no equation numbers
\newcommand{\eeqs}{\end{align*}}  % no equation numbers
\newcommand{\Spvek}[2][r]{%
  \gdef\@VORNE{1}
  \left(\hskip-\arraycolsep%
    \begin{array}{#1}\vekSp@lten{#2}\end{array}%
  \hskip-\arraycolsep\right)}
\def\vekSp@lten#1{\xvekSp@lten#1;vekL@stLine;}
\def\vekL@stLine{vekL@stLine}
\def\xvekSp@lten#1;{\def\temp{#1}%
  \ifx\temp\vekL@stLine
  \else
    \ifnum\@VORNE=1\gdef\@VORNE{0}
    \else\@arraycr\fi%
    #1%
    \expandafter\xvekSp@lten
  \fi}
\newtheorem{assumption}{Assumption}
\newtheorem{theorem}{Theorem}
\newtheorem{lemma}{Lemma}
\newcommand{\NN}{\mathbb{N}}
\newcommand{\tran}{^{\mathsf{T}} }
\newcommand{\sy}{{\scriptstyle{\mathcal{Y}}}}
\newcommand{\ssy}{{\scriptscriptstyle{\mathcal{Y}}}}
\newcommand{\sz}{{\scriptstyle{\mathcal{Z}}}}
\newcommand{\ssz}{{\scriptscriptstyle{\mathcal{Z}}}}
\newcommand{\sw}{{\scriptstyle{\mathcal{W}}}}
\begin{document}

\title{\LARGE \bf Walkman: A Communication-Efficient Random-Walk Algorithm \\ for Decentralized Optimization}

\author{Xianghui Mao$^\diamond$, Kun Yuan$^{*}$, Yubin Hu$^\diamond$, Yuantao Gu$^\diamond$, Ali H. Sayed$^\ddagger$, and Wotao Yin$^\dagger$
\thanks{$^{\diamond}$X. Mao, Y. Hu and Y. Gu are with the Electrical Engineering Department, Tsinghua University, Beijing, China. Email: maoxh14@mails.tsinghua.edu.cn, hu-yb16@mails.tsinghua.edu.cn, gyt@tsinghua.edu.cn. Their work was partly funded by the National Natural Science Foundation of China (NSFC 61571263, 61531166005) and the National Key Research and Development Program of China (Project No. 2016YFE0201900, 2017YFC0403600).
}%
\thanks{$^{*}$K. Yuan is with the Electrical and Computer Engineering Department, University of California, Los Angeles, CA 90095. Email:{\small kunyuan}@ucla.edu. 
}%
\thanks{$^{\ddagger}$A. H. Sayed is with the Ecole Polytechnique Federale de Lausanne
(EPFL), School of Engineering, CH-1015 Lausanne, Switzerland. Email:
        {\small ali.sayed@epfl.ch}. His work was supported in part by NSF grant CCF-1524250.}
\thanks{$^{\dagger}$W. Yin is with the Mathematics Department, University of California, Los Angeles, CA 90095. Email: {\small wotaoyin@math.ucla.edu. His work was supported in part by NSF DMS-1720237 and ONR N000141712162.}}    
}

\maketitle

\begin{abstract}
This paper addresses consensus optimization problems in a multi-agent network, where all agents  collaboratively find a minimizer for the sum of their private functions. We develop a new decentralized algorithm in which each agent communicates only with its neighbors.

State-of-the-art decentralized algorithms  use communications between either {\em all pairs of adjacent agents} or a {\em random subset} of them at each iteration. Another class of algorithms uses a \emph{random walk incremental} strategy, which sequentially activates a succession of nodes; these  incremental algorithms require diminishing step sizes to converge to the  solution, so their convergence is relatively slow.

In this work, we propose a random walk algorithm that uses a fixed step size and converges faster than the existing random walk incremental algorithms. Our algorithm is also communication efficient. Each iteration uses only one link to communicate the latest information for an agent to another.
Since this communication rule mimics a man walking around the network, we call our new algorithm \emph{Walkman}. 
 We establish convergence for convex and nonconvex objectives. For decentralized least squares, we derive a linear rate of convergence and obtain a better communication complexity than those of other decentralized algorithms. Numerical experiments verify our analysis results.
\end{abstract}

%\begin{IEEEkeywords}
%IEEE, IEEEtran, journal, \LaTeX, paper, template.
%\end{IEEEkeywords}

\IEEEpeerreviewmaketitle

%\cite{estrin2001instrumenting,akyildiz2002survey}\cite{ren2007information,zhou2011multirobot}

\section{Introduction}
\label{sec-introduction}
Consider a directed graph $G=(V,E)$, where $V=\{1,2,\ldots,n\}$ is the set of agents and $E$ is the set of $m$ edges.
We aim to solve the following optimization problem:%{\color{black} already modified the Min in macros.tex}
\begin{align}\label{eq:mainprob}
\Min_{x\in\RR^p}\quad r(x) + \frac{1}{n}\sum_{i=1}^n f_i(x),
\end{align}
where each $f_i$ is locally held by agent $i$ and $r$ is a globally known regularizer. Both $f_i$ and $r$ can be non-convex.  % and Lipschitz differentiable.
%In addition, each function $f_i(x)$ is kept in agent $i$, and
An algorithm is decentralized if it relies only on communications between neighbors (adjacent agents); there is no central node that collects or distributes  information to the agents. %This decentralized approach is robust to failure of links and/or agents and scalable to the network size.
Decentralize consensus optimization finds applications in various areas including wireless sensor networks, multi-vehicle and multi-robot control systems, smart grid implementations, distributed adaptation and estimation \cite{sayed2014adaptive,sayed2014adaptation}, distributed statistical learning \cite{duchi2012dual,chen2015dictionary,chouvardas2012sparsity} and clustering \cite{zhao2015distributed}.

\subsection{The literature}
There are several decentralized numerical approaches to solve problem \eqref{eq:mainprob} or its special case without the regularizer $r$. One well-known approach lets {\em every} agent exchange information with all, or a random subset, of its direct neighbors per iteration. This is illustrated in Fig. \ref{fig:gossip_comm}, where agent $i$ is collecting information from all its neighbors (to update its local variables). This approach includes well-known algorithms such as diffusion~\cite{sayed2014adaptation,sayed2014adaptive} and consensus~\cite{nedic2009distributed,yuan2016convergence}, distributed ADMM (D-ADMM)~\cite{mateos2010distributed,mota2013d,shi2014linear,chang2014multi,aybat2017distributed}, EXTRA~\cite{shi2015extra}, PG-EXTRA~\cite{shi2015proximal}, DIGing~\cite{nedic2017achieving}, exact diffusion~\cite{yuan2017exact1}, NIDS~\cite{li2017decentralized}, and beyond. 
{\color{black} Among them, Push-Sum \cite{tsianos2012push},  EXTRAPUSH \cite{zeng2015extrapush} and subgradient-push \cite{nedic2016stochastic} are designed for directed graphs, while DIGing is for time-varying graphs.}
%It is known that these algorithms are able to converge fast. For example,
These algorithms have good convergence rates in the number of iterations. D-ADMM, EXTRA, DIGing, exact diffusion, and NIDS all converge linearly to the exact solution assuming strong convexity and using constant step-sizes. Their communication per iteration is relatively high.
% because every agent communicates with either \emph{one} random neighbor or \emph{all} neighbors.
Depending on the density of the network,  the costs are $O(n)$ computation and $O(n)$--$O(n^2)$ communications per iteration. %.  due to the constraint that each agent needs to communicate with one subset of its neighbors, the communication load {\em per iteration} of this strategy ranges
%from $O(n)$ when each agent communicates with {\em one} random neighbor, to the order $O(n^2)$ when each agent communicates with all its neighbors and the network is densely connected (for example, the complete network).

\begin{figure}
\begin{minipage}[t]{0.48\linewidth}
\centering
\includegraphics[width=1.5in]{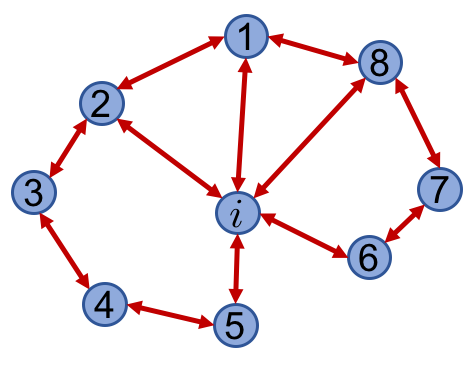}
\caption{\footnotesize communications in the $k$-th iteration for gossip type methods.}
\label{fig:gossip_comm}
\end{minipage}%
\begin{minipage}[t]{0.48\linewidth}
\centering
\includegraphics[width=1.5in]{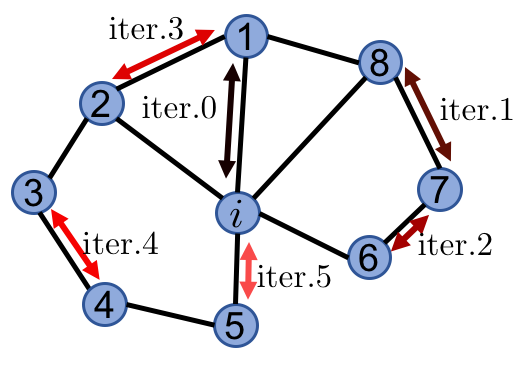}
\caption{\footnotesize  communications in $5$ adjacent iterations for randomized gossip type methods.}
\label{fig:randomized_gossip_comm}
\end{minipage}
\begin{minipage}[t]{0.48\linewidth}
\centering
\includegraphics[width=1.5in]{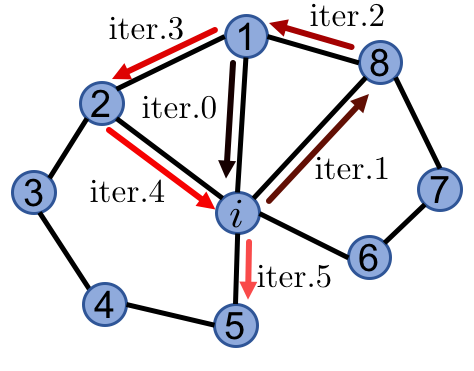}
\caption{\footnotesize communications in $5$ adjacent iterations for random walk based methods.}
\label{fig:rw_comm}
\end{minipage}
\begin{minipage}[t]{0.48\linewidth}
\centering
\includegraphics[width=1.5in]{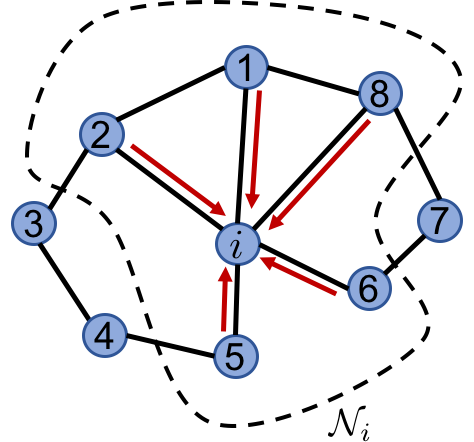}
\caption{\footnotesize communications in the $k$-th iteration for RW-ADMM.}
\label{fig:rw_admm_comm}
\end{minipage}
\end{figure}

% \begin{figure}[t]
%         %\includegraphics[scale=0.38]{diffusion_incremental.png}
%         \centering\includegraphics[width=0.45\textwidth]{diffusion_incremental.png}
%         \caption{\footnotesize Left: each agent communicates with all neighbors. Right: variable $x$ is updated along the random walk $(1,i,8,1,2,i,6,\cdots)$.}
%         \label{fig:introduction} \vspace{-5mm}
% \end{figure}

{\color{black} To alleviate the communication burden of decentralized optimization methods, another line of works \cite{boyd2006randomized, cao2006accelerated, hendrikx2018accelerated} study the communication pattern illustrated in Fig. \ref{fig:randomized_gossip_comm}, specifically, randomly activating one edge for bi-directional communication in each iteration. Among them, randomized gossip algorithms proposed  in \cite{boyd2006randomized, cao2006accelerated} are designed to solve average consensus problem. More recently, ESDACD \cite{hendrikx2018accelerated} implements such random activation to solve general smooth strongly-convex consensus problem. In general, the selected edges are not continuous, some global coordination is required to ensure non-overlapping of iterations.}%Due to the lack of continuity in selecting edges, the practical usage of this sort of methods is limited by the synchronization prerequisite.}

Another approach is based on the (random) walk (sub)gradient method~\cite{bertsekas1997new,ram2009incremental,johansson2009randomized, lopes2007incremental,lopes2010randomized}, where a variable $x$ will move through a (random) succession of agents in the network. At each iteration, the agent $i$ that receives $x$ updates $x$ using one of the subgradients of $f_i$, followed by sending $x$ to a (random) neighbor. Fig. \ref{fig:rw_comm} illustrates the communications along a walk $(1,i,8,1,2,i,5,\cdots)$. Since only one node and one link are used at each iteration, this approach only costs $O(1)$ computation and $O(1)$ communication per iteration. 
{\color{black} Thanks to the natural continuity of random walk, it is easy for the involved agents to coordinate.}
 The works  \cite{lopes2007incremental,lopes2010randomized} apply random walks in the context of adaptive networks and relies on stochastic gradients. 
%but deals with stochastic gradients and adaptive networks in order to track objects and adapt to drifts.
%Therefore, no matter whether the network is connected densely or not, the communication load {\em per iteration} for such incremental strategy is always $O(1)$, which is more efficient than the strategy described in the last paragraph. However, algorithms based on the incremental (sub)gradient converge slowly.
If these algorithms {\color{black} use} a constant step-size,  their iterates converge to a neighborhood of the solution. 
If the step-size is small, the neighborhood will be proportionally small but convergence becomes slow. For applications where convergence to the exact solution is required, decaying step-sizes must be used, which leads to slow convergence.
{\color{black} The authors of recent work \cite{shah2018linearly} study a
mixture of each node exchanging information with all pattern
and random walk pattern  as shown in Fig. \ref{fig:rw_admm_comm}, and propose RW-ADMM, where each node in the random walk starts computing after collecting information from all its neighbors. RW-ADMM is proved to converge under constant stepsize on the sacrifice of more communication per iteration.}

%{\color{black} There also}
%Hence, while their per-iteration cost is very low, this advantage is significantly offset by their slow convergence. It is generally difficult to decide, between this approach and one in the last paragraph, which costs less total communication overall.

%{\color{black}We notice that methods \cite{lopes2007incremental,lopes2010randomized} are also based on the random walk in the context of stochastic optimization and adaptive networks. In these work, constant step-sizes are used to enable tracking and adaptation to drifts. In this case, the iterates first converge linearly and, once reaching a small neighborhood of the solution, stop improving. While such small error is tolerable under the adaptation setting, the focus of this paper is on the exact solution to the deterministic problem \eqref{eq:mainprob}. A recent work \cite{ying2017variance} built on variance-reduction can guarantee exact converge with constant step-sizes over the {\em Hamilton-walk}. However, such Hamilton-walk is more restrictive than the random walk.} {\color{red}[Prof. Sayed: Our AVRG method has to work under the {\em random-reshuffling} setting, which means the walk is a Hamilton cycle. - Kun]}

{\color{black}\subsection{Contribution}}
\label{sec-intro-contribution}
In this paper, we propose a new random walk algorithm for decentralized consensus optimization that uses a \emph{fixed step-size} and converges to the \emph{exact solution}. It is significantly faster than the existing random-walk (sub)gradient incremental methods. 

{\color{black} When both $r$ and $f_i$ are possibly non-convex and $f_i$ are Lipschitz differentiable, we show that the iterates $x^k$ generated by Walkman will converge to the stationary point $x^\star$ almost surely. In addition, we establish a linear convergence rate for decentralized least squares.

Walkman is communication efficient. For decentralized least squares, the communication complexity of Walkman compares favorably with existing popular algorithms. The result is listed in Table \ref{tb:comm_compar}. {\color{black} Consider a network with transition probability matrix $\vP\in \RR^{n\times n}$ where $[\vP]_{ij} = p(i_{k+1} = j | i_k = i) \in [0,1]$. } We show that, if
\begin{align}\label{cond}
\lambda_2(\vP) \le 1 - \frac{\ln^{4/3}(n)}{m^{2/3}} \approx 1 - \frac{1}{m^{2/3}},
\end{align}
which implies the connectivity of the network is moderate or better, then our algorithm uses less communication  than {\em all} the state-of-the-art decentralized algorithms listed in the table. %Note that $m$ in \eqref{cond} is the number of edges in the network.

Our  simulation results support the claimed communication efficiency of  Walkman in least squares and other problems. 
%Furthermore, it is observed that  Walkman was able to escape from saddle points in the tested non-convex problems due to its random walk. In contrast, deterministic algorithms like D-ADMM and EXTRA were often trapped near  saddle points.
}

\begin{table}[h]
        \vspace{1mm}
        \centering
        \begin{tabular}{c|c}
                %\hline 
                {\bf Algorithm} & {\bf Communication Complexity} \\\hline
                \hline Walkman (proposed) & $O\bigg( {\ln\left(\frac{1}{\epsilon}\right)} \cdot  {\frac{n\ln^3(n)}{(1-\lambda_2(\vP))^2}}\bigg)$ \\
                \hline D-ADMM\cite{shi2014linear} & $O\left( {\ln\left(\frac{1}{\epsilon}\right)} \cdot  {\big(\frac{m}{(1-\lambda_2(\vP))^{1/2}}\big)}\right)$\\
                \hline EXTRA\cite{shi2015extra} & $O\left( {\ln\left(\frac{1}{\epsilon}\right)} \cdot  {\big(\frac{m}{1-\lambda_2(\vP)}\big)}\right)$\\
                \hline Exact diffusion\cite{yuan2017exact1} & $O\left( {\ln\left(\frac{1}{\epsilon}\right)} \cdot  {\big(\frac{m}{1-\lambda_2(\vP)}\big)}\right)$\\
                \hline
                {\color{black}  ESDACD\cite{hendrikx2018accelerated}} & $O\Big(\ln(\frac{1}{\epsilon})\cdot \frac{\sqrt{mn}}{\sqrt{(1-\lambda_2(\vP))}}\Big)$\\
                \hline 
                {\color{black}RW-ADMM \cite{shah2018linearly}} & $O\Big(\ln\left(\frac{1}{\epsilon}\right)\cdot \frac{m^2}{n\sqrt{(1-\lambda_2(\vP))}}\Big)$\\
                \hline
        \end{tabular}
        \caption{\small  Communication complexities of various algorithms {\color{black}when solving decentralized least squares problem} with $\lambda_2(\vP)$ is close to $1$. The network has $n$ nodes and $m$ arcs, $m\in [n, n(n-1)]$, with each node is connected by $m/n$ arcs. {\color{black}The quantity $\epsilon$ is the target accuracy, $\vP$ is the probability transition matrix, and $\lambda_2(\vP)$ is the second largest eigenvalue of $\vP$, one of the measures of the connectivity of the network.}}\label{tb:comm_compar}
        \vspace{1mm}
\end{table}

%The latter is roughly at
%\begin{align}
%        O\big(\ln(1/\epsilon) \cdot (\ln(n)/(1-\lambda))^2\big),
%\end{align}
%where $\epsilon$ is a target accuracy and $\lambda$ is a certain connectivity measure. Let $m$ be the number of edges in the network. 

\subsection{Discussion}
{\color{black}Walkman is a random-walk algorithm. Its efficiency depends on how long it takes the walk to visit all the agents. This is known as the \emph{cover time}. When Walkman only needs visit every agent at least once (which is the case to compute the consensus average),  the cover time is exactly the  complexity of Walkman. For  the cover times of random walks in various graphs, we refer the reader to  \cite[Chapter 11]{levin2017markov}.}

For more general problems, Walkman must visit each agent infinitely many times to converge. Its efficiency depends on how frequently all of the agents are revisited. For a random walk, this can be described by the \emph{mixing time} of the underlying Markov chain. Next, we present relevant assumptions.
\begin{assumption}\label{ass-markov}
The random walk $(i_k)_{k\ge 0}$, $i_k\in V$, forms an irreducible and aperiodic Markov chain with transition probability matrix $\vP\in \RR^{n\times n}$ where $[\vP]_{ij} = p(i_{k+1} = j | i_k = i) \in [0,1]$ and stationary distribution $\pi$ satisfying $\pi^T \vP = \pi^T$.
\end{assumption}
If the underlying network is a complete graph, we can choose $P$ so that $P_{ij}=p(i_{k+1} = j | i_k = i)=\frac{1}{n}$ for all $i,j\in V$, a case analyzed in \cite[\S2.6.1]{Peng_2015_AROCK} (barring asynchronicity therein). For a more general network that is connected, we need the \emph{mixing time} (for given $\delta>0$), which is defined as the smallest integer $\uptau(\delta)$ such that, for all $i\in V$,
\begin{align}
\big\|[\vP^{\uptau(\delta)}]_{i,:} - \pi\tran\big\|\le\delta\pi_*,\label{eq:ori_mix}
\end{align}
where $\pi_* := \Min_{i\in V} \pi_i$, and $[\vP^{\uptau(\delta)}]_{i,:}$ denotes the $i$th row of $\vP^{\uptau(\delta)}$.
%For the need of this paper, we set $\epsilon=\frac{1}{2n}$ and choose $\uptau(\delta)$ such that, for all $i, j\in V$,
%\begin{align}
%[\vP^{\uptau(\delta)}]_{ij}=p(i_{k+\uptau(\delta)} = j | i_k = i) \ge \frac{1}{2n}. \label{eq:mix_time_prop}
%\end{align}
This inequality states: regardless of current state $i$ and time $k$,  the probability of visiting each state $j$ after $\uptau(\delta)$ more steps is $(\delta \pi_*)$-close to $\pi_j$, that is, for all $i, j\in V$,
\begin{align}
\big|[\vP^{\uptau(\delta)}]_{ij} - \pi_j\big| \le \delta\pi_*. \label{eq:mix_time_prop}
\end{align}

A good reference for mixing time is \cite{levin2017markov}.
%By \cite[Thm 12.3]{levin2017markov},
The mixing time requirement, inequality \eqref{eq:ori_mix}, is guaranteed to hold for\footnote{Here is a trivial proof. For any $k\ge 1$, by definition, it holds $[\vP^{k}]_{i,:} -\pi\tran = \left([\vP^{k-1}]_{i,:} -\pi\tran\right)\vP$, and $\left([\vP^{k}]_{i,:} -\pi\tran\right)\vone=0$. Hence,  $\|[\vP^{k}]_{i,:} - \pi\tran\|\le \|[\vP^{k-1}]_{i,:} - \pi\tran\|\sigma(\vP)\le \cdots\le \|\vI_{i,:} -\pi\tran\|\sigma^k(\vP).$
% \sigma(\vP)^{\uptau(\delta)}(1-\pi_i)\le  \sigma(\vP)^{\uptau(\delta)}(1-\pi_*).$
We can bound $\|\vI_{i,:} - \pi\tran\|^2\le (1-\pi_i)^2 +\sum_{j\neq i}\pi_j^2 \le (1-\pi_*)^2 + (1-\pi_*)^2 =2 (1-\pi_*)^2$.
 Therefore, by ensuring $\sqrt{2}(\sigma(\vP))^{\uptau(\delta)}(1-\pi_*)\le \delta\pi_*$, which simplifies to condition \eqref{eq:def_J} by Taylor series, we  guarantee \eqref{eq:ori_mix} to hold.}
%\footnote{Our Markov chain is reversible. From inequality (12.13) in \cite{levin2017markov}, we have $|[P^{\uptau(\delta)}]_{ij} - 1/n| \le (\lambda_2(\vP))^{\uptau(\delta)}$. By letting $(\lambda_2(\vP))^{\uptau(\delta)} \le \frac{1}{2n}$ which equals to \eqref{eq:def_J}, we can guarantee \eqref{eq:mix_time_prop}.}
%\begin{definition}\label{def:mixing_time}
%For an irreducible and aperiodic Markov chain with transition matrix $\vP\in\RR^{n\times n},$ we define a fixed integer $\uptau(\delta)$ as
%\begin{align}
%\uptau(\delta) := \lceil \frac{3}{2}\ln\left(n\right)\cdot \frac{1}{1-\lambda_2(\vP)}\rceil,\label{eq:def_J}
%\end{align}
\begin{align}
\uptau(\delta) := \Big\lceil\frac{1}{1-\sigma(\vP)}\ln\frac{\sqrt{2}}{\delta \pi_*}\Big\rceil\label{eq:def_J}%\lceil \ln\left(2n\right) \big/ \ln(1/\lambda_2(\vP)) \rceil,\label{eq:def_J}
\end{align}
for  $\sigma(\vP) := \sup\big\{\| f\tran \vP\|/\|f\|:f\tran \vone = 0,f\in\RR^n\big\}.$
%The inequality \eqref{eq:ori_mix} also implies that %$\lambda_2(\vP)=\max\{\lambda_i(\vP):\lambda_i(\vP)\not=1\}$.
% Examples are given in \S V below.
%$\underline{\pi}(\vP)$ denotes the minimum of the stationary distribution of transition matrix $\vP$, i.e.,
%\begin{align}
%       \underline{\pi}(\vP) = \Min_{i} \pi_i \quad \mbox{where} \quad \pi = \pi\vP
%\end{align}
%and $\lambda_* = \max\{\lambda: \det(\vP-\lambda\vI)=0, \lambda<1\}$.
%{\color{red}By Theorem 12.3 of \cite{levin2017markov}, it holds }
%The inequality \eqref{eq:mix_time_prop} implies that if agent $i$ is activated at iteration $k$, then any agent can be activated at iteration $k+\uptau(\delta)$ with positive probability. Next, the following lemma clarifies that the unique minimizer $\sy^\star$ of $h_{\beta}(\sy)$ is exact the optimal solution to problem \eqref{ls-equi}.

We will use inequality \eqref{eq:mix_time_prop}  to show the sufficient descent of a Lyapunov function $L^k$, which was used  in \cite{hong2016convergence} and extended in \cite{WangYinZeng2015_global}. However, the analyses in \cite{hong2016convergence,WangYinZeng2015_global} only help us show $L^k\ge L^{k+1}$ and the existence of a lower bound. Because a random walk $(i_k)_{k\ge 0}$ is neither essentially cyclic nor i.i.d. random (except for complete graphs), we must use a new analytic technique, which is motivated by the recent paper \cite{sun2018Markov}. This new technique integrates mixing-time bounds with a conventional line of convergence analysis.

For decentralized least squares, we give the communication complexity bound of Walkman  in term of $\sigma(\vP)$. This quantity also determines the  communication complexity bounds of D-ADMM, EXTRA, and exact diffusion. Therefore, we can compare their communication complexities. For moderately well connected networks, we show in \S\ref{sc:comm} that the bound of Walkman is the lowest.

Even though D-ADMM, EXTRA, and exact diffusion use more total communications, their communications over different edges in each iteration are concurrent, so they \emph{may} take less {\em total communication time}. However, this time will increase and even overpass the Walkman time if different edges have different communication latencies and bandwidths, and if synchronization overhead is included. In an ideal situation where every communication takes the same amount of time and synchronization has no overhead, Walkman is found to be slower in time, unsurprisingly.

Although this paper does not discuss data privacy, Walkman  protects privacy better than diffusion, consensus, D-ADMM, etc., since the communication path is random and only the current iterate $x^k$ is sent out by the active agent. It is difficult for an agent to monitor the computation of its neighbors.

The limitation of this paper lies in that the linear convergence rate analysis applies only to least squares (though convergence {\color{black} and  a sublinear convergence rate} are established for more general problems) and  that the transition matrix is stationary. They need more space to address in our future work. Another direction to generalize this work is to create multiple simultaneous random walks, which may reduce the total solution time. The information exchange across random walks will require careful design and analysis.

In the rest of this paper, \S\ref{sc:derive} derives  Walkman, \S\ref{sc:converg} presents the main convergence result and the key lemmas,  \S\ref{sc:ls} focuses on least squares and obtains its linear convergence rate of Walkman, \S\ref{sc:comm} analyzes communication complexities and make comparisons between Walkman and other algorithms, \S\ref{sc:num} presents numerical simulation results, and finally \S\ref{sc:con} summarizes the findings of this paper.

\section{Derivation of Walkman}\label{sc:derive}
{\color{black}Walkman  can be derived by  modifying existing algorithms to use a random walk, for example, ADMM~\cite{GlowinskiMarroco1975_LapproximationPar,GabayMercier1976_dual} or PPG \cite{RyuYin2017_proximalproximalgradient}.} By defining
\begin{align}
\sy:=\mathrm{col}\{y_1, y_2, \cdots, y_n\}\in \RR^{np},
 \ F(\sy) := \sum_{i=1}^{n}f_i(y_i),
\end{align}
we can compactly rewrite problem \eqref{eq:mainprob} as
\begin{align}
\Min_{x,\ {\footnotesize \ssy}}\quad &r(x) + \frac{1}{n}F(\sy), \nonumber \\
\St \quad& \mathds{1}\otimes x - \sy = 0, \label{eq:mainprob-equiv}
\end{align}
where $\mathds{1}=[1~1\dots 1]\tran \in \RR^n$ and $\otimes$ is the Kronecker product. The constraint is equivalent to $x-y_i=0$ for $i=1,\dots,n$. The augmented Lagrangian for problem \eqref{eq:mainprob-equiv} is
\begin{align}
L_{\beta}\hspace{-1mm}\left( x, \sy; \sz \right) := &r(x) + \frac{1}{n} \Big(  F(\sy) + \langle \sz, \mathds{1}\otimes x - \sy \rangle\notag\\
& + \frac{\beta}{2}\|\mathds{1}\otimes x- \sy\|^2 \Big), \label{eq-Lagrangian}
\end{align}
where $\sz:={\rm col}\{z_1,\cdots,z_n\} \in \RR^{np}$ is the dual variable (Lagrange multipliers) and $\beta >0$ is a constant parameter. The standard ADMM algorithm is an iteration that minimizes $L_{\beta}\left( x, \sy; \sz \right)$ in $x$, then in $\sy$, and finally updates $\sz$.  {\color{black}Applying ADMM to problem \eqref{eq:mainprob-equiv} yields (not our algorithm)
%The first step, $x^{k+1} =\argmin_{x}L_{\beta}\left( x, \sy^k; \sz^k \right)$, reduces to
%Applying it to problem \eqref{eq:mainprob-equiv} yields
\begin{subequations} \label{eq:admm3'-full}
        \begin{align}
                        \bar{x}^{k+1} & = \frac{1}{n}\sum_{i=1}^n(y_i^{k}-\frac{z_i^k}{\beta}), \label{eq:admm3xa'-full}\\
                 x^{k+1} & = \prox_{\frac{1}{\beta}r}(\bar{x}^{k+1}),\label{eq:admm3xb'-full}\\
%\\
%                {x}^{k+1} & = \prox_{\frac{1}{\beta}r}\Big(\frac{1}{n}\sum_{i=1}^n\big(y_i^{k}-\frac{z_i^k}{\beta}\big)\Big), \label{eq:admm3xb'-full}
% \\%\noeqref{eq:admm3xb'}\\
%   \nonumber \text{for all}~i&\in V~\text{do in parallel:}\\[-3pt]
                 y_i^{k+1} & = \prox_{\frac{1}{\beta}f_i}\Big(x^{k+1} + \frac{z_i^k}{\beta}\Big), \quad \forall i\in V \label{eq:admm3y'-full}\\
 %                \argmin_{y_i}\big\{ f_i(y_i) + \frac{\beta}{2}\big\|x^{k+1} + \frac{z_i^k}{\beta} - y_i\big\|^2 \big\}, \label{eq:admm3y'-full}\\
                 z_i^{k+1} & = z_i^k + \beta(x^{k+1} - y_i^{k+1}),\quad \hspace{1.8mm} \forall i\in V  \label{eq:admm3z'-full}
        \end{align}
\end{subequations}
}\hspace{-2mm} where
the proximal operator %appearing in update \eqref{eq:admm3xb'-full} and \eqref{eq:admm3y'-full}
is defined as
$\prox_{\gamma f}(x) := \argmin_{y} f(y) + \frac{1}{2\gamma}\|y - x\|_2^2.$
Since computing the sum in \eqref{eq:admm3xa'-full} needs information from all the agents, it is too expensive to realize in a decentralized fashion. However, if each ADMM iteration updates only $y_{i_k}$ and $z_{i_k}$ in \eqref{eq:admm3y'-full} and \eqref{eq:admm3z'-full}, keeping the remaining $\{y_{i}\}_{i\neq i_k}$, $\{z_{i}\}_{i\neq i_k}$ unchanged,
% when $i=i_k$. Specifically, if agent $i_k$ is activated at iteration $k$, then its local variables $y_{i_k}$ and $z_{i_k}$ are updated while all other variables $\{y_{i}\}_{i\neq i_k}$, $\{z_{i}\}_{i\neq i_k}$  remain unchanged.
%{\color{red} Delete: then \eqref{eq:admm3xa'-full} can be maintained by a sparse update:\:} 
{\color{black} the algorithm then changes to: }
\begin{subequations} \label{eq:admm3'}
        \begin{align}
%        \bar{x}^{k+1} & = \frac{1}{n}\sum_{i=1}^n(y_i^{k}-\frac{z_i^k}{\beta}), \label{eq:admm3xb'}\noeqref{eq:admm3xb'}\\
        x^{k+1} & = \prox_{\frac{1}{\beta}r}(\bar{x}^{k+1}), \label{eq:admm3x'}\noeqref{eq:admm3x'}\\
        y_i^{k+1} & =
        \begin{cases}
        \prox_{\frac{1}{\beta}f_i}(x^{k+1}+\frac{z_i^k}{\beta}),& i=i_k\\
        y_i^{k},& \text{otherwise}
        \end{cases}\label{eq:admm3y'}\noeqref{eq:admm3y'}\\
        z_i^{k+1} & =
        \begin{cases}
        z_i^k + \beta(x^{k+1} - y_i^{k+1}) ,& i=i_k\\
        z_i^k,& \text{otherwise}
        \end{cases}\label{eq:admm3z'}\\
        \label{eq-x-bar-update}
\bar{x}^{k+2}& = \bar{x}^{k+1} + \frac{1}{n}\big(y_{i_k}^{k+1} - \frac{z_{i_k}^{k+1}}{\beta}\big) - \frac{1}{n}\big(y_{i_k}^{k} - \frac{z_{i_k}^{k}}{\beta}\big).
        \end{align}
\end{subequations}
If we initialize $\{y_i^0\}_{i=1}^n$ and $\{z_i^0\}_{i=1}^n$ so that
\begin{align}\label{initialization}
        \bar{x}^{1} = \frac{1}{n}\sum_{i=1}^n(y_i^{0}-\frac{z_i^0}{\beta}) = 0,
\end{align}
for example, by simply setting $y_i^0 = 0$ and $z_i^0 = 0$, $i=1,\cdots,n$, then {\color{black} with only the $i_k$-th part of variables $\sy$ and $\sz$ updated in each \eqref{eq:admm3'}, mathematical induction implies that } \eqref{eq-x-bar-update}  automatically maintains
\begin{align*}\bar{x}^{k+1} & = \frac{1}{n}\sum_{i=1}^n(y_i^{k}-\frac{z_i^k}{\beta}).\end{align*} %Since only variables $y_{i_k}$ and $z_{i_k}$
% are updated at iteration $k$, step \eqref{eq:admm3xb'-full} or \eqref{eq:admm3xb'} for every next iteration now uses just local variables and is thus decentralized:
% \begin{align} \label{eq-x-bar-update}
% \bar{x}^{k+2} = \bar{x}^{k+1} + \frac{1}{n}\big(y_{i_k}^{k+1} - \frac{z_{i_k}^{k+1}}{\beta}\big) - \frac{1}{n}\big(y_{i_k}^{k} - \frac{z_{i_k}^{k}}{\beta}\big).
% \end{align}
{\color{black} Note that,  the second equation of the initialization condition in \eqref{initialization} can be conducted locally, whereas the constraint on $\bar{x}^1$ only involves the agent where the random walk starts. Therefore, a simple initialization satisfying \eqref{initialization}  can be realized without any ``consensus''-type preprocessing. }
We call \eqref{eq:admm3'}  Walkman. Its decentralized implementation  is presented in Algorithm 1. The variable {$\bar{x}^k$} is updated by agent $i_k$ and passed as a token to agent $i_{k+1}$. %around in the network. % and is updated by the activated agent and passed to a neighbor.
%At the end of iteration $k$, only the token ${x}^{k}$ is transmitted,
%and
%No other communication is needed. This is in sharp contrast to the strategy in which {\em all} agents exchange variables to neighbors per iteration. From this perspective, we can expect Walkman to be more communication-efficient.

\begin{table}[h!]
        \noindent \hrule height 1pt
        \vspace{1mm}
        \noindent \textbf{\normalsize Algorithm 1: Walkman}
        \vspace{1mm}
        \hrule height 1pt
        \vspace{1mm}
        \noindent
        \textbf{\normalsize Initialization:} \normalsize initialize $y_i^0$ and $z_i^0$ so that \eqref{initialization} holds;\\
        \textbf{\normalsize { Repeat}} \normalsize for $k=0,1,2,\dots$ until convergence

        \normalsize {\color{white}an} agent $i_k$ do:\\
        \normalsize{\color{white}world} update $x^{k+1}$ according to \eqref{eq:admm3x'};\\
        \normalsize{\color{white}world} update $y_{i_k}^{k+1}$ according to \eqref{eq:admm3y'} or \eqref{eq:admmy-linearized};\\
        \normalsize{\color{white}world} update $z_{i_k}^{k+1}$ according to \eqref{eq:admm3z'};\\
        \normalsize{\color{white}world} update $\bar{x}^{k+2}$ according to \eqref{eq-x-bar-update}; \\
        {\color{white}world} send $\bar{x}^{k+2}$ via edge $(i_{k},i_{k+1})$ to agent $i_{k+1}$;\\
        \textbf{{End}}\\[-2mm]
        \hrule
        \vspace{-1mm}
\end{table}

%{\color{black}\noindent
\noindent \textbf{Use $\nabla f_i$ instead of $\prox_{f_i}$.}
If the regularizer $r$ is proximable, i.e., $\prox_{\gamma r}$ can be computed in $O(n)$ or $O(n{\rm polylog}(n))$ time, the computational resources are mainly consumed on solving the minimization problem in step \eqref{eq:admm3y'}.
We can avoid it  by using the cheaper gradient descent, like in diffusion, consensus, EXTRA, DIGing, exact diffusion, and NIDS. If $f_i$ is differentiable, %it is possible to update $y_i$ using the gradient of $f_i$, that is,
we replace \eqref{eq:admm3y'} with the update: %. To be more specific, variable $y_{i}$ can be updated by
        \begin{align}\label{eq:admmy-linearized}\tag{\ref{eq:admm3y'}'}
                y_{i}^{k+1} =
                \begin{cases}
                        x^{k+1} + \frac{1}{\beta}z_i^k - \frac{1}{\beta} \grad f_i (y_i^k), & i = i_k\\
                        y_i^k, & \mbox{otherwise.}
                \end{cases}
        \end{align}
Compare to \eqref{eq:admm3y'}, update \eqref{eq:admmy-linearized} saves computations but can  cause more iterations and thus more total communications. One can choose between {\eqref{eq:admm3y'}} and \eqref{eq:admmy-linearized} based on computation and communication tradeoffs in applications.  {\color{black} In the next section, we are going to analyze their performance.}

\section{Convergence}\label{sc:converg}
In this section we present convergence of Walkman
%Below, we let $\cX:= \{(x, \sy)\in \RR^p \times \RR^{np}: x = y_i ~\text{for}~ i \in V\}$
%denote the set of feasible solutions to Problem \eqref{eq:mainprob-equiv}, and $\cY:=\{\sy\in\RR^{np}:y_i=y_j ~\text{for}~ i,j \in V\}$ be the projection of $\cX$ to $\sy$.
based on  the following assumptions. % are needed to establish convergence.
\begin{assumption}\label{coercivity}
The objective function in original problem \eqref{eq:mainprob}, $r(x) + \frac{1}{n}\sum_{i=1}^n  f_i(x)$, is bounded from below over $\RR^p$ {\color{black} ( let $\underline{f}$ denote the lower bound)} and is coercive over $\RR^p$, that is, $r(x) + \frac{1}{n}\sum_{i=1}^n  f_i(x)\to\infty$ for any sequence $x^k\in\RR^p$ and $\|x^k\|\to\infty.$
%$r(x) +\frac{}{}F(\sy)$ is bounded from below over $\cY$, and $F(\sy)$ is  coercive over $\cY$, that is, $F(\sy^k)\to \infty$ for any sequence $\sy^k\in \cY$  and $\|\sy^k\|\to \infty.$
\end{assumption}
Assumption \ref{coercivity} is \emph{not} over $\RR^{np}$ but $\RR^p$, so it  is easy to satisfy.
\begin{assumption}\label{ass-lipschitz}
        Each $f_i(x)$ is $L$-Lipschitz differentiable, that is, for any $u,v\in\RR^p$,
        \begin{align}
        \|\nabla f_i(u) - \nabla f_i(v)\| \le L\|u-v\|, \quad i = 1, \dots, n. \label{eq-ass-lip}
        \end{align}
\end{assumption}
\begin{assumption}\label{ass-prox}
        The lower semi-continuous function $r(x)$ is $\gamma$-semiconvex, that is, $r(\cdot)+\frac{\gamma}{2}\|\cdot\|^2$ is convex or equivalently,
        %       called restricted prox-regular if, for any $M>0$ and bounded set $T\subset \dom(r)$, there exists $\gamma>0$ such that
        \begin{align}
                r(y)+\frac{\gamma}{2}\|y-x\|^2\geq r(x)+\langle d, y-x\rangle, \forall x, y,\forall d\in \partial r(x). \label{eq-prox-regular}
        \end{align}
        %  (If $T\setminus S_M$ is empty, \eqref{eq-prox-regular} is satisfied.)
\end{assumption}
We first introduce the notation used in our analysis.
The first time that the Markov chain $(i_k)_{k\ge 0}$ hits agent $i$ is denoted as $T_i:=\min\{k: i_k = i\}$, and their max over $i$ is
\begin{equation}\label{eq:T_def}
T:=\max\lbrace T_1, \cdots, T_n\rbrace.
\end{equation} 
By iteration $T$, every agent has been visited at least once. Based on Assumption \ref{ass-markov}, the Markov chain is positive recurrent and, therefore, $\Pr(T< \infty) = 1.$
%is defined as the maximum of the hitting times, and $T_i$ denotes .
%$T$ 
For  $k>T$, let $\tau (k,i)$ denote the  iteration of the last visit to agent $i$ before $k$, that is, 
\begin{equation}\label{eq:tau_def}
\tau (k,i):=\max\lbrace k':i_{k'}=i,k'<k \rbrace.
\end{equation} %(If agent i has not been visited before iteration k, $\tau (k,i)$ is set as 0).

Next, we define two separate Lyapunov functions for Walkman updating $\sy$ using \eqref{eq:admm3y'} (computing $\prox_{\frac{1}{\beta}f_i}$) and  \eqref{eq:admmy-linearized} (computing $\grad f_i (y_i^k)$):
\begin{align}
        L_{\beta}^k &:= L_\beta(x^k, \sy^k; \sz^k),\label{eq:defLk} \\
        M_{\beta}^k &:= L^k_\beta + \frac{L^2}{n}\sum_{i=1}^{n}\| y_{i}^{\tau (k,i)+1} - y_{i}^{\tau (k,i)}\|^2,\label{eq:defMk}
\end{align}
where $L_{\beta}\hspace{-1mm}\left( x, \sy; \sz \right)$ is defined  in \eqref{eq-Lagrangian}.
We establish the descent of $L_\beta^k$ (resp. $M_{\beta}^k$) for Walkman using \eqref{eq:admm3y'} (resp.  \eqref{eq:admmy-linearized}).
%Once again, neither $f_i$'s nor the regularizer $r$ need to be convex.
\begin{lemma}\label{lm:1}
%Under Assumptions \ref{coercivity} and \ref{ass-lipschitz}, for $\beta\ge 2L+2$,  the iterates $(x^k,\sy^k,\sz^k)$ generated by Walkman \eqref{eq:admm3'}, or Algorithm 1, satisfy the following  properties:
%        \begin{enumerate}
%                \item $L_\beta^k - L_\beta(x^{k+1},\sy^k;\sz^k) =  \frac{\beta}{2}\|x^{k+1} - x^k\|^2;$
%                \item $L_\beta(x^{k+1},\sy^k;\sz^k) \hspace{-1mm}-\hspace{-1mm} L_\beta^{k+1} \hspace{-1mm}\ge\hspace{-1mm} \frac{1}{n}\|\sy^{k+1} \hspace{-1mm}-\hspace{-1mm} \sy^k\|^2;$
%                \item $L_\beta^k - L_\beta^{k+1} \ge  \frac{\beta}{2}\|x^{k+1} - x^k\|^2 + \frac{1}{n}\|\sy^k - \sy^{k+1}\|_2^2;$
%                \item $(L_\beta^k)_{k\ge 0}$ is lower bounded and convergent.
%        \end{enumerate}
        Under Assumptions \ref{coercivity}, \ref{ass-lipschitz}, and \ref{ass-prox},  the iterates $(x^k, \sy^k, \sz^k)_{k\ge 0}$ generated by Walkman \eqref{eq:admm3'}, or Algorithm 1, satisfy the following  properties:
        \begin{enumerate}
%                \item $L_\beta^k - L_\beta^{k+1} \ge\frac{\beta - \gamma}{2}\|x^k - x^{k+1}\|^2+ \frac{1}{n}\|y_{i_k}^{k} - y_{i_k}^{k+1}\|_2^2; $
                \item  for \eqref{eq:admm3y'} and $\beta\ge\max\{\gamma, 2L+2\}$,  $(L_\beta^k)_{k\ge 0}$ is lower bounded and convergent;
                \end{enumerate}
                \begin{enumerate}[~~1')]
                \item for \eqref{eq:admmy-linearized} and $\beta> \max\{\gamma, 2L^2+L+2\}$, $(M_{\beta}^k)_{k\ge 0}$ is lower bounded and convergent;
                \end{enumerate}
                \begin{enumerate}
                \setcounter{enumi}{1}
%                \item $\|x^k-x^{k+1}\|_2 \to 0$, and $\|y_{i}^{k} - y_{i}^{k+1}\|_2\to 0$ as $k\to \infty$ for each $i=1,\ldots,n$;
                \item for Walkman with either \eqref{eq:admm3y'} or \eqref{eq:admmy-linearized}, the sequence $(x^k,\sy^k,\sz^k)_{k\ge 0}$ is bounded.
        \end{enumerate}
 
\end{lemma}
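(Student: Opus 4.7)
I would split each iteration $k$ into three sequential sub-steps ($x$-update, $y_{i_k}$-update, $z_{i_k}$-update) and track how $L_\beta$ changes across each. The first two sub-steps each minimize $L_\beta$ in their respective variable over a function with positive curvature (thanks to $\beta$ dominating $\gamma$ and $L$), hence each produces a quadratic descent. The third, the dual update, raises $L_\beta$ by a quantity proportional to $\|z_{i_k}^{k+1}-z_{i_k}^k\|^2$. The entire game is to bound this dual ascent by a multiple of the primal descent, so that choosing $\beta$ sufficiently large yields a net nonpositive change. Lower boundedness plus monotone descent then give convergence, and coercivity upgrades this to iterate boundedness.

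\textbf{Proximal version (part 1).} For the $x$-step, the mapping $x\mapsto L_\beta(x,\sy^k,\sz^k)$ is $(\beta-\gamma)$-strongly convex by Assumption~\ref{ass-prox} whenever $\beta\ge\gamma$, and $x^{k+1}$ is its minimizer; this gives descent at least $\tfrac{\beta-\gamma}{2}\|x^{k+1}-x^k\|^2$. For the $y_{i_k}$-step, the map $y_{i_k}\mapsto L_\beta$ is $(\beta-L)/n$-strongly convex because $f_{i_k}$ is weakly convex with modulus $L$ and the penalty has modulus $\beta$; minimization yields descent at least $\tfrac{\beta-L}{2n}\|y_{i_k}^{k+1}-y_{i_k}^k\|^2$. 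For the $z$-step, a direct calculation gives ascent $\tfrac{1}{n\beta}\|z_{i_k}^{k+1}-z_{i_k}^k\|^2$. Crucially, the optimality condition for the $y$-step yields $z_{i_k}^{k+1}=\nabla f_{i_k}(y_{i_k}^{k+1})$, and the same identity held at the previous visit $\tau(k,i_k)$, so (using that $y_{i_k}$ and $z_{i_k}$ are frozen between visits) $z_{i_k}^k=\nabla f_{i_k}(y_{i_k}^k)$. Lipschitz smoothness then gives $\|z_{i_k}^{k+1}-z_{i_k}^k\|^2\le L^2\|y_{i_k}^{k+1}-y_{i_k}^k\|^2$, and plugging in shows that $\beta\ge 2L+2$ suffices to make the $y$-descent dominate the $z$-ascent, so $L_\beta^{k+1}\le L_\beta^k$.

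\textbf{Linearized version (part 1$^\prime$).} Here the $y$-step is only a forward step, so $z_{i_k}^{k+1}=\nabla f_{i_k}(y_{i_k}^k)$ but $z_{i_k}^k=\nabla f_{i_k}(y_{i_k}^{\tau(k,i_k)})$; the dual ascent is controlled by the \emph{lagged} increment $L^2\|y_{i_k}^{\tau(k,i_k)+1}-y_{i_k}^{\tau(k,i_k)}\|^2$ rather than the current one. This is precisely what the auxiliary term in $M_\beta^k$ stores: when agent $i_k$ is revisited, the old stored increment for that agent is removed (absorbing the dual ascent) and replaced by the new one $\|y_{i_k}^{k+1}-y_{i_k}^k\|^2$. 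The $y$-descent itself is no longer from exact minimization but is obtained via the descent lemma for $L$-smooth $f_{i_k}$ combined with the explicit form of the linearized update; one gets a term proportional to $\|y_{i_k}^{k+1}-y_{i_k}^k\|^2$ with a coefficient that, together with the replacement term from $M_\beta^k$, becomes negative once $\beta>2L^2+L+2$. This is the main obstacle of the proof: the dual ascent is not paid for by the current primal motion, and the Lyapunov augmentation must be designed so that the random-walk schedule of visits does not accumulate uncovered debt.

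\textbf{Lower boundedness and iterate boundedness (part 2).} Using the identity $z_i^k=\nabla f_i(y_i^k)$ (proximal case) or $z_i^k=\nabla f_i(y_i^{\tau(k,i)})$ with a Young's-inequality split absorbed by the lagged term in $M_\beta^k$ (linearized case), I would lower-bound
\[
L_\beta^k\ge r(x^k)+\frac{1}{n}\sum_{i=1}^n f_i(x^k)+\frac{c}{n}\sum_{i=1}^n\|x^k-y_i^k\|^2
\]
for some $c>0$, and similarly for $M_\beta^k$. Assumption~\ref{coercivity} then bounds this below by $\underline f$. Combined with monotone descent from parts~1/1$^\prime$, $L_\beta^k$ (resp.\ $M_\beta^k$) converges. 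For boundedness, the same lower bound plus $L_\beta^k\le L_\beta^0$ shows $r(x^k)+\tfrac{1}{n}\sum_i f_i(x^k)$ is bounded above, so coercivity forces $\{x^k\}$ bounded; the quadratic remainder then forces each $\{y_i^k\}$ bounded, and Lipschitz smoothness of $f_i$ yields $\{z_i^k\}$ bounded. The same chain works for $M_\beta^k$ since the extra lagged term is nonnegative.
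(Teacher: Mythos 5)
Your proposal follows essentially the same route as the paper: the same per-sub-step descent/ascent accounting ($x$-step descent $\tfrac{\beta-\gamma}{2}\|x^{k+1}-x^k\|^2$, $y$-step descent of order $\tfrac{\beta-L}{2n}$, dual ascent $\tfrac{1}{n\beta}\|z_{i_k}^{k+1}-z_{i_k}^k\|^2$), the same key identity $z_{i_k}^{k+1}=\nabla f_{i_k}(y_{i_k}^{k+1})$ (resp.\ $\nabla f_{i_k}(y_{i_k}^{k})$ in the linearized case) to bound the dual ascent by primal motion, the same lagged-increment augmentation in $M_\beta^k$, and the same coercivity chain for lower and iterate boundedness. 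The one detail to add is that the identity $z_{i_k}^{k}=\nabla f_{i_k}(y_{i_k}^{k})$ presupposes agent $i_k$ has been visited before, so the descent inequalities hold only for $k>T$; since $T<\infty$ almost surely, eventual monotonicity plus the lower bound still yields convergence, exactly as the paper observes.
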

%\begin{proof}
See the Appendix for a proof.
%\end{proof}
% \begin{align}\label{eq:gk_def}
%                g^k := \left[\begin{array}{c} w^k \\ \nabla_{\ssy} L_\beta^{k+1}\\ \nabla_{\ssz} L_\beta^{k+1} \end{array}\right]\in \partial L_\beta^{k+1}
%        \end{align}
%        for $w^k:= -\frac{\beta}{n}(y_{i_k}^{k+1} - y_{i_k}^k) +\frac{1}{n}(z_{i_k}^{k+1} - z_{i_k}^k) \in \partial_x  L_\beta^{k+1}$
Based on Lemma \ref{lm:1}, we establish the convergence of subgradients of $ L_\beta^{k}$.
\begin{lemma}\label{lm:gk_convg}Take Assumptions \ref{ass-markov}--\ref{ass-prox} and Walkman with $\beta$ given in Lemma \ref{lm:1}.
%  
%$\beta\hspace{-0.8mm} > \hspace{-0.8mm}\max\{\gamma\hspace{-0.2mm}, \hspace{-0.2mm}2L+2\}$
%there exists a sequence of subgradient, $g^k\in\partial L_{\beta}^{k+1}$, that has an almost surely convergent subsequence, that is, for some $(k_j)_{j\ge 0}$:
%Furthermore, f
For any given subsequence (including the whole sequence) with its index $(k_s)_{s\ge 0}$, { there exists a sequence $\{g^k\}_{k\ge 0}$ with $g^k\in\partial L_{\beta}^{k+1}$ containing} an almost surely convergent subsubsequence $(g^{k_{s_j}})_{j\ge 0}$, that is,
$$\Pr\Big(\lim_{j\to\infty}\|g^{k_{s_j}}\| = 0\Big)=1.$$
\end{lemma}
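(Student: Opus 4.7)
The plan is to extract an explicit element $g^{k+1} \in \partial L_\beta^{k+1}$ from the Walkman optimality conditions, bound its norm by a window of consecutive iterate differences, and then combine the summability of those differences (from Lemma \ref{lm:1}) with the mixing-time bound \eqref{eq:mix_time_prop} to pull out an almost-surely convergent subsubsequence.

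\textbf{Step 1 (subgradient element).} From the $x$-update \eqref{eq:admm3x'} I read off $\beta(\bar{x}^{k+1} - x^{k+1}) \in \partial r(x^{k+1})$, and from the prox $y$-update \eqref{eq:admm3y'} together with \eqref{eq:admm3z'} I get $z_{i_k}^{k+1} = \nabla f_{i_k}(y_{i_k}^{k+1})$. Using $\bar{x}^{k+1} = \frac{1}{n}\sum_i(y_i^k - z_i^k/\beta)$ and the fact that only coordinate $i_k$ of $\sy$ and $\sz$ changes at step $k$, I would assemble $g^{k+1} \in \partial L_\beta^{k+1}$ whose $x$-component collapses to $\frac{\beta}{n}(y_{i_k}^k - y_{i_k}^{k+1}) + \frac{1}{n}(z_{i_k}^{k+1} - z_{i_k}^k)$. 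Because the prox update at the last visit $\tau(k+1,i)$ enforces $\nabla f_i(y_i^{k+1}) = z_i^{k+1}$ for every $i$, the $y_i$- and $z_i$-components reduce to multiples of $y_i^{k+1} - x^{k+1}$. For $i=i_k$ this equals $-\frac{1}{\beta}(z_{i_k}^{k+1} - z_{i_k}^k)$; for $i\ne i_k$, I would telescope through the update at $\tau(k+1,i)$ to write
\[
y_i^{k+1} - x^{k+1} \;=\; -\tfrac{1}{\beta}\bigl(z_i^{k+1} - z_i^{\tau(k+1,i)}\bigr) + \bigl(x^{\tau(k+1,i)+1} - x^{k+1}\bigr),
\]
which is a sum of one-step increments over the window $[\tau(k+1,i),k]$.

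\textbf{Step 2 (window bound).} Applying Cauchy--Schwarz to the telescoped expression and letting $\tau(k+1) := \min_{i\in V}\tau(k+1,i)$, I expect a bound of the form
\[
\|g^{k+1}\|^2 \le C\sum_{j=\tau(k+1)}^{k}\Bigl(\|x^{j+1}-x^j\|^2 + \|y_{i_j}^{j+1}-y_{i_j}^j\|^2 + \|z_{i_j}^{j+1}-z_{i_j}^j\|^2\Bigr),
\]
with $C$ depending only on $\beta$ and $n$. The length of the window $k - \tau(k+1)$ is exactly the backward cover time at iteration $k+1$.

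\textbf{Step 3 (summability from Lemma \ref{lm:1}).} Inspecting the descent inequality that underlies Lemma \ref{lm:1}, each step's decrease of $L_\beta^k$ (or $M_\beta^k$ for the linearized variant) dominates a positive multiple of $\|x^{k+1}-x^k\|^2 + \|y_{i_k}^{k+1}-y_{i_k}^k\|^2 + \|z_{i_k}^{k+1}-z_{i_k}^k\|^2$. Since $L_\beta^k$ is bounded below and monotone, telescoping gives
\[
\sum_{k=0}^{\infty}\Bigl(\|x^{k+1}-x^k\|^2 + \|y_{i_k}^{k+1}-y_{i_k}^k\|^2 + \|z_{i_k}^{k+1}-z_{i_k}^k\|^2\Bigr) < \infty \quad \text{a.s.}
\]

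\textbf{Step 4 (cover-window subsubsequence).} Fix any subsequence $(k_s)$. Using the mixing-time inequality \eqref{eq:mix_time_prop}, pick $W$ a sufficiently large multiple of $\uptau(\delta)$ so that the probability of failing to visit \emph{some} agent in any length-$W$ window is bounded by a constant $\rho<1$. A Borel--Cantelli argument along $(k_s)$ then yields, almost surely, infinitely many indices $k_{s_j}$ for which $k_{s_j} - \tau(k_{s_j}+1) \le W$. Along this subsubsequence the window bound in Step 2 becomes
\[
\|g^{k_{s_j}+1}\|^2 \le C\sum_{j=k_{s_j}-W}^{k_{s_j}}\bigl(\cdots\bigr),
\]
which is a tail of the convergent series in Step 3 and therefore tends to $0$ almost surely. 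Filling in arbitrary $g^k \in \partial L_\beta^{k+1}$ at all other indices completes the construction.

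\textbf{Main obstacle.} The delicate part is Step 4: the random window $k - \tau(k+1)$ is not uniformly bounded along sample paths, so one cannot deterministically truncate. I will need to combine the geometric mixing estimate \eqref{eq:mix_time_prop} with the summability in Step 3 in a Borel--Cantelli-style argument that simultaneously controls (a) the cover-window length and (b) the tail of the iterate-difference series along the chosen subsubsequence. The remaining algebraic steps (identifying $g^{k+1}$ and the Cauchy--Schwarz window bound) are mechanical once the optimality conditions of the prox updates are written out.
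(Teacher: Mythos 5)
Your Steps 1--3 are sound and essentially match the paper's ingredients: the explicit subgradient element read off from the optimality conditions of \eqref{eq:admm3x'}--\eqref{eq:admm3z'}, and the summability of the squared one-step differences coming from the descent and lower-boundedness in Lemma \ref{lm:1}. The genuine gap is in Step 4, and it is not merely technical. First, the events ``every agent is visited in the window $[k_s-W,k_s]$'' are not independent across $s$, so the second Borel--Cantelli lemma does not apply as invoked; you would need to pass to well-separated indices and use a conditional (L\'evy-type) version. Second, and more seriously, even after that repair the set of indices $s$ for which the backward cover window is short depends on the sample path, so the subsubsequence $(k_{s_j})$ you extract is \emph{random}. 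The lemma, as stated and as used in Theorem \ref{thm:convg}, requires a fixed subsubsequence of the prescribed $(k_s)$ along which $\|g^{k_{s_j}}\|\to 0$ almost surely; a path-dependent index set does not deliver that, and you cannot trivially de-randomize it because on the complementary (long-window) event your bound gives no control on $\|g^{k+1}\|$.

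The paper sidesteps the backward cover time altogether. Instead of bounding the full gradient $g^{k+1}$ deterministically (which forces the window back to the last visit of \emph{every} agent), it bounds only the partial vector $q_{i_k}^{k-\uptau(\delta)-1}$ --- the components associated with the agent active at time $k$, evaluated $\uptau(\delta)+1$ steps earlier --- by a window of fixed, deterministic length $\uptau(\delta)+2$. The mixing bound \eqref{eq:mix_time_prop} then enters through a conditional expectation over the random index $i_k$, namely $\EE\big(\|q_{i_k}^{k-\uptau(\delta)-1}\|^2\,\big|\,\chi^{k-\uptau(\delta)}\big)\ge(1-\delta)\pi_*\|g^{k-\uptau(\delta)-1}\|^2$, since every agent's block is sampled with probability at least $(1-\delta)\pi_*$. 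Combined with the summability this yields $\EE\|g^k\|\to 0$ along the \emph{whole} sequence, after which Markov's inequality and the first Borel--Cantelli lemma produce a deterministic subsubsequence of any given $(k_s)$. If you wish to keep the cover-window route, you would at minimum need to convert it into a statement about $\EE\|g^k\|$ (e.g.\ split on the window length, use the boundedness of the iterates from Lemma \ref{lm:1} to control the long-window contribution, and let $W$ grow with $k$), which essentially reintroduces the expectation argument the paper already uses.
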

%Specifically, this also applies for the whole sequence $(k)_{k\ge 0}$.
\begin{proof}
The proof sketch  is summarized as follows. 
        \begin{enumerate}

  \item We construct $g^k\in\partial L_{\beta}^{k+1}$ and show that its subvector $ q_i^{k}:=(g^k_x,g_{y_i}^k,g_{z_i}^k)$ satisfies $\lim_{k\to\infty}\EE\| q_{i_k}^{k-\uptau(\delta)-1}\|^2=0$, where the mixing time $\uptau(\delta)$ is defined in \eqref{eq:def_J}.
  \item  For $k\ge 0$, define the filtration of sigma algebras:
        \begin{align*}
        \chi^k\hspace{-0.8mm}:=\hspace{-0.8mm}\sigma\left(x^0,  \cdots, x^k, \sy^0, \cdots, \sy^k, \sz^0, \cdots, \sz^k, i_0, \cdots, i_k\right)\hspace{-0.8mm}.
        \end{align*}
We show that $$\EE\Big(\| q_{i_k}^{k-\uptau(\delta)-1}\|^2\Big|\chi^{k-\uptau(\delta)}\Big)\ge\hspace{-0.8mm} (1-\delta)\pi_*\|g^{k-\uptau(\delta)-1}\|^2,$$ where $\pi_*$ is the minimal value in the Markov chain's stationary distribution. From this bound and the result in step 1), we can get $\lim_{k\to\infty}\EE\|g^k\|=0$.
  \item From the result in the last step, we use some inequalities and the Borel-Cantelli lemma to obtain an almost surely convergent subsubsequence of $g^k$.
\end{enumerate}
 The details of these steps are given in the Appendix.
\end{proof}

\begin{theorem}\label{thm:convg} %{\color{black} revised}
         Under Assumptions \ref{ass-markov}--\ref{ass-prox}, for $\beta\hspace{-0.8mm} > \hspace{-0.8mm}\max\{\gamma\hspace{-0.5mm}, \hspace{-0.5mm}2L+2\}$ (resp. $\beta\hspace{-0.8mm} > \hspace{-0.8mm}\max\{\gamma\hspace{-0.5mm}, \hspace{-0.5mm}2L^2+L+2\}$),  it holds that any limit point $(x^*,\sy^*,\sz^*)$  of  the sequence $(x^k,\sy^k,\sz^k)$ generated by Walkman with \eqref{eq:admm3y'} (resp. \eqref{eq:admmy-linearized}) satisfies: $x^*= y_i^*$, $i=1,\ldots,n$, where $x^*$ is a stationary point of \eqref{eq:mainprob}, with probability $1$,  that is,
        \begin{align}\label{eq:sta}
                \Pr\Big(0\in \partial r(x^*) + \frac{1}{n}\sum_{i=1}^n\nabla f_i(x^*)\Big)=1.
        \end{align}
        If the objective of \eqref{eq:mainprob} is convex, then $x^*$ is a minimizer.
%The above holds with probability $1$ if $(i_k)_{k\ge 0}$ follows the random walk rule instead.
\end{theorem}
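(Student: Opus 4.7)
The plan is to combine the boundedness guarantee of Lemma \ref{lm:1} with the vanishing-subgradient guarantee of Lemma \ref{lm:gk_convg}, then pass to the limit in the block components of $\partial L_\beta^{k+1}$ to read off the first-order optimality conditions of \eqref{eq:mainprob}. By Lemma \ref{lm:1} the iterate sequence $(x^k,\sy^k,\sz^k)_{k\ge 0}$ is bounded along every sample path, so it admits at least one accumulation point. Fix such a limit $(x^*,\sy^*,\sz^*)$ together with an index subsequence $(k_s)$ along which $(x^{k_s},\sy^{k_s},\sz^{k_s})\to(x^*,\sy^*,\sz^*)$; Lemma \ref{lm:gk_convg} then produces a subsubsequence $(k_{s_j})$ along which $\|g^{k_{s_j}}\|\to 0$ almost surely, for some $g^k\in \partial L_\beta^{k+1}$.

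The next step is to unpack $g^k$ using \eqref{eq-Lagrangian}. Its $z_i$-, $y_i$-, and $x$-blocks are
\begin{align*}
g_{z_i}^k &= \tfrac{1}{n}(x^{k+1}-y_i^{k+1}),\\
g_{y_i}^k &= \tfrac{1}{n}\bigl(\nabla f_i(y_i^{k+1})-z_i^{k+1}-\beta(x^{k+1}-y_i^{k+1})\bigr),\\
g_x^k &\in \partial r(x^{k+1}) + \tfrac{1}{n}\textstyle\sum_i z_i^{k+1} + \tfrac{\beta}{n}\textstyle\sum_i(x^{k+1}-y_i^{k+1}).
\end{align*}
On the probability-one event where $g^{k_{s_j}}\to 0$, passing to the limit along $(k_{s_j})$ gives, in sequence: (i) $x^*=y_i^*$ for every $i$, from the $z_i$-block (primal feasibility); (ii) $z_i^*=\nabla f_i(x^*)$, from the $y_i$-block together with the continuity of $\nabla f_i$ guaranteed by Assumption \ref{ass-lipschitz} and with (i); and (iii) a sequence $d^{k_{s_j}+1}\in \partial r(x^{k_{s_j}+1})$ converging to $d^*:=-\tfrac{1}{n}\sum_i \nabla f_i(x^*)$, from the $x$-block together with (i) and (ii).

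The main obstacle is concluding from (iii) that $d^*\in \partial r(x^*)$, because $r$ is only lower semi-continuous and possibly nonconvex, so $\partial r$ is not continuous in general. This is precisely where Assumption \ref{ass-prox} enters: the shifted function $\tilde r(\cdot):=r(\cdot)+\tfrac{\gamma}{2}\|\cdot\|^2$ is proper, lsc, and \emph{convex}, and a routine limit-inferior argument on its convex subgradient inequality shows that the graph of $\partial\tilde r$ is sequentially closed. Since $\partial r(x)=\partial\tilde r(x)-\gamma x$, outer semi-continuity transfers to $\partial r$, delivering $d^*\in \partial r(x^*)$ and hence $0\in\partial r(x^*)+\tfrac{1}{n}\sum_i\nabla f_i(x^*)$ with probability one, which is \eqref{eq:sta}. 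The convex case is immediate, since for a convex objective every stationary point is a global minimizer. The same plan handles the linearized variant \eqref{eq:admmy-linearized} by replacing $L_\beta^k$ with $M_\beta^k$ (and the corresponding lower bound on $\beta$), noting that the last term in $M_\beta^k$ does not contribute to the block subgradients used above.
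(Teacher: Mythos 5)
Your proposal is correct and follows the same overall architecture as the paper's proof: boundedness from Lemma \ref{lm:1}, a vanishing subgradient subsubsequence from Lemma \ref{lm:gk_convg}, and a limit passage to read off stationarity. The one place where you genuinely diverge is the closure step. The paper asserts that $L_\beta(x^{k_s},\sy^{k_s},\sz^{k_s})\to L_\beta(x^*,\sy^*,\sz^*)$ ``by continuity'' and then invokes the definition of the general subgradient, which builds outer semicontinuity into $\partial L_\beta$ provided the function values converge attentively; since $r$ is only lower semi-continuous, that continuity claim is not automatic and is the weakest link in the paper's argument. You instead isolate the $x$-block, extract $d^{k+1}\in\partial r(x^{k+1})$, and prove $d^*\in\partial r(x^*)$ by passing to the convex shift $\tilde r=r+\tfrac{\gamma}{2}\|\cdot\|^2$ from Assumption \ref{ass-prox}, whose subgradient inequality closes the graph via a liminf and lower semicontinuity alone, with no function-value convergence needed. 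This is more self-contained and arguably patches the paper's gap; the price is the extra bookkeeping of unpacking all three blocks explicitly. One small point both treatments leave implicit: the subgradients $g^{k_{s_j}}$ live at the iterates indexed $k_{s_j}+1$ while the convergent subsequence is indexed $k_{s_j}$, so you should note that $\|x^{k}-x^{k+1}\|,\|\sy^k-\sy^{k+1}\|,\|\sz^k-\sz^{k+1}\|\to 0$ along the relevant subsequence (a consequence of the summability in \eqref{eq:summable}), so that the shifted iterates converge to the same limit point.
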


\begin{proof}
By statement 2) of Lemma \ref{lm:1}, the sequence $(x^k,\sy^k,\sz^k)$ is bounded so there exists a convergent subsequence $(x^{k_s}, \sy^{k_s}, \sz^{k_s})$ converging to a limit point $(x^*,\sy^*,\sz^*)$ as $s\to \infty$. By continuity, we have
\begin{align}
L_\beta(x^*,\sy^*,\sz^*)= \lim_{s\to \infty} L_\beta(x^{k_s}, \sy^{k_s}, \sz^{k_s}).
\end{align}
Lemma \ref{lm:gk_convg} finds a subsubsequence $g^{k_{s_j}}\in \partial L_\beta^{k+1}$ such that  $\Pr\Big(\lim_{j\to\infty} \|g^{k_{s_j}}\| = 0\Big)=1$.
By the definition of \emph{general subgradient} (cf. \cite[Def. 8.3]{VarAnalysis}), we have $0\in\partial L_\beta(x^*, \sy^*, \sz^*)$.
% Repeat the proof, we can extend this result to a sub-subsequence $(k_{j_s})_{s\ge 0}$, and  $\Pr\Big(\lim_{j\to\infty} \|g^{k_{s_j}}\| = 0\Big)=1$ holds true.
%        Hence, we still have a  general subgradient  $0\in\partial L_\beta(x^*, \sy^*, \sz^*)$ with probability one.

This completes the proof of Theorem \ref{thm:convg}.
 \end{proof}

{\color{black}
Next, we derive the convergence rate for Walkman with a specific initialization, $z_i^0 = \nabla f_i(y_i^0)$. Specifically, to avoid consensus preprocessing, we need $\nabla f_i(y_i^0) = \beta y_i^0$. In other words, $y_i^0$ is a stationary point for the problem $\Min_{y\in\RR^p} f_i(y)-\frac{\beta}{2}\|y\|^2 $. This preprocessing can be accomplished without communication.
\begin{theorem}\label{thm:convg_rate_nonconvex}[Gradient sublinear convergence]
Under  Assumptions \ref{ass-markov}--\ref{ass-prox} and Walkman with $\beta$ given in Lemma \ref{lm:1}, and local variables initialized as $\nabla f_i(y_i^0) = \beta y_i^0=z_i^0, ~\forall i\in\{1,\cdots, n\}$, there exists a sequence $\{g^k\}_{k\ge 0}$ with $g^k\in\partial L_{\beta}^{k+1}$ satisfying 
\begin{align}
\min_{k\le K}\EE\|g^{k}\|^2\le \frac{\bar{C}}{K}( L_{\beta}^0-\underline{f}), ~\forall K> \uptau(\delta) +2,
\end{align}
where $\bar{C}$ is a constant merely depending on $\beta, L, \gamma$ and $n,  \uptau(\delta) $. With $\beta, L, \gamma$ independent from the network structure, one has $\bar{C}\sim O(\frac{\uptau(\delta)^2 + 1}{(1-\delta)n\pi_*})$, where $\uptau(\delta)$ is defined in \eqref{eq:def_J}.
\end{theorem}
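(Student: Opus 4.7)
The plan is to upgrade the qualitative convergence of Lemma~\ref{lm:gk_convg} into a finite-time rate by (i) telescoping the descent of $L_\beta^k$ established in Lemma~\ref{lm:1}, (ii) using the construction of $g^k\in\partial L_\beta^{k+1}$ from the proof of Lemma~\ref{lm:gk_convg} to bound $\|g^k\|^2$ by local iterate differences of the form $\|y_{i}^{k+1}-y_{i}^{k}\|^2$, and (iii) invoking the mixing-time inequality \eqref{eq:mix_time_prop} to convert node-$i_k$-indexed quantities into the full sum over nodes. The role of the stated initialization $\nabla f_i(y_i^0)=\beta y_i^0=z_i^0$ is purely to give an explicit, communication-free upper bound on $L_\beta^0$: with this choice the dual slack vanishes and one readily sees that $L_\beta^0 - \underline f$ is finite and independent of the network topology, which is what makes the right-hand side of the claimed bound meaningful.

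First I would reopen the descent inequality proved inside Lemma~\ref{lm:1} and write it as
\[
\EE\bigl[L_\beta^k - L_\beta^{k+1}\,\bigm|\,\chi^k\bigr] \;\ge\; c_0\,\EE\bigl[\|y_{i_k}^{k+1}-y_{i_k}^{k}\|^2 \,\bigm|\,\chi^k\bigr],
\]
for some $c_0>0$ depending only on $\beta,L,\gamma$. Telescoping from $0$ to $K$ and using $L_\beta^{K+1}\ge\underline f$ gives
\[
\sum_{k=0}^{K}\EE\|y_{i_k}^{k+1}-y_{i_k}^{k}\|^2 \;\le\; \tfrac{1}{c_0}\bigl(L_\beta^0-\underline f\bigr).
\]
Next, reusing the explicit construction of $g^k$ from step~1) of Lemma~\ref{lm:gk_convg}, the subvector $q_{i_k}^{k-\uptau(\delta)-1}$ is a linear combination of the increments $y_{i_k}^{j+1}-y_{i_k}^{j}$ that occurred during the window $[k-\uptau(\delta)-1,\,k]$; by Cauchy--Schwarz over that window of length at most $\uptau(\delta)+2$, I would obtain
\[
\|q_{i_k}^{k-\uptau(\delta)-1}\|^2 \;\le\; C_1\bigl(\uptau(\delta)+2\bigr)\!\!\sum_{j=k-\uptau(\delta)-1}^{k}\!\!\|y_{i_j}^{j+1}-y_{i_j}^{j}\|^2 ,
\]
with $C_1$ a constant in $\beta,L,n$.

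Now comes the conversion step, which I expect to be the main obstacle and the source of the stated dependence on $(1-\delta)\pi_*$. The mixing-time identity \eqref{eq:mix_time_prop} established in the proof of Lemma~\ref{lm:gk_convg} yields
\[
\EE\bigl(\|q_{i_k}^{k-\uptau(\delta)-1}\|^2 \,\bigm|\,\chi^{k-\uptau(\delta)}\bigr) \;\ge\; (1-\delta)\pi_*\,\|g^{k-\uptau(\delta)-1}\|^2.
\]
Taking total expectations, summing over $k$ from $\uptau(\delta)+1$ to $K$, and combining with the two displays above I would arrive at
\[
(1-\delta)\pi_*\sum_{k=0}^{K-\uptau(\delta)-1}\!\!\EE\|g^{k}\|^2 \;\le\; \tfrac{C_1(\uptau(\delta)+2)^2}{c_0}\bigl(L_\beta^0-\underline f\bigr),
\]
because each increment $\|y_{i_j}^{j+1}-y_{i_j}^{j}\|^2$ is counted at most $\uptau(\delta)+2$ times in the double sum. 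Dividing by $K$, using $\min_{k\le K}\EE\|g^k\|^2 \le \tfrac{1}{K}\sum_{k=0}^{K-\uptau(\delta)-1}\EE\|g^k\|^2$ (valid whenever $K>\uptau(\delta)+2$), and setting
\[
\bar C \;:=\; \frac{C_1(\uptau(\delta)+2)^2}{c_0(1-\delta)\pi_*}
\]
produces exactly the bound in the theorem statement, with the advertised scaling $\bar C\sim O\!\bigl(\tfrac{\uptau(\delta)^2+1}{(1-\delta)n\pi_*}\bigr)$ once the network-independent constants $\beta,L,\gamma$ are absorbed and one notes $c_0=\Theta(1/n)$ from the $1/n$ weighting in the Lagrangian \eqref{eq-Lagrangian}. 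The delicate point, which I would treat carefully, is to ensure that the window-counting argument is tight enough so that the factor $\uptau(\delta)^2$ (not a higher power) appears in $\bar C$; any looser counting would inflate this to $\uptau(\delta)^3$ and miss the stated rate.
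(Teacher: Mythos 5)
Your overall strategy matches the paper's: both proofs bound $\|g^{k-\uptau(\delta)-1}\|^2$ (through the mixing-time inequality \eqref{eq:mix_time_prop} applied to the subvector $q_{i_k}^{k-\uptau(\delta)-1}$) by a window of iterate increments of length $\uptau(\delta)+2$, and then control those increments by the telescoped descent of the Lyapunov function. The bookkeeping differs slightly: the paper sums the resulting inequality $\EE\|g^k\|^2\le C'(\EE L_\beta^k-\EE L_\beta^{k+\uptau(\delta)+2})$ only over the arithmetic subsequence $k=K-l(\uptau(\delta)+2)$, so the windows are disjoint and telescope cleanly, paying the second factor of $\uptau(\delta)$ when it averages over only $\lfloor K/(\uptau(\delta)+2)\rfloor$ terms; you instead sum over all $k$ and pay that factor through the overlap multiplicity of the windows. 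Both routes land on $\bar C\sim O(\uptau(\delta)^2)$, and your worry about an accidental $\uptau(\delta)^3$ is unfounded with either counting. Two small repairs: your window bound on $q_{i_k}^{k-\uptau(\delta)-1}$ must also carry the $\|x^t-x^{t+1}\|^2$ and $\|\sz^t-\sz^{t+1}\|^2$ increments (cf.\ \eqref{eq:q_bound_ori}), which is harmless since the descent inequality \eqref{eq:Lk_descent} controls the $x$-increments and Lemma \ref{lm:z_boundby_y} reduces the $\sz$-increments to $\sy$-increments; and $\min_{k\le K}\EE\|g^k\|^2$ is bounded by the average over $K-\uptau(\delta)$ terms, not by $\tfrac1K$ times their sum, so a factor $K/(K-\uptau(\delta))$ must be absorbed into $\bar C$ (the paper's final line has the same cosmetic looseness).

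The one point you genuinely misstate is the role of the initialization $\nabla f_i(y_i^0)=\beta y_i^0=z_i^0$. It is not there merely to make $L_\beta^0-\underline f$ explicit; it is what makes your very first step legal. The descent inequalities of Lemmas \ref{lemma:yz_descent} and \ref{lm:lyapunov_descent} rely on the identity $z_i^k=\nabla f_i(y_i^{k})$ (resp.\ $\nabla f_i(y_i^{\tau(k,i)})$), which without special initialization only holds after agent $i$ has been visited once; that is why those lemmas are stated for $k>T$, with $T$ the random cover time. Telescoping ``from $0$ to $K$'' then would only bound the increments by $L_\beta^{T+1}-\underline f$, and $L_\beta^{T+1}$ is not controlled by $L_\beta^0$ since the Lyapunov function may increase before time $T$. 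The stated initialization forces $z_i^0=\nabla f_i(y_i^0)$ for every $i$, so \eqref{eq:z_gradf} and hence the descent hold for all $k\ge 0$, which is exactly the observation the paper's proof opens with. With that sentence added, your argument goes through.
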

\begin{proof}
The detailed proof can be found in Appendix \ref{appendixC}.
\end{proof}
It is possible, though more cumbersome, to show a sublinear convergence rate under a more general initialization. We decided not to pursue it.
%To be noted, the prerequisite on initialization in Theorem \ref{thm:convg_rate_nonconvex} is introduced for the simplicity of derivation.
%A similar form of convergence rate can be derived under any initialization satisfying \eqref{initialization}. 
}

\section{Linear convergence for Least Squares}\label{sc:ls}
In this section, we focus on the decentralize least-squares problem:
%\begin{align}\label{eq:prob_ls}
%       \Min_x\quad \frac{1}{n}\sum_{i=1}^{n}\frac{1}{2}\|\vA_i x - b_i\|^2,
%\end{align}
%and derive the communication efficiency of Walkman.
%%We consider the following least square problem
%%which can be rewritten as
%Rewrite \eqref{eq:prob_ls} as
\begin{align}\label{ls-equi}
        \Min&\quad \frac{1}{n}\sum_{i=1}^{n}\frac{1}{2}\|\vA_i y_i - b_i\|^2, \nonumber \\
        \St& \quad y_1 = y_2 = \cdots = y_n = x,
\end{align}
which is a special case \eqref{eq:mainprob-equiv} with regularizer $r=0$, local objective $f_i(y_i):=\frac{1}{2}\|\vA_i y_i - b_i\|^2$ and gradient $\nabla f_i(y_i) = \vA_i\tran (\vA_iy_i - b_i)$. The Lipschitz constant $L$ in Assumption \ref{ass-lipschitz} equals ${\sigma^*_{\max}} :=\max_{i} \sigma_{\max}(\vA_i\tran\vA_i)$, where $\sigma_{\max}(\cdot)$ takes largest eigenvalue.
%{\color{black} added} 
To assure that there exists a single optimum to problem \eqref{ls-equi}, the following analysis is based on the assumption that the matrix $\sum_{i=1}^n \vA_i\tran\vA_i$ is reversible, which implies   \eqref{ls-equi} is strongly convex.

We apply Walkman (or Algorithm 1) updating with $\prox_{f_i}$, i.e., utilizing \eqref{eq:admm3y'}, and starting from
\begin{align}
y_i^0 =& (\vA_i\tran\vA_i - \beta\vI)^{-1}(\vA_i\tran b_i), ~\forall i\in V, \label{eq:y_init}\\
z_i^0 =& \nabla f_i(y_i^0) = \vA_i\tran(\vA_iy_i^0 - b_i),~ \forall i\in V,\label{eq:z_init}
\end{align}
where \eqref{eq:y_init} is well defined for $\beta > \max_i \sigma_{\max}(\vA_i\tran \vA_i)$.
This is to ensure $y_i^0 - {z_i^0}/{\beta} = 0$ and thus \eqref{initialization} for all $k\ge 0$.

We analyze the complexities of Walkman for problem \eqref{ls-equi} based on the Lyapunov function $h_{\beta}(\sy):\RR^{np}\rightarrow\RR$,
\begin{align}\label{eq:hdef}
h_{\beta}(\sy) := &\  \frac{1}{n}\sum_{i=1}^n\big(\frac{\beta}{2}{\|y_i^k\|}^2-\frac{1}{2}{\|\vA_iy_i^k\|}^2+\frac{1}{2}{\|b_i\|}^2\big)\nonumber \\
&\quad -\frac{\beta}{2}{\|\vT\sy + c\|}^2,
\end{align}
where $\vT := \frac{1}{n}\hspace{-1mm}\big[(\vI\hspace{-0.5mm} -\hspace{-0.5mm} \frac{1}{\beta}\vA_1\tran \vA_1), \dots\hspace{-0.5mm}, (\vI \hspace{-0.5mm}-\hspace{-0.5mm}\frac{1}{\beta}\vA_n\tran \vA_n)\big]\hspace{-1mm} \in \hspace{-1mm} \RR^{p\times np}$ and $c :=  \frac{1}{n\beta}\sum_{i=1}^n \vA_i\tran b_i \in \RR^p$.
%\begin{align}
%\vT :=& \frac{1}{n}\hspace{-1mm}\Big[(\vI\hspace{-0.5mm} -\hspace{-0.5mm} \frac{1}{\beta}\vA_1\tran \vA_1), \dots\hspace{-0.5mm}, (\vI \hspace{-0.5mm}-\hspace{-0.5mm}\frac{1}{\beta}\vA_n\tran \vA_n)\Big]\hspace{-1mm} \in \hspace{-1mm} \RR^{p\times np}, \label{eq:x_y}\\
%c := & \frac{1}{n\beta}\sum_{i=1}^n \vA_i\tran b_i \in \RR^p.\label{eq:z_y}
%\end{align}
%Then, our analysis on the algorithm is split into $3$ parts.
%\begin{enumerate}
%\item
%Now we study the properties of $h_{\beta}(\sy^k)$.
The following lemma relates $h_{\beta}(\sy)$ and the augmented Lagrangian sequence. %function \eqref{eq:admm3'-full}.
\begin{lemma}
        With initialization \eqref{eq:y_init} and \eqref{eq:z_init}, it holds that
        \begin{align}\label{h=L}
                h_\beta(\sy^k) = L_{\beta}(x^{k+1},\sy^k;\sz^k).
        \end{align}
\end{lemma}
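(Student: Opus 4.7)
The plan is to reduce the identity to a direct algebraic expansion once two invariants, installed by the initialization \eqref{eq:y_init}--\eqref{eq:z_init} and preserved by the Walkman recursion, are in place. The strategy is to (i) establish the dual invariant $z_i^k = \nabla f_i(y_i^k) = \vA_i\tran(\vA_i y_i^k - b_i)$ for \emph{every} $i\in V$ and every $k\ge 0$, (ii) use it together with $r\equiv 0$ to rewrite $x^{k+1}$ as $\vT\sy^k+c$, and (iii) expand $L_\beta(x^{k+1},\sy^k;\sz^k)$ and watch the cross terms collapse into the formula for $h_\beta(\sy^k)$.

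For step (i), I would proceed by induction on $k$. The base case is exactly \eqref{eq:z_init}. For the inductive step, if $i\neq i_k$ then both $y_i$ and $z_i$ are unchanged and the identity is inherited. If $i=i_k$, the first-order optimality condition of the proximal update \eqref{eq:admm3y'} reads $\nabla f_i(y_i^{k+1}) + \beta(y_i^{k+1}-x^{k+1}) - z_i^k = 0$, which combined with the multiplier update \eqref{eq:admm3z'} yields $z_i^{k+1} = z_i^k + \beta(x^{k+1}-y_i^{k+1}) = \nabla f_i(y_i^{k+1})$. Step (ii) is then immediate: since $r\equiv 0$, \eqref{eq:admm3x'} collapses to $x^{k+1}=\bar x^{k+1}$, and substituting the invariant into the averaging formula gives
\[
x^{k+1} \;=\; \frac{1}{n}\sum_{i=1}^n\Big[\big(\vI - \tfrac{1}{\beta}\vA_i\tran \vA_i\big)y_i^k + \tfrac{1}{\beta}\vA_i\tran b_i\Big] \;=\; \vT\sy^k+c,
\]
which is what produces the $-\tfrac{\beta}{2}\|\vT\sy^k+c\|^2$ piece of $h_\beta$ as $-\tfrac{\beta}{2}\|x^{k+1}\|^2$.

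Step (iii) is then a direct expansion. The dual invariant makes $\langle z_i^k, y_i^k\rangle = \|\vA_i y_i^k\|^2 - \langle \vA_i y_i^k, b_i\rangle$, so combining $f_i(y_i^k)=\tfrac{1}{2}\|\vA_i y_i^k-b_i\|^2$ with $-\langle z_i^k, y_i^k\rangle$ in the Lagrangian leaves $-\tfrac{1}{2}\|\vA_i y_i^k\|^2 + \tfrac{1}{2}\|b_i\|^2$, matching the $\vA$- and $b$-dependent terms of $h_\beta$. Expanding $\tfrac{\beta}{2}\|x^{k+1}-y_i^k\|^2$ yields the $\tfrac{\beta}{2}\|y_i^k\|^2$ term of $h_\beta$ plus cross pieces that, together with the remaining $\langle z_i^k, x^{k+1}\rangle$, average to
\[
\Big\langle \tfrac{1}{n}\sum_i (z_i^k -\beta y_i^k),\, x^{k+1}\Big\rangle + \tfrac{\beta}{2}\|x^{k+1}\|^2 = -\beta\langle x^{k+1},\bar x^{k+1}\rangle + \tfrac{\beta}{2}\|x^{k+1}\|^2 = -\tfrac{\beta}{2}\|x^{k+1}\|^2,
\]
which is exactly the subtracted last term of $h_\beta$. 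Summing over $i$ and dividing by $n$ reproduces \eqref{eq:hdef} termwise.

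The main (and essentially only) obstacle is recognizing that the dual invariant must hold \emph{globally}---for all agents simultaneously---rather than only for the currently active one. This is precisely what \eqref{eq:y_init}--\eqref{eq:z_init} guarantee at $k=0$, and what the ``otherwise'' branches of \eqref{eq:admm3y'}--\eqref{eq:admm3z'} propagate through the random walk. Once that is in hand, the identity reduces to a one-page algebraic cancellation with no probabilistic content.
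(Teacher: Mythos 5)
Your proposal is correct and follows essentially the same route as the paper: both establish the invariant $z_i^k=\nabla f_i(y_i^k)=\vA_i\tran(\vA_i y_i^k-b_i)$ for all $i$ and $k$ from the initialization and the optimality/multiplier updates, deduce $x^{k+1}=\vT\sy^k+c$, and then eliminate $z_i^k$ and $x^{k+1}$ from the augmented Lagrangian by direct expansion. You merely spell out the final algebraic cancellation in more detail than the paper does.
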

\begin{proof}
        From the optimality condition of \eqref{eq:admm3y'}, we can verify
        \begin{align}
        \vA_{i_k}\tran(\vA_{i_k}y_{i_k}^{k+1} - b_{i_k}) {=}\beta x^{k+1} +{z_{i_k}^k} -\beta y_{i_k}^{k+1} \overset{\rm(a)}{=} z_{i_k}^{k+1},\label{eq:z_y_kge1}
        \end{align}
for $k\ge 1$,
        where (a) follows from \eqref{eq:admm3z'}.
        In Walkman, each pair of $y_i$ and $z_i$ is either updated together, or both not updated.
        Then by applying \eqref{eq:z_init} and \eqref{eq:z_y_kge1}, we get
        \begin{align}
        z_i^{k} &= \vA_i\tran (\vA_iy_i^k - b_i), ~\forall i\in V,~k\ge 0. \label{eq:z_y}
        \end{align}
        % holds true for any $k\geq 0$.
  Substituting \eqref{eq:z_y} into \eqref{eq-x-bar-update} and \eqref{eq:admm3x'} yields $x^{k+1} = \vT \sy^{k}+c, \quad \forall k\ge0.$
%       \begin{align}
%       x^{k+1} &= \vT \sy^{k}+c, \quad \forall k\ge0.\label{eq:x_y}
%       \end{align}
  Eliminating $z_i^k$ and $x^{k+1}$ in  $L_{\beta}(x^{k+1},\sy^k;\sz^k)$  using the above formulas produces \eqref{h=L}.% by computing and  respectively.
%       Hence, by substituting \eqref{eq:z_y} and \eqref{eq:x_y} to \eqref{eq-Lagrangian}, we have
%       \begin{align\star}
%       &L_{\beta}(x^{k+1}, \sy^k, \sz^k) \\
%       =& \frac{1}{n}\sum_{i=1}^{n}\Big(\frac{1}{2}\|\vA_i  y_i^k - b_i\|^2 + \langle\vA_i\tran (\vA_iy_i^k - b_i), \vT \sy^{k}+c - y_i^k\rangle\\
%       &+\frac{\beta}{2}\|\vT \sy^{k}+c - y_i^k\|^2\Big)\\
%       =& \frac{1}{n}\sum_{i=1}^{n}\Big(\frac{1}{2}\|b_i\|^2-\frac{1}{2}\|\vA_i y_i^k\|^2  +\langle\vA_i\tran (\vA_iy_i^k - b_i), \vT \sy^{k}+c \rangle\\
%       &+\frac{\beta}{2}\|\vT\sy^k +c\|^2 -\beta\langle\vT\sy^k +c, y_i^k\rangle + \frac{\beta}{2}\|y_i^k\|^2\Big)\\
%       = & \frac{1}{n}\sum_{i=1}^{n}\Big(\frac{\beta}{2}\|y_i^k\|^2-\frac{1}{2}\|\vA_i y_i^k\|^2 +{\frac{1}{2}}\|b_i\|^2\Big) + \frac{\beta}{2}\|\vT\sy^k +c\|^2\\
%       & -\frac{\beta}{n}\sum_{i=1}^{n}\langle y_i^k - \frac{1}{\beta}\vA_i\tran\vA_i y_i^k +\frac{1}{\beta}\vA_i\tran b_i, \vT \sy^{k}+c\rangle\\
%       =&\frac{1}{n}\sum_{i=1}^{n}\Big(\frac{\beta}{2}\|y_i^k\|^2-\frac{1}{2}\|\vA_i y_i^k\|^2 +{\frac{1}{2}}\|b_i\|^2\Big) + \frac{\beta}{2}\|\vT\sy^k +c\|^2\\
%       & -\beta {\langle\vT \sy^{k}+c , \vT \sy^{k}+c\rangle}\\
%       = & h_{\beta}(\sy^k)
%       \end{align*}
\end{proof}
The following lemma establishes that $h_{\beta}(\sy)$ is strongly convex and Lipschitz differentiable.
\begin{lemma}\label{lm:h_convex}
        For a network with $n\ge 2$ agents, and the parameter $\beta>{\sigma^*_{\max}}$, where ${\sigma^*_{\max}} :=\max_{i} \sigma_{\max}(\vA_i\tran\vA_i)$,  the function $h_{\beta}(\cdot)$ is
        \begin{enumerate}
                \item strongly convex with modulus $\nu=\frac{(n-1)(\beta - {\sigma^*_{\max}})}{n^2},$ and
                \item Lipschitz differentiable with Lipschitz constant $\bar{L}= \frac{\beta}{n}\big(1-\frac{1}{n}\big(1-\frac{\sigma^*_{\max}}{\beta}\big)^2\big).$
        \end{enumerate}
\end{lemma}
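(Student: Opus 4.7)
Since $h_\beta$ is a quadratic function of $\sy$, both parts of the lemma reduce to a spectral estimate on its constant Hessian. The plan is: (i) compute $\vH := \nabla^2 h_\beta$ explicitly; (ii) rewrite the quadratic form $\bv\tran\vH\bv$ in terms of the matrices $\vE_i := \vI - \vA_i\tran\vA_i/\beta$, whose eigenvalues lie in $[1-\sigma^*_{\max}/\beta,\,1]$; and (iii) produce a lower bound $\nu\|\bv\|^2$ and an upper bound $\bar L\|\bv\|^2$ on the quadratic form.

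Differentiating \eqref{eq:hdef} twice in $\sy$ yields the constant matrix
\[
\vH \;=\; \frac{1}{n}\,\mathrm{BlockDiag}\big(\beta\vI-\vA_i\tran\vA_i\big) \;-\; \beta\,\vT\tran\vT.
\]
A blockwise expansion then gives, for any $\bv = \mathrm{col}\{v_1,\ldots,v_n\}\in\RR^{np}$,
\[
\bv\tran\vH\bv \;=\; \frac{\beta}{n}\sum_{i=1}^n v_i\tran\vE_i v_i \;-\; \frac{\beta}{n^2}\Big\|\sum_{i=1}^n \vE_i v_i\Big\|^2.
\]
The algebraic workhorse for step (iii) is the standard identity $n\sum_i\|x_i\|^2 - \|\sum_i x_i\|^2 = \sum_{i<j}\|x_i-x_j\|^2 \ge 0$, applied with $x_i = \vE_i^{1/2}v_i$, combined with the uniform spectral bounds on $\vE_i$. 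For Lipschitz smoothness (part 2), $\vE_i\preceq\vI$ supplies a leading $\beta/n$ contribution, while the lower bound $\vE_i\succeq(1-\sigma^*_{\max}/\beta)\vI$ enforces a definite negative correction from $-(\beta/n^2)\|\sum_i \vE_i v_i\|^2$, producing the claimed $\bar L = \frac{\beta}{n}\big(1-(1-\sigma^*_{\max}/\beta)^2/n\big)$. For strong convexity (part 1), the lower bound $\vE_i\succeq(1-\sigma^*_{\max}/\beta)\vI$ furnishes the $(\beta-\sigma^*_{\max})$ factor, and the $(n-1)/n$ prefactor reflects the fact that $\beta\vT\tran\vT$ has rank at most $p$ and therefore subtracts from only a $p$-dimensional subspace of $\RR^{np}$; on the complementary $(n-1)p$-dimensional kernel of $\vT$, the Hessian coincides with the block-diagonal positive part and inherits its smallest eigenvalue in full.

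The main obstacle will be making this last ``rank-$p$ subtraction'' heuristic quantitative. A direct application of Weyl's inequality yields only the vacuous bound
\[
\lambda_{\min}(\vH) \;\ge\; \frac{\beta-\sigma^*_{\max}}{n} - \lambda_{\max}(\beta\vT\tran\vT) \;\ge\; -\frac{\sigma^*_{\max}}{n},
\]
because it ignores the alignment between $\mathrm{range}(\vT\tran)$ and the block-diagonal eigenstructure of the first term. The sharper route I would follow is to decompose $\bv = \bv_\parallel + \bv_\perp$ into its orthogonal projections onto $\mathrm{range}(\vT\tran)$ (dimension at most $p$) and $\ker\vT$ (dimension at least $(n-1)p$), bound each piece using the block-diagonal structure together with the explicit identity $\lambda_{\max}(\vT\tran\vT) = (1/n^2)\lambda_{\max}(\sum_i \vE_i^2)$, and control the cross term via a Young-type inequality whose coupling parameter is tuned so that the advertised $(n-1)/n^2$ prefactor of $\nu$ emerges cleanly. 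An analogous but simpler computation — keeping track only of the dominant eigendirection of some $\vE_i$ associated with $\sigma^*_{\max}$ — delivers the matching $\bar L$ for part~2.
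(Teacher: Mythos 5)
Your setup is the same as the paper's: both reduce the lemma to two\nobreakdash-sided spectral bounds on the constant Hessian $\vH=\frac{\beta}{n}\vI_{np}-\frac{1}{n}\diag(\vA_1\tran\vA_1,\ldots,\vA_n\tran\vA_n)-\beta\vT\tran\vT$, and your block expansion $\bv\tran\vH\bv=\frac{\beta}{n}\sum_i v_i\tran\vE_i v_i-\frac{\beta}{n^2}\|\sum_i\vE_i v_i\|^2$ is correct (the paper substitutes $u_i=\vD_i w_i$ with $\vD_i=\vE_i^{1/2}$ and writes the form as $\frac{\beta}{n}\su\tran(\vI-\frac1n\vD\vD\tran)\su$). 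But your write-up stops exactly where the work begins: the identity $n\sum_i\|x_i\|^2-\|\sum_i x_i\|^2=\sum_{i<j}\|x_i-x_j\|^2$ with $x_i=\vE_i^{1/2}v_i$ does not match the form you need, because the subtracted term is $\|\sum_i\vE_i^{1/2}x_i\|^2$, not $\|\sum_i x_i\|^2$, and ``uniform spectral bounds on $\vE_i$'' only convert one into the other in the direction that is useless for the lower bound. The decisive estimates for both constants are deferred to a plan ($\bv_{\parallel}+\bv_{\perp}$ decomposition, a Young inequality with a parameter ``tuned so that $(n-1)/n^2$ emerges''), so as a proof this is incomplete.

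The deeper problem is that no tuning can close the plan, because the advertised constants are not attainable. Take $n=2$, $p=1$, $\vA_1\tran\vA_1=0$, $\vA_2\tran\vA_2=\beta/2$ (so $\sum_i\vA_i\tran\vA_i$ is invertible and $\beta>\sigma^*_{\max}$): then $\vH=\beta\bigl(\begin{smallmatrix}1/4&-1/8\\-1/8&3/16\end{smallmatrix}\bigr)$ has $\lambda_{\min}=\frac{7-\sqrt{17}}{32}\beta\approx 0.090\,\beta$, strictly below the claimed $\nu=\beta/8$; worse, for $\vA_i\tran\vA_i=\vI$ and $\beta\to\infty$ the consensus direction $\bv=\mathds{1}\otimes c$ gives Rayleigh quotient $\frac{\beta}{n}(1-\frac1\beta)\frac1\beta\to\frac1n$ while $\nu\to\infty$, so the gap is unbounded. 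Part 2 fails similarly: for $n=3$, $p=1$, $\vA_1=\vA_2=0$, $\vA_3\neq 0$ and $\sw=(1,-1,0)/\sqrt2$ one gets $\sw\tran\vH\sw=\frac{\beta}{3}>\bar L$. You correctly identified the ``rank-$p$ subtraction'' as the obstacle — it is, and it is fatal to the stated modulus, since the near-null direction of $\vI-\frac1n\vD\vD\tran$ is precisely the consensus direction, where the true curvature is governed by $(1-\sigma^*_{\max}/\beta)\,\sigma^*_{\max}/\beta$ (equivalently, by $\sigma_{\min}(\sum_i\vA_i\tran\vA_i)$), not by $\beta-\sigma^*_{\max}$. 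For what it is worth, the paper's own proof breaks at the same point: the inequality $\sw\tran\vH\sw\ge\frac{\beta}{n}(1-\frac1n)\|\diag(\vD)\sw\|^2$ requires $\lambda_{\max}(\vD\vD\tran)=\lambda_{\max}(\sum_i\vD_i^2)\le 1$, whereas that eigenvalue is at least $n(1-\sigma^*_{\max}/\beta)>1$. So the honest verdict is: your approach mirrors the paper's, but neither argument is complete, and the statement itself needs a corrected $\nu$ (and, for part 2, the safe constant is simply $\bar L=\beta/n$).
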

\begin{proof}
        As a quadratic function, $h_{\beta}(\cdot)$ is $\nu$-strongly convex with $\bar{L}$-Lipschitz gradients if, and only if, its Hessian (by \eqref{eq:hdef}) $ \vH$ satisfies
%We discuss the convexity of $h_{\beta}(\cdot)$ by analyzing the Hessian matrix of $h_{\beta}(\cdot)$, $\vH$.
        \begin{align}\label{eq:Hbounds}
        \nu\vI\preceq\vH :=& \frac{\beta}{n}\vI_{np} -\frac{1}{n}\vA - \beta\vT\tran\vT\preceq  \bar{L}\vI,%\\
        %\vA := & \diag\left(\vA_1\tran \vA_1, \vA_2\tran \vA_2, \cdots, \vA_n\tran \vA_n\right)
        %\vR = &\frac{\beta}{n}\diag\left((\vI -\frac{1}{\beta}\vA_1\tran \vA_1), (\vI -\frac{1}{\beta}\vA_2\tran \vA_2), \cdots, (\vI -\frac{1}{\beta}\vA_n\tran \vA_n)\right)
        \end{align}
where $\vA :=  \diag\left(\vA_1\tran \vA_1, \vA_2\tran \vA_2, \cdots, \vA_n\tran \vA_n\right).$
        With $\beta> \max_i \sigma_{\max}(\vA_i\tran \vA_i)$, we define the symmetric positive definite matrices $\vD_i := \left(\vI -\frac{1}{\beta}\vA_i\tran \vA_i\right)^{1/2}$ for $i\in V$. The spectral norm of $\vD_i$ satisfies
        \begin{align}
        \Big(1-\frac{\sigma^*_{\max}}{\beta}\Big)^{\frac{1}{2}}\hspace{-2mm}\le\Big(1 -\frac{\sigma_{\max}(\vA_i\tran\vA_i)}{\beta}\Big)^{\frac{1}{2}}\le \|\vD_i\|\le 1.
        \end{align}
        Stacking $\vD_i$'s into
        \begin{align}
        \vD := \left[\begin{array}{c}\vD_1\\\vdots\\ \vD_n \end{array}\right].
        \end{align}
%$\vD_1, \cdots, \vD_n$:
%       \begin{align}
%       \vD_i = \vI -\frac{1}{\beta}\vA_i\tran \vA_i, ~\forall i \in\{1, \cdots, n\}.\label{eq:def_Di}
%       \end{align}
        Then, for any vector $\sw := {\rm col}\{w_1,\cdots,w_n\} \in \RR^{np}$ where $w_i\in \RR^p$, we have the interval bounds for $\|\diag(\vD)\sw\|$:
        \begin{align}
        \|\diag(\vD)\sw\| =&\left\|\left[\begin{array}{c}\vD_1w_1\\\vdots\\ \vD_n w_n \end{array}\right]\right\|\\
        \in&\Big[ \Big(1-\frac{\sigma^*_{\max}}{\beta}\Big)^{\frac{1}{2}}\|\sw\|, \|\sw\|\Big].
        \end{align}
        It is easy to check
        \begin{align}
          \sw\tran \vH\sw = \hspace{-1mm} \frac{\beta}{n}\hspace{-1mm}\left(\diag(\vD)\sw\right)\tran\hspace{-1mm}\left(\vI - \frac{1}{n}\vD\tran\vD\right)\hspace{-1mm}(\diag(\vD)\sw).
        \end{align}
        Therefore, we get \eqref{eq:Hbounds} from
        \begin{align}
         \sw\tran \vH\sw\geq & \frac{\beta}{n}\big(1-\frac{1}{n}\big)\|\diag(\vD)\sw\|^2 \\
         \geq&\underbrace{\frac{\beta}{n}\big(1-\frac{1}{n}\big)\big(1-\frac{\sigma^*_{\max}}{\beta}\big)}_{\nu}\|\sw\|^2
        \end{align}
        and
        \begin{align}
        \sw\tran \vH\sw\le& \frac{\beta}{n}\Big(\|\diag(\vD)\sw\|^2 -\frac{1}{n}\Big)\\
        \le& \frac{\beta}{n}\Big(\|\sw\|^2 -\frac{1}{n}\big(1-\frac{\sigma^*_{\max}}{\beta}\big)^2\|\sw\|^2\Big)\\
        =& \underbrace{\frac{\beta}{n}\big(1-\frac{1}{n}\big(1-\frac{\sigma^*_{\max}}{\beta}\big)^2\big)}_{\bar{L}}\|\sw\|^2.
        \end{align}
\end{proof}

%\end{definition}

\begin{lemma}\label{lm:h_opt}
With $\beta>{\sigma^*_{\max}}$, the unique minimizer of $h_{\beta}(\cdot)$ is $\sy^\star := {\rm col}\{y_1^\star, \cdots, y_n^\star\}$ with
$y_i^\star \equiv x^\star=(\sum_{i=1}^n\vA_i\tran\vA_i)^{-1}(\sum_{i=1}^n \vA_i\tran b_i)$. These components are also the unique solution to
\eqref{ls-equi}, as well as the unique minimizer of $\sum_{i=1}^{n}\frac{1}{2}\|\vA_i x - b_i\|^2$.
% and
%\begin{align}
%\sy^\star := {\rm col}\{y_1^\star, \cdots, y_n^\star\} \in \RR^{np},\quad  \mbox{where} \quad y_i^\star = x^\star,
%\end{align}
% $x^\star = (\sum_{i=1}^n\vA_i\tran\vA_i)^{-1}(\sum_{i=1}^n \vA_i\tran b_i)$
%and  is the minimizer of the consensus optimization problem \eqref{eq:prob_ls}.
\end{lemma}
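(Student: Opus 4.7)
The plan is to extract the minimizer of $h_\beta$ by first-order optimality. Since Lemma \ref{lm:h_convex} already guarantees that $h_\beta$ is strongly convex when $n\ge 2$ and $\beta>\sigma^*_{\max}$, a unique minimizer exists and is the unique solution of $\nabla h_\beta(\sy)=0$. So the whole task reduces to solving this stationarity equation and identifying its solution with $\vone\otimes x^\star$.

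First I would compute the partial gradient. Writing $\vE_i:=\vI-\frac{1}{\beta}\vA_i\tran\vA_i$, so that the $i$-th block of $\vT$ is $\frac{1}{n}\vE_i$, differentiating \eqref{eq:hdef} term-by-term gives
$$\nabla_{y_i} h_\beta(\sy)=\frac{1}{n}\bigl(\beta\vI-\vA_i\tran\vA_i\bigr)y_i-\beta\cdot\frac{1}{n}\vE_i(\vT\sy+c)=\frac{\beta}{n}\vE_i\bigl(y_i-\vT\sy-c\bigr).$$
The crucial point is that under the hypothesis $\beta>\sigma^*_{\max}$, each $\vE_i$ is symmetric positive definite, hence invertible. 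Therefore $\nabla_{y_i}h_\beta(\sy)=0$ for all $i$ forces $y_i=\vT\sy+c$ for every $i$, which in particular means that all the $y_i$'s coincide; denote their common value by $x^\star$.

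Next I would plug $\sy=\vone\otimes x^\star$ back into the consensus relation $y_i=\vT\sy+c$:
$$x^\star=\frac{1}{n}\sum_{i=1}^n\vE_i\, x^\star+c=x^\star-\frac{1}{n\beta}\sum_{i=1}^n\vA_i\tran\vA_i\, x^\star+\frac{1}{n\beta}\sum_{i=1}^n\vA_i\tran b_i,$$
which telescopes to the normal equation $\bigl(\sum_{i=1}^n\vA_i\tran\vA_i\bigr)x^\star=\sum_{i=1}^n\vA_i\tran b_i$. Since $\sum_i\vA_i\tran\vA_i$ is assumed invertible, $x^\star=(\sum_i\vA_i\tran\vA_i)^{-1}(\sum_i\vA_i\tran b_i)$. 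This is also exactly the optimality condition for the strongly convex quadratic $\sum_i\frac{1}{2}\|\vA_i x-b_i\|^2$, and because the consensus constraint in \eqref{ls-equi} forces $y_1=\cdots=y_n=x$, the replicated vector $\vone\otimes x^\star$ is the unique solution to \eqref{ls-equi} as well.

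There is no genuinely hard step here; the main thing to be careful about is the block-gradient bookkeeping that shows the quadratic cross-term $-\frac{\beta}{2}\|\vT\sy+c\|^2$ cancels against the $-\frac{1}{n}\cdot\frac{1}{2}\|\vA_i y_i\|^2$ contribution precisely enough to leave a gradient proportional to $\vE_i$. Once that factorization is visible, the invertibility of $\vE_i$ does all the work of forcing consensus, and the normal equation drops out by a one-line substitution.
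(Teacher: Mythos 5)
Your proposal is correct and follows essentially the same route as the paper: compute $\nabla_{y_i}h_\beta(\sy)=\frac{\beta}{n}\bigl(\vI-\frac{1}{\beta}\vA_i\tran\vA_i\bigr)(y_i-\vT\sy-c)$, use positive definiteness of $\vI-\frac{1}{\beta}\vA_i\tran\vA_i$ to force $y_i=\vT\sy+c$ for all $i$ (hence consensus), and substitute back to recover the normal equations. You merely spell out the final substitution and the identification with the solution of \eqref{ls-equi} that the paper leaves as ``easy to verify.''
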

\begin{proof}
%With optimality condition of $h(\sy)$, we have for all $i\in\{1, \cdots, n\}$ that
Since $\sy^*$ must satisfy $\nabla h_\beta(\sy^*)=0$, we have
\begin{align}
\nabla_{i} h_\beta(\sy^*)=&\frac{\beta}{n}(y_i^\star \hspace{-1mm}-\hspace{-1mm} \frac{1}{\beta}\vA_i\tran\vA_i y_i^\star )\hspace{-1mm}-\hspace{-1mm} \frac{\beta}{n}(\vI - \frac{1}{\beta}\vA_i\tran\vA_i)(\vT\sy^\star +c) \nonumber \\
 =& \frac{\beta}{n}(\vI - \frac{1}{\beta}\vA_i\tran\vA_i)(y_i^\star - \vT\sy^\star - c)=0.
\end{align}
Since $\vI - \frac{1}{\beta}\vA_i\tran\vA_i\succ 0$ with  $\beta>{\sigma^*_{\max}}$, we conclude
\begin{align}
y_i^\star - \vT\sy^\star - c =0, ~\forall i =1,\dots,n,
\end{align}
which implies $y^\star$ given in the Lemma. It is easy to verify the rest of the Lemma using optimality conditions.
%that
%\begin{align}
%y_1^\star = \cdots=y_n^\star = (\sum_{i=1}^n \vA_i\tran\vA_i)^{-1}(\sum_{i=1}^n \vA_i\tran b_i) = x^\star.
%\end{align}
\end{proof}
Define \textbf{one epoch} as $\uptau(\delta)$ iterations, and let
\begin{align}
        h_\beta^\star:= \Min_{\ssy}\{h_{\beta}(\sy)\},\quad F_t :=\EE h_{\beta}(\sy^{t\uptau(\delta)}) -  h_{\beta}^\star, \label{xcn3s0d}
\end{align}
{\color{black} where we use $t$ to index an epoch.}
The next lemma is  fundamental to the remaining analysis.%inequality that helps facilitate the final linear convergence.

\begin{lemma}\label{lm:main_ineq}
        Under Assumption \ref{ass-markov} and {$\beta>2{\sigma^*_{\max}}+2$}, for any $\delta >0$, we have
        \begin{align}
        F_t^2\leq \frac{ 2\beta^2\uptau(\delta)}{n(1-\delta)\pi_*}\left(F_t - F_{t+1}\right)\cdot \EE\|\sy^{t\uptau(\delta)} - \sy^\star\|^2, \label{eq:main_ineq}
        \end{align}
        where $\uptau(\delta)$ is defined in \eqref{eq:def_J}.
\end{lemma}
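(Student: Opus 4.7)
The plan is to establish the inequality in three layers that convert properties of $h_\beta$ and the random walk into the stated bound. Throughout, write $k_0 := t\uptau(\delta)$.

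Step one (convexity $+$ Cauchy-Schwarz). Since $h_\beta$ is convex by Lemma \ref{lm:h_convex},
\begin{align*}
 h_\beta(\sy^{k_0})-h_\beta^\star \le \langle \nabla h_\beta(\sy^{k_0}),\,\sy^{k_0}-\sy^\star\rangle \le \|\nabla h_\beta(\sy^{k_0})\|\cdot\|\sy^{k_0}-\sy^\star\|.
\end{align*}
Taking expectation and applying Cauchy-Schwarz to the expectation,
\begin{align*}
 F_t^2 \le \EE\|\nabla h_\beta(\sy^{k_0})\|^2 \cdot \EE\|\sy^{k_0}-\sy^\star\|^2.
\end{align*}
It therefore suffices to prove $\EE\|\nabla h_\beta(\sy^{k_0})\|^2 \le \frac{2\beta^2\uptau(\delta)}{n(1-\delta)\pi_*}(F_t-F_{t+1})$.

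Step two (per-step descent). Because of initialization \eqref{eq:y_init}--\eqref{eq:z_init}, the identity $x^{k+1}=\vT\sy^k+c$ (used to prove \eqref{h=L}) holds for every $k$. Plugging this into the closed-form prox for the quadratic $f_{i_k}$ gives $\Delta_{i_k}:=y_{i_k}^{k+1}-y_{i_k}^k=-M_{i_k}\nabla_{i_k}h_\beta(\sy^k)$, where $M_{i_k}$ is a polynomial in $\vA_{i_k}\tran\vA_{i_k}$ whose spectrum is bounded above and below uniformly by constants depending only on $\beta$ and $\sigma^*_{\max}$. Since $h_\beta$ is quadratic and only its $i_k$-th block changes,
\begin{align*}
 h_\beta(\sy^{k+1})-h_\beta(\sy^k)=\langle \nabla_{i_k}h_\beta(\sy^k),\Delta_{i_k}\rangle+\tfrac12\Delta_{i_k}\tran H_{i_k i_k}\Delta_{i_k},
\end{align*}
with $H_{i_k i_k}$ the corresponding diagonal block of the Hessian in \eqref{eq:Hbounds}. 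Substituting the closed form for $\Delta_{i_k}$ and using $\beta>2\sigma^*_{\max}+2$, this right-hand side is bounded above by $-c_0\|\nabla_{i_k}h_\beta(\sy^k)\|^2$ for an explicit constant $c_0$ of order $n/(2\beta^2)$.

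Step three (mixing over one epoch). Telescoping the per-step descent over $k\in[k_0,k_0+\uptau(\delta))$ and taking expectation,
\begin{align*}
 F_t-F_{t+1}\ge c_0\sum_{k=k_0}^{k_0+\uptau(\delta)-1}\EE\|\nabla_{i_k}h_\beta(\sy^k)\|^2.
\end{align*}
To connect the right-hand side to $\EE\|\nabla h_\beta(\sy^{k_0})\|^2$, I would condition on $\chi^{k_0-\uptau(\delta)}$, so that for every $k$ in the epoch the mixing inequality \eqref{eq:mix_time_prop} yields $\Pr(i_k=j\mid\chi^{k_0-\uptau(\delta)})\ge \pi_j-\delta\pi_*\ge(1-\delta)\pi_j$, and hence $\EE[\|\nabla_{i_k}h_\beta(\sy^{k_0})\|^2\mid\chi^{k_0-\uptau(\delta)}]\ge(1-\delta)\pi_*\|\nabla h_\beta(\sy^{k_0})\|^2$. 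To replace $\sy^{k_0}$ inside the norm by $\sy^k$ (which is what the descent actually controls) I would use the Lipschitz gradient of $h_\beta$ (Lemma \ref{lm:h_convex}) together with the Jensen bound $\|\sy^k-\sy^{k_0}\|^2\le\uptau(\delta)\sum_{j=k_0}^{k-1}\|\Delta_{i_j}\|^2$ and absorb the drift into the same telescoping sum. The factor $\uptau(\delta)$ in the final constant comes from this epoch length and the Jensen step, while $(1-\delta)\pi_*$ comes directly from mixing; combining with step one delivers \eqref{eq:main_ineq}.

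The main obstacle is the decoupling inside step three: $\sy^k$ depends on the random indices $i_{k_0},\dots,i_{k-1}$, which are themselves correlated with $i_k$ through the Markov kernel $\vP$, so the mixing bound cannot be applied pointwise to $\|\nabla_{i_k}h_\beta(\sy^k)\|^2$. The fix is to condition $\uptau(\delta)$ steps \emph{before} the epoch starts, so that $i_k$ becomes $(1-\delta)$-close to $\pi$-distributed while the drift $\sy^k-\sy^{k_0}$ is separately controlled by the already-earned descent.
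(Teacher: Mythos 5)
Your Steps one and two are sound and essentially match the paper: the convexity-plus-Cauchy--Schwarz reduction is identical to the paper's \eqref{eq:h_convex}--\eqref{eq:Eh_convex}, and your per-step descent $h_\beta(\sy^k)-h_\beta(\sy^{k+1})\ge c_0\|\nabla_{i_k}h_\beta(\sy^k)\|^2$ with $c_0\sim n/\beta^2$ follows (by a slightly different route) from combining the paper's \eqref{xnwesd} with \eqref{xjwens9}. The gap is in Step three. You write $\EE[\|\nabla_{i_k}h_\beta(\sy^{k_0})\|^2\mid\chi^{k_0-\uptau(\delta)}]\ge(1-\delta)\pi_*\|\nabla h_\beta(\sy^{k_0})\|^2$, but $\sy^{k_0}$ is \emph{not} measurable with respect to $\chi^{k_0-\uptau(\delta)}$: it depends on $i_{k_0-\uptau(\delta)},\dots,i_{k_0-1}$, which are themselves correlated with $i_k$ through the chain. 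So you cannot pull $\|\nabla_j h_\beta(\sy^{k_0})\|^2$ out of the conditional expectation, and the factorization implicit in your ``hence'' requires a conditional independence of $i_k$ and $\sy^{k_0}$ that fails. Concretely, $\sy^{k_0}$ typically reveals which agent was active at step $k_0-1$ (its block was just updated), and then $i_{k_0}$ is supported only on that agent's neighbors -- nowhere near $(1-\delta)\pi$-spread on a sparse graph. Your closing paragraph correctly names this obstacle, but the proposed fix (conditioning $\uptau(\delta)$ steps \emph{before} the epoch) only makes the \emph{marginal} of $i_k$ near-stationary; it does nothing to decouple $i_k$ from the random iterate sitting inside the norm, and in fact makes that iterate random under the conditioning.

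The paper's resolution inverts the alignment: it evaluates the gradient at the iterate that \emph{is} measurable under the conditioning, namely $\nabla_{i_k}h_\beta(\sy^{k-\uptau(\delta)+1})$ conditioned on $\chi^{k-\uptau(\delta)+1}$, where the only remaining randomness is in $i_k$, which sits exactly $\uptau(\delta)$ Markov steps after the known index $i_{k-\uptau(\delta)}$; the mixing bound \eqref{eq:mix_time_prop} then applies cleanly to give the factor $(1-\delta)\pi_*$ in \eqref{eq:cond_exp}. The mismatch between $\nabla_{i_k}h_\beta(\sy^{k-\uptau(\delta)+1})$ and $\nabla_{i_k}h_\beta(\sy^k)$ is handled deterministically, as in your drift estimate, by $\|a\|^2\le 2\|a-b\|^2+2\|b\|^2$ and a Jensen/telescoping bound over the window (this is where the extra factor $\uptau(\delta)$ enters), after which the whole sum of squared increments telescopes into $F_t-F_{t+1}$ via the per-step descent. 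Finally, the paper only needs this at the single index $k=(t+1)\uptau(\delta)-1$, for which $\sy^{k-\uptau(\delta)+1}=\sy^{t\uptau(\delta)}$, rather than summing over the epoch as you propose. If you rewire your Step three to condition at $\chi^{k_0}$ and bound $\EE[\|\nabla_{i_{k_0+\uptau(\delta)-1}}h_\beta(\sy^{k_0})\|^2\mid\chi^{k_0}]$, the rest of your argument goes through.
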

\begin{proof}
%       To begin with, we study the
We first upper bound $\|\nabla h_{\beta}(\sy^k)\|^2$. Verify%For any $k\geq 0$ we have
        \begin{align}\label{eq:gradihb}
        \nabla_{i} h_{\beta}(\sy^k) = %&\frac{\beta}{n}\vD_iy_i^k - \frac{\beta}{n}\vD_i(\vT\sy^k +c)\nonumber \\
        %=&
\frac{\beta}{n}\vD_i^2\left(y_i^k - \vT\sy^k - c\right).
        \end{align}
        %According to the update procedure as in
Investigate step \eqref{eq:admm3y'} for $i=i_k$ as
        \begin{align}
        y_i^{k+1}  =& \argmin_{y} \frac{1}{2}\|\vA_iy - b_i\|^2+\frac{\beta}{2}\|y -x^{k+1} -\frac{1}{\beta}z_i^k\|^2 \nonumber \\
        =& (\vA_i\tran \vA_i +\beta\vI)^{-1}\left(\vA_i\tran b_i +\beta x^{k+1}+z_i^k\right) \nonumber \\
        {\overset{\rm (a)}{=}} &  (\vA_i\tran \vA_i +\beta\vI)^{-1}\left(\beta\vT\sy^k +\beta c +\vA_i\tran\vA_iy_i^k\right) \nonumber \\
        = & y_i^k + (\vI + \frac{1}{\beta}\vA_i\tran \vA_i)^{-1}\left(\vT\sy^k + c - y_i^k\right)\nonumber \\
        \overset{\rm \eqref{eq:gradihb}}{=} & y_i^k - \frac{n}{\beta}(\vI + \frac{1}{\beta}\vA_i\tran \vA_i)^{-1}\vD_i^{-2}\left(\nabla_i h_{\beta}(\sy^k) \right),\label{eq:y_i_update}
        \end{align}
        where (a) follows from \eqref{eq:z_y} and $\vT$'s definition. %\eqref{eq:x_y}.
        Thence, %By \eqref{eq:y_i_update} we have
        \begin{align}
        \|\nabla_{i_k} h_{\beta}(\sy^k) \| =& \frac{\beta}{n}\Big\|(\vI - \frac{1}{\beta^2}(\vA_{i_k}\tran\vA_{i_k})^2)(y_{i_k}^{k+1} - y_{i_k}^k)\Big\| \nonumber\\
        \leq& \frac{\beta}{n}\|\sy^{k+1} - \sy^k\|,\label{xnwesd}
        \end{align}
        For any $k\geq\uptau(\delta)-1$, we further have %{\color{red} we will add the definition of mixing time $\uptau(\delta)$ latter}
        \begin{align}
        &\|\nabla_{i_k} h_{\beta}(\sy^{k-\uptau(\delta)+1})\|^2 \nonumber \\
        =& \|\nabla_{i_k} h_{\beta}(\sy^{k-\uptau(\delta)+1}) - \nabla_{i_k} h_{\beta}(\sy^{k}) +\nabla_{i_k} h_{\beta}(\sy^{k})\|^2 \nonumber \\
        \leq& 2\|\nabla_{i_k} h_{\beta}(\sy^{k-\uptau(\delta)+1}) - \nabla_{i_k} h_{\beta}(\sy^{k})\|^2 + 2\|\nabla_{i_k} h_{\beta}(\sy^{k})\|^2 \nonumber \\
        \overset{\eqref{xnwesd}}{\leq}& \frac{2\beta^2\uptau'}{n^2} \hspace{-3mm}\hspace{-1mm} \sum_{d = k-\uptau(\delta)+1}^{k-1}\hspace{-1mm}\|\sy^{d+1} \hspace{-1mm}-\hspace{-1mm} \sy^d\|^2 \hspace{-1mm}+\hspace{-1mm} \frac{2\beta^2}{n^2}\|\sy^{k+1} \hspace{-1mm}-\hspace{-1mm} \sy^k\|^2 \nonumber \\
        \leq& \max\Big\{\frac{2\beta^2\uptau'}{n^2}, \frac{2\beta^2}{n^2}\Big\}\sum_{d = k-\uptau(\delta)+1}^{k}\|\sy^{d+1} \hspace{-1mm}-\hspace{-1mm} \sy^d\|^2 \nonumber \\
        \leq& \frac{2\beta^2\uptau(\delta)}{n^2}\sum_{d = k-\uptau(\delta)+1}^{k}\|\sy^{d+1} - \sy^d\|^2,\label{eq:nabla_ik}
        \end{align}
        where $\uptau'= \uptau(\delta)-1$ and the last inequality holds because $\beta > {\sigma^*_{\max}}$. With the filtration
%       \begin{align}
$
                \cX^k = %{\rm Filtration}
\sigma\{\sy^0,\cdots,\sy^k, i_0,\cdots, i_{k-1}\},
$
%       \end{align}
        \begin{align}
        &\hspace{-1cm}\EE\left(\|\nabla_{i_k} h_{\beta}(\sy^{k-\uptau(\delta)+1})\|^2|\chi^{k-\uptau(\delta)+1}\right) \nonumber \\
        &=\EE\left(\|\nabla_{i_k} h_{\beta}(\sy^{k-\uptau(\delta)+1})\|^2|\sy^{k-\uptau(\delta)+1}, i_{k-\uptau(\delta)}\right) \nonumber \\
        &= \sum_{j=1}^N [\vP^{\uptau(\delta)}]_{i_{k-\uptau(\delta)}, j}\|\nabla_{j} h_{\beta}(\sy^{k-\uptau(\delta)+1})\|^2 \nonumber \\
        &\overset{\eqref{eq:mix_time_prop}}{\ge} (1-\delta)\pi_*\|\nabla h_{\beta}(\sy^{k-\uptau(\delta)+1})\|^2.\label{eq:cond_exp}
        \end{align}
        Reverting the sides of \eqref{eq:cond_exp} and taking expectation over $\cX^{k-\uptau(\delta)+1}$, followed by applying \eqref{eq:nabla_ik}, we have for $k \ge \uptau(\delta)-1$
        \begin{align}
        &\EE\|\nabla h_{\beta}(\sy^{k-\uptau(\delta)+1})\|^2 \nonumber \\
        &\leq \frac{ 2\beta^2\uptau(\delta)}{n^2(1-\delta)\pi_*}\hspace{-1.5mm}\sum_{d = k-\uptau(\delta)+1}^{k}\EE\left(\|\sy^{d+1} - \sy^d\|^2\right). \label{eq:nabla_func}
        \end{align}
        Notice that
        \begin{align}\label{xjwens9}
                &\ h_\beta(\sy^k) - h_\beta(\sy^{k+1}) \nonumber \\
                \overset{\eqref{h=L}}{=}&\ L_\beta(x^{k+1},\sy^k;\sz^k) - L_\beta(x^{k+2},\sy^{k+1};\sz^{k+1})\nonumber \\
                =&\ L_\beta(x^{k+1},\sy^k;\sz^k) \hspace{-1mm}-\hspace{-1mm} L_{\beta}^{k+1} \hspace{-1mm}+\hspace{-1mm} L_{\beta}^{k+1} \hspace{-1mm}-\hspace{-1mm}  L_\beta(x^{k+2},\sy^{k+1};\sz^{k+1}) \nonumber \\
                \ge&\ \frac{1}{n}\|\sy^{k} - \sy^{k+1}\|^2,
        \end{align}
        where the last line follows from parts 1 and 2 of Lemma \ref{lm:1}. Combining \eqref{xjwens9} and \eqref{eq:nabla_func}, we get
        \begin{align}\hspace{-2mm}
                &\EE\|\nabla h_{\beta}(\sy^{k-\uptau(\delta)+1})\|^2 \nonumber \\
                &\leq  \hspace{-1mm}  \frac{ 2\beta^2\uptau(\delta)}{n(1-\delta)\pi_*}\, \EE\left(h_\beta(\sy^{k-\uptau(\delta)+1}) \hspace{-0.5mm}-\hspace{-0.5mm} h_\beta(\sy^{k+1})\right).\label{eq:gradient_bound}
        \end{align}
        Now with $k=(t+1)\uptau(\delta)-1$, % for any $t\ge 0$, then
  \eqref{eq:gradient_bound} reduces to
        \begin{align}
        %&\hspace{-0.8cm}
        &\EE\|\nabla h_{\beta}(\sy^{t\uptau(\delta)})\|^2
        \nonumber \\
        %\leq &\
        &\le\frac{ 2\beta^2\uptau(\delta)}{n(1-\delta)\pi_*}\, \EE\left(h_\beta(\sy^{t\uptau(\delta)}) \hspace{-0.5mm}-\hspace{-0.5mm} h_\beta(\sy^{(t+1)\uptau(\delta)})\right) \nonumber \\
        &\overset{\eqref{xcn3s0d}}{=}\  \frac{ 2\beta^2\uptau(\delta)}{n(1-\delta)\pi_*}\, (F_t - F_{t+1})  \label{eq:gradient_bound-2}
        \end{align}
        %{\color{red} the definition of $F_t$: to be added}
        %\item
        %We will derive the main inequality in this part.
        By the convexity of $h_{\beta}(\cdot)$,
        \begin{align}
        \hspace{-2mm}\EE h_{\beta}(\sy^{t\uptau(\delta)}) \hspace{-1mm}-\hspace{-1mm} h_\beta^\star \leq \EE\langle\nabla h_{\beta}(\sy^{t\uptau(\delta)}), \sy^{t\uptau(\delta)} \hspace{-1mm}-\hspace{-1mm} \sy^\star\rangle. \label{eq:h_convex}
        \end{align}
        Since both sides of \eqref{eq:h_convex} are nonnegative, we square them and use the Cauchy-Schwarz inequality to get
        \begin{align}
        F_t^2\leq \EE\|\nabla h_{\beta}(\sy^{t\uptau(\delta)})\|^2\cdot \EE\|\sy^{t\uptau(\delta)} - \sy^\star\|^2. \label{eq:Eh_convex}
        \end{align}
        Substituting \eqref{eq:gradient_bound} into \eqref{eq:Eh_convex} completes the proof.
\end{proof}

Now we are ready to establish the linear convergence rate of the sequence $(F_t)_{t\geq 0}$.
\begin{theorem}\label{thm:linear_convg}
        Under Assumption \ref{ass-markov}, for $\beta>2{\sigma^*_{\max}}+2$, we have linear convergence (with $\nu$ given in Lemma \ref{lm:h_convex}):
        \begin{align}\label{xc;es}
                F_{t+1} \le
                \left( 1+ \frac{n(1-\delta)\pi_*\nu}{{ 4\beta^2\uptau(\delta)}}\right)^{-1}
                %\left(\frac{8\beta^2 \uptau(\delta)}{8\beta^2 \uptau(\delta) + \nu} \right)
                F_t, \quad \forall\ t\geq 0.
        \end{align}
%       \begin{align}
%       \EE h_{\beta}(\sy^k) - \Min h_{\beta}\leq h_{\beta}(\sy^{0})\cdot\eta^{\lfloor \frac{k}{\uptau(\delta)}\rfloor}
%       \end{align}
%       where $\eta = \frac{C}{C+\nu}.$
\end{theorem}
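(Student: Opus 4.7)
The plan is to combine Lemma~\ref{lm:main_ineq} with the $\nu$-strong convexity of $h_\beta$ established in Lemma~\ref{lm:h_convex}. Since Lemma~\ref{lm:h_opt} identifies $\sy^\star$ as the unique unconstrained minimizer of $h_\beta$, strong convexity at the minimizer gives the quadratic growth bound $h_\beta(\sy) - h_\beta^\star \ge \tfrac{\nu}{2}\|\sy - \sy^\star\|^2$ for every $\sy$. Evaluating at $\sy = \sy^{t\uptau(\delta)}$ and taking expectations yields $\EE\|\sy^{t\uptau(\delta)} - \sy^\star\|^2 \le \tfrac{2}{\nu}\, F_t$.

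Next I would insert this estimate into the conclusion \eqref{eq:main_ineq} of Lemma~\ref{lm:main_ineq}, obtaining
\begin{equation*}
F_t^2 \le \frac{4\beta^2\uptau(\delta)}{n(1-\delta)\pi_*\nu}\,(F_t - F_{t+1})\,F_t.
\end{equation*}
The degenerate case $F_t = 0$ is handled at once: $F_t$ is nonincreasing in $t$ by the per-iteration descent \eqref{xjwens9} together with the identity \eqref{h=L}, so $F_t = 0$ forces $F_{t+1} = 0$ and the claimed inequality is trivial. Assuming $F_t > 0$, I divide through by $F_t$ and set $C := \tfrac{4\beta^2\uptau(\delta)}{n(1-\delta)\pi_*\nu}$ to get $F_t \le C(F_t - F_{t+1})$, which rearranges to $F_{t+1} \le \tfrac{C-1}{C}\,F_t$. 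A one-line check $(C-1)(C+1) \le C^2$ shows $\tfrac{C-1}{C} \le \tfrac{C}{C+1} = (1 + 1/C)^{-1}$, which is exactly the bound claimed in the theorem.

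The main obstacle here is essentially nonexistent because the heavy lifting is already encapsulated in Lemma~\ref{lm:main_ineq}, whose proof packages the gradient representation \eqref{eq:gradihb}, the mixing-time inequality \eqref{eq:mix_time_prop}, and the Lagrangian descent from Lemma~\ref{lm:1}. Small points to verify along the way are that the hypothesis $\beta > 2\sigma^*_{\max}+2$ is simultaneously compatible with Lemmas~\ref{lm:main_ineq}, \ref{lm:h_convex} and \ref{lm:h_opt} (the latter two only need $\beta > \sigma^*_{\max}$), and that the contraction constant satisfies $C \ge 1$, which follows automatically from $F_t \le C(F_t - F_{t+1}) \le C F_t$ together with $F_{t+1} \ge 0$ and $F_t > 0$. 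Iterating the resulting per-epoch contraction gives the desired linear rate in $t$.
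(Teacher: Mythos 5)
Your proposal is correct and follows essentially the same route as the paper: strong convexity of $h_\beta$ gives $\EE\|\sy^{t\uptau(\delta)}-\sy^\star\|^2\le \tfrac{2}{\nu}F_t$, this is substituted into Lemma~\ref{lm:main_ineq}, and monotonicity of $F_t$ (from \eqref{xjwens9} and \eqref{h=L}) handles the division by $F_t$. The only cosmetic difference is in the last algebraic step: you cancel $F_t$ directly to get $F_{t+1}\le \tfrac{C-1}{C}F_t$ (a marginally stronger contraction) and then relax to $(1+1/C)^{-1}$, whereas the paper first writes $F_tF_{t+1}\le F_t^2$ and cancels to land on \eqref{xc;es} immediately; both are valid.
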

%\item
\begin{proof}
        By the strong convexity of $h_{\beta}(\cdot)$ and $\sy^\star= \arg\Min h_{\beta}(\sy)$, it holds for any $\sy\in \RR^{np}$ that,
        \begin{align}\label{xn38}
        \frac{\nu}{2}\|\sy - \sy^\star\|^2 \le h_{\beta}(\sy) - h_{\beta}(\sy^\star).
        \end{align}
        Hence,
        \begin{align}
        \EE\|\sy^{t\uptau(\delta)} - \sy^\star\|^2\leq \frac{2(\EE h_{\beta}(\sy^{t\uptau(\delta)}) - h_{\beta}^\star) }{\nu} = \frac{2 F_t}{\nu}.\label{eq:to_optimal_bound}
        \end{align}
        Substituting \eqref{eq:to_optimal_bound} into \eqref{eq:main_ineq}, we have%arrive at \eqref{xc;es} by
        \begin{align}\label{xcn239sdu}
        F_t^2\leq \frac{C}{\nu}\left(F_t - F_{t+1}\right)F_t,\quad \mbox{where} \quad C = \frac{ 4\beta^2\uptau(\delta)}{n(1-\delta)\pi_*}.
        \end{align}
        By \eqref{xjwens9}, the sequence $\{h_\beta(\sy^k)\}$ is non-increasing, implying $0 \le F_{t+1} \le F_t$. This together with \eqref{xcn239sdu} yields
        \begin{align}
        F_tF_{t+1}\leq& \frac{C}{\nu}\left(F_t - F_{t+1}\right)F_t,
        %F_{t} = &O([\frac{C}{\nu+C}]^t)
        \end{align}
        which is equivalent to \eqref{xc;es}.
\end{proof}

Theorem \ref{thm:linear_convg} states that Walkman for decentralized least squares converges linearly by epoch (every $\uptau(\delta)$ iterations).

\section{Communication Analysis}\label{sc:comm}
This section derives and compares communication complexities with some state-of-the-art methods to solve problem \eqref{admm-comm} {\color{black} in solving two different types of problems: the decentralized least squares problem and  the general nonconvex nonsmooth problem}. % 
{\color{black} In the following analysis, communication of $p$-dimensional variables between a pair of agents is taken as $1$ unit of communication, while $\nu$ and $L$ are taken as constants independent from network scale $n$.}

\subsection{Solving Least Squares Problem}\label{sec:comm_ls}
First, we establish the communication complexity of Walkman. From \eqref{xn38} and \eqref{xcn3s0d}, we have
\begin{align}
        \EE \|\sy^{t\uptau(\delta)} - \sy^\star\|^2 \le \frac{2}{\nu} F_t \overset{\rm \eqref{xc;es}}{\le} \left(\frac{2}{\nu}\right)
        %\left( \frac{8\beta^2 \uptau(\delta)}{8\beta^2 \uptau(\delta) + \nu} \right)^t
        \left( 1+ \frac{n(1-\delta)\pi_*\nu}{{ 4\beta^2\uptau(\delta)}}\right)^{-t} F_0.
\end{align}
%where the last inequality holds because of \eqref{xc;es}. We define
%\begin{align}
To achieve   mean-square deviation  $G_t := \EE \|\sy^{t\uptau(\delta)} - \sy^\star\|^2 \le \epsilon$,
%\end{align}
%as the mean-square deviation of $\sy^{t\uptau(\delta)}$ at epoch $t$. To guarantee $G_t \le \epsilon$,
it is enough to have
\begin{align}
        \left(\frac{2F_0}{\nu}\right)
        %\left( \frac{8\beta^2 \uptau(\delta)}{8\beta^2 \uptau(\delta) + \nu} \right)^t
        \left( 1+ \frac{n(1-\delta)\pi_*\nu}{{ 4\beta^2\uptau(\delta)}}\right)^{-t} \le \epsilon,
\end{align}
which is implied by
\begin{align}\label{xnwesdf}
        t = \ln\left(\frac{2F_0}{\nu \epsilon}\right)\Big/\ln\left(1 + \frac{n(1-\delta)\pi_*\nu}{{ 4\beta^2\uptau(\delta)}}\right).
\end{align}
%Recall the definition of $\nu$ in Lemma \ref{lm:h_convex} and that $\beta > {\sigma^*_{\max}}$, we can tell that
Since $\beta$ can be regarded as constants that are independent of network size $n$, and $\nu$ is $O(\frac{1}{n})$, %. Therefore,
we can write: %\eqref{xnwesdf} with big-O notation as
\begin{align}
        t \sim O\Big( \ln\big( \frac{n}{\epsilon} \big)/\ln \big( 1 + \frac{(1-\delta)\pi_*}{\uptau(\delta)}\big) \Big)
\end{align}
For each epoch $t$, there are $\uptau(\delta)$ iterations, which use $O(\uptau(\delta))$ communication. Hence, to guarantee $G_t\le \epsilon$, the total communication complexity is
\begin{align}
        O\bigg( \Big( \ln\big( \frac{n}{\epsilon} \big)/\ln \big( 1 +  \frac{(1-\delta)\pi_*}{\uptau(\delta)}\big) \Big) \cdot \uptau(\delta) \bigg) \label{wadmm-comm}
\end{align}
Recall the definition of $\uptau(\delta)$ in \eqref{eq:def_J}, by setting $\delta$ as $1/2$, the communication complexity is
\begin{align}
        O\bigg( \Big( \underbrace{\ln\big( \frac{n}{\epsilon} \big)/\ln \big( 1 +  \frac{(1-\sigma(\vP))\pi_*}{2\ln\frac{2}{ \pi_*}}\big)}_{\mbox{epoch number}}\Big) \cdot \underbrace{\frac{1}{1-\sigma(\vP)}\ln\frac{2}{\pi_*}}_{\mbox{comm. per epoch}}\bigg), \label{wadmm-comm}
\end{align}
where we remember  $\sigma(\vP) := \sup_{f\in\RR^n:f\tran \vone = 0}\| f\tran \vP\|/\|f\|.$

For simplicity of expression and comparison, in the succeeding parts we assume that the Markov chain is reversible with $\vP = \vP\tran$ and
it admits a uniform stationary distribution $\pi^T = \pi^T \vP$ with:
$$\pi=[1/n, \cdots, 1/n]\tran \in \RR^n,$$ which implies $\pi_*=\Min_i\pi_i=1/n.$
%We also set $\delta=\frac{1}{2n}$ and
%Inequality \eqref{eq:mix_time_prop} is guaranteed to hold for
%%\footnote{Our Markov chain is reversible. From inequality (12.13) in \cite{levin2017markov}, we have $|[P^{\uptau(\delta)}]_{ij} - 1/n| \le (\lambda_2(\vP))^{\uptau(\delta)}$. By letting $(\lambda_2(\vP))^{\uptau(\delta)} \le \frac{1}{2n}$ which equals to \eqref{eq:def_J}, we can guarantee \eqref{eq:mix_time_prop}.}
%%\begin{definition}\label{def:mixing_time}
%%For an irreducible and aperiodic Markov chain with transition matrix $\vP\in\RR^{n\times n},$ we define a fixed integer $\uptau(\delta)$ as
%%\begin{align}
%%\uptau(\delta) := \lceil \frac{3}{2}\ln\left(n\right)\cdot \frac{1}{1-\lambda_2(\vP)}\rceil,\label{eq:def_J}
%%\end{align}
%\begin{align}
%\uptau(\delta) := \lceil \ln\left(2n\right) \big/ \ln(1/\lambda_2(\vP)) \rceil,\label{eq:def_J}
%\end{align}
 With $\vP$ being a symmetric real matrix, we also have $\sigma(\vP)=\lambda_2(\vP)=\max\{|\lambda_i(\vP)|:\lambda_i(\vP)\not=1\}$.
%{\color{black} from \eqref{eq:def_J} recall $\uptau(\delta)= \lceil \ln\left(2n\right) \big/ \ln(1/\lambda_2(\vP)) \rceil$. }
%Substituting $\uptau(\delta)$ with $\ln(n) \big/ (1 - \lambda_2(\vP))$ in \eqref{wadmm-comm}, and using the first-order approximation $\ln(1/\lambda_2(\vP)) \approx 1 - \lambda_2(\vP)$, accurate for $\lambda_2(\vP)$ close to $1$,
 We get the total communication complexity of Walkman as
%Furthermore, based on the definition of $\uptau(\delta)$ in \eqref{eq:def_J}, we know that
%\begin{align}
%       t = O\left( \ln\left(\frac{1}{\epsilon}\right) \Big/ \ln\left(1 + \frac{1-\lambda_2(\vP)}{\ln(n)}\right) \right).
%\end{align}
%Note that $t$ is the epoch index. For each epoch $t$, there are at least $\uptau(\delta)$ iterations. For each iteration there is $O(1)$ communications. As a result, to guarantee $G_t\le \epsilon$, the total communication load of Walkman is
\begin{align}
\hspace{-2mm}
O\bigg( \underbrace{\Big(\ln\big(\frac{n}{\epsilon}\big) \Big/ \ln\big(1 + \frac{1-\lambda_2(\vP)}{2n\ln(2n)}\big)\Big)}_{\mbox{epoch numbers}}\hspace{-0.5mm} \cdot  \hspace{-2.5mm}\underbrace{\Big(\frac{\ln(n)}{1-\lambda_2(\vP)}\Big)}_{\mbox{comm. per epoch}}\hspace{-2mm} \bigg)\label{walk-admm-comm}
\end{align}
%{\color{black} For better understanding, we reformulate the communication load as
%\begin{align}
%O\left( {\ln\left(\frac{1}{\epsilon}\right)} \cdot  {\left[\frac{\ln(n)}{1-\lambda_2(\vP)}\right]^2}\right)\label{walk-admm-comm1}
%\end{align}
%}

\subsubsection{Communication comparisons}\label{sec:comm_comp_ls}
%To compare the communication efficiency with existing algorithms,
For comparison, we list the communication complexities of some existing algorithms.
{\color{black}
%Firstly, we study the communication complexity of ESDACD \cite{hendrikx2018accelerated}, which is an accelerated generalization to randomized gossip method, which is originally designed to solve average consensus problem. When applying ESDACD, the agents should be synchronized to keep track of which iteration the network is going through. And in each iteration, only one edge in the network is activated to communicate bi-directionally. 
%Based on Theorem 1 and  following the derivation of Section B2 of \cite{hendrikx2018accelerated}, with algorithmic parameter $\mu_{i,j} =  1$ and all the edges uniformly selected with probability $1/m$\footnote{\color{black}Nonuniform selection of edges is not practical in real applications. Since each agent should generate the random selected edge in each iteration with the same seed, nonuniform selection of edges implies each agent should cache the diverse sampling probabilities for each edge.}, the communication complexity of ESDACD to achieve the deviation $G_t\le \epsilon$ is 
%\begin{align}\label{esdacd-comm}
%O\Big(\ln(\frac{1}{\epsilon})\cdot \frac{m}{\sqrt{d_{\min}(1-\lambda_2(\vP))}}\Big),
%\end{align}
%where $d_{\min}$ denotes the smallest degree among $d_1,\cdots, d_n$.

Firstly, we study the communication complexity of ESDACD \cite{hendrikx2018accelerated}, which is an accelerated generalization to the randomized gossip method originally designed to solve the average consensus problem. When applying ESDACD, the agents should be synchronized to keep track of which iteration the network is going through. And in each iteration, only one edge in the network is activated to communicate bi-directionally. 
Based on Theorem 1 and  following the derivation of Section B2 of \cite{hendrikx2018accelerated}, with algorithmic parameter $\mu_{i,j} =  1$ and all the edges uniformly selected with probability $1/m$\footnote{Nonuniform selection of edges is not practical in real applications. Since each agent should generate the randomly selected edge in each iteration with the same seed, nonuniform selection of edges implies each agent should cache the diverse sampling probabilities for each edge.}, the communication complexity of ESDACD to achieve the deviation $G_t\le \epsilon$ is 
\begin{align}\label{esdacd-comm}
O\Big(\ln\left(\frac{1}{\epsilon}\right)\cdot \frac{m}{\sqrt{d_{\min}(1-\lambda_2(\vP))}}\Big),
\end{align}
where $d_{\min}$ denotes the smallest degree among $d_1,\cdots, d_n$.

As for RW-ADMM\cite{shah2018linearly}, in each iteration, it evokes the communications between the activated agent and all its neighbors, and thus consumes $d_{\rm ave}$ communications per iteration  where $d_{\rm ave}=\frac{1}{n}\sum_{i=1}^{n}d_i$ is the averaged degree in the network. This implies that RW-ADMM requires more communications than Walkman per iteration. To calculate the communication complexity of RW-ADMM, we consider a simple $d$-regular graph in which $m = nd/2$. We also assume the state transition matrix $\vP$ is symmetric and doubly stochastic so that $\pi_{\max}=\pi_{\min}=1/n$. In addition, we assume the current agent will activate one of its $d$ neighbors with a uniform probability $p = 1/d$ and thus it holds that $p_{\max}=p_{\min}=1/d$. Under these conditions, the communication complexity for RW-ADMM is verified as 
\begin{align}\label{rw-admm-communication}
O\Big(\ln\left(\frac{1}{\epsilon}\right)\cdot \frac{m d}{\sqrt{(1-\lambda_2(\vP))}}\Big).
\end{align}
Next, we  consider gossip based methods. 
}
D-ADMM\cite{shi2014linear} has: % the communication complexity:
\begin{align}
\hspace{-2.2mm}
O\bigg( \Big( \underbrace{\ln\big(\frac{1}{\epsilon}\big) \Big/ \ln\big(1 + \sqrt{1 -\lambda_2(\vP)}\big)}_{\mbox{iteration numbers}} \Big) \cdot  \hspace{-2.5mm}\underbrace{m}_{\mbox{comm. per iter.}} \hspace{-1mm}\bigg) \label{admm-comm}
\end{align}
where $m$ is the number of edges. %In general, $m$ ranges from order $O(n)$ (the tree graph) to $O(n^2)$ (the complete graph).
%{\color{black} Comparing with the communication load of Walkman, we have }
The communication complexity of EXTRA \cite{shi2015extra} is
\begin{align} O\bigg( {\Big(\ln\big(\frac{1}{\epsilon}\big) \Big/ \ln\big(2 - \lambda_2(\vP)\big)\Big)} \cdot  m \bigg) \label{extra-comm}
\end{align}
As to exact diffusion\cite{yuan2017exact1}, the communication complexity is
\begin{align} O\bigg( {\Big(\ln\big(\frac{1}{\epsilon}\big) \Big/ \ln\big(1 + \frac{1 - \lambda_2(\vP)}{\lambda_2(\vP) + C}\big)\Big)} \cdot  m \bigg) \label{ed-comm}
\end{align}
where $C$ only depends on the condition number of the objective function, independent of $\lambda_2(\vP)$ and $n$.
%{\color{black} which $\nu$? $\nu$ depends on $n$}

{\color{black}
%The above communication complexities can be further simplified. Note that
Considering the case $\epsilon\le 1/{\rm e}$, it holds $\ln( {n}/{\epsilon})\leq \ln n\cdot \ln(1/\epsilon)$.
Since %$\ln(1+x) \leq x$ and
$\ln(1+x) \approx x$ for $x$ close to $0$, Walkman in \eqref{walk-admm-comm} can be simplified to:
\begin{align}
        O\bigg( {\ln\big(\frac{1}{\epsilon}\big)} \cdot  {\frac{n\ln^3(n)}{(1-\lambda_2(\vP))^2}}\bigg). \label{walk-admm-comm1}
\end{align}
We similarly simplify the communication complexities in \eqref{admm-comm}, \eqref{extra-comm}, and \eqref{ed-comm}. They are listed in Table I in \S\ref{sec-intro-contribution}. 
{\color{black} Clearly, ESDACD has a better communication complexity than all the compared methods but may still be worse than Walkman.}

{\color{black} 
 Walkman is more communication efficient than ESDACD when
\begin{align}\label{eq:comm_efficient_cond1}
      {\frac{n\ln^3(n)}{(1-\lambda_2(\vP))^2}} \le \frac{m}{(d_{\min}(1-\lambda_2(\vP)))^{1/2}}.
\end{align}
With $d_{\min}\le m/n$, a sufficient condition for \eqref{eq:comm_efficient_cond1} is
%equivalent to %$\lambda_2(\vP)$ needs to satisfy
\begin{align}\label{condition-walk-ADMM}
        \lambda_2(\vP) \le 1 - \frac{n^{1/3}[\ln(n)]^{2}}{m^{1/3}} \approx 1 - \big(\frac{n}{m}\big)^{1/3},
\end{align}
where the approximation holds for $\ln(n)\ll n$ and with $\ln(n)$ ignored. % and $\ln(n)$ can be regarded as a constant that is independent of $n$.
%Therefore, we conclude that when $\lambda_2(\vP)$ satisfies
Condition \eqref{condition-walk-ADMM} indicates the network has moderately good connectivity. When this holds, Walkman exhibits superior communication efficiency than every compared algorithm.}}

\subsubsection{Communication for different graphs}\label{sec:graph_exmp_ls}

Let us consider three classes of graphs for concrete communication complexities. %efficiency of all the above mentioned algorithms.

\noindent \textbf{Example 1 (Complete graph)} In a complete graph, every agent connects with all the other nodes. The number of edges $m = O(n^2)$ and $\lambda_2(\vP) = 0$, $d_{\min}=n$. Consequently, the communication complexity of Walkman is $O(\ln(1/\epsilon)n\ln^3(n))$ while {\color{black} that of ESDACD is $O(\ln(1/\epsilon)n^{3/2})$}, and those of the other algorithms are $O(\ln(1/\epsilon)n^2)$. Noticing  $\ln^3(n)\ll n^{1/2}$,Walkman is %It is observed that Walkman is
more communication efficient. % for such well-connected network.
%\rightline \qd

\noindent \textbf{Example 2 (Random graph)} Consider the random graphs by Edgar Gilbert \cite{gilbert1959random}, $G(n, p)$, in which an $n$-node graph is generated with each edge populating independently with probability $p\in(0,1)$.
Let $\vA\in\RR^{n\times n}$ denote the adjacency matrix of the generated graph, with $A_{i,j}=1$ if nodes $i$ and $j$ are connected, and  $0$ otherwise.
The $(i,j)$-th $(i\neq j)$, entry of the  transition probability matrix $\vP$ is $P_{i,j} = \frac{A_{i,j}}{{d_{\max}}}$, where $d_{\max} = \max_{i} \sum_{j=1}^n A_{i,j}$ is the maximal degree of all the $n$ nodes. For any $i$, the diagonal entry is $P_{i,i} = 1 -\frac{\sum_{j\neq i}A_{i,j}}{{d_{\max}}}$. 
%It can be easily checked that in such graph,
It can be shown
$\EE [m] = \frac{p(n^2-n)}{2}= O(n^2)$.
By union bound and Bernstein's inequality, one can easily derive $d_{\max}$ concentrates around $(n-1)p$. 
Further by Theorem 1 of \cite{chung2011spectra}, $1 - \lambda_2(\vP)$ concentrates around $1 - \frac{\lambda_{2}(\bar{\vA})}{(n-1)p}$, where \begin{align}
\bar{A}_{i,j}=\begin{cases}p & i\neq j\\ 0 & i= j.\end{cases}
\end{align}
Since $\bar{\vA}$ is a Toeplitz matrix, one can verify that $\lambda_{2}(\bar{\vA}) = p$, that is,
\begin{align}
1 - \lambda_2(\vP) = O\left(1\right).
\end{align}
%\end{align}
With such setting, Walkman a communication complexity of roughly $O(\ln(1/\epsilon)n\ln^3 n )$ while {\color{black} that of ESDACD is $O(\ln(1/\epsilon)n^{3/2})$}, and the other algorithms have $O(\ln(1/\epsilon)n^2 )$. %It is observed that
Hence, Walkman is more communication-efficient when $n$ is sufficiently large. % for this type of random graphs. \qd

\noindent \textbf{Example 3 (Cycle graph)} Consider a cycle, where each agent connects with its previous and next neighbors. One can verify that
\begin{align}
        1 - \lambda_2(\vP) =  O\left( 1 - \cos(2\pi/n)\right) = O\left(1/n^2\right),
\end{align}
and $m = O(n)$. Hence, Walkman has a communication complexity of roughly $O( \ln(1/\epsilon)n^5\ln^3 n)$ while, in \eqref{admm-comm} {\color{black} and \eqref{esdacd-comm}, D-ADMM and ESDACD have $O(n^2 \ln(1/\epsilon))$}, and in \eqref{ed-comm}
--\eqref{ed-comm}, EXTRA and exact diffusion have $O(n^3 \ln(1/\epsilon))$, so Walkman is less  communication-efficient.

%Both of these algorithms are designed to solve nonconvex smooth consensus optimization problems, i.e., a special case of the studied problem \eqref{eq:mainprob} with the nonsmooth term $r=0$. 
%
%To guarantee $E_t\le \epsilon$ for the nonconvex smooth problem, D-GPDA consumes
%\begin{align}
%\end{align}
%amount of communications. 
%
%
%\subsubsection{Communication for different graphs}
%Complete graph, random graph :  $O\Big(\frac{\ln^2 n}{\epsilon}\Big)$.
%Cycle graph: $O\Big(n^4\frac{\ln^2 n}{\epsilon}\Big)$.
%
%For Hong's GPDA, xFILTER:
%Complete graph: $O\Big(\frac{n^2}{\epsilon}\Big)$.
%Cycle graph: $O\Big(\frac{n^2}{\epsilon}\Big)$.
%
%To be noted, GPDA and xFILTER are not capable of solving nonsmooth problem.

{\color{black}\subsection{Solving General Nonconvex Nonsmooth Problems}
According to Theorem \ref{thm:convg_rate_nonconvex}, we first derive the communication complexity of Walkman. 
To achieve the ergodic gradient deviation $E_t:= \min_{k\le t} \EE\|g^k\|^2\le \epsilon$ for any $t>\uptau(\delta)+2$, it is sufficient to have
\begin{align}\label{eq:comm_ncvx_1}
\frac{\bar{C}}{t}\left(L_{\beta}^0 -\underline{f}\right)\le \epsilon.
\end{align}
Taking $L_{\beta}^0$ and $\underline{f}$ as constant independent from $n$ and the network structure, one has
\begin{align}
t\sim O\Big(\frac{1}{\epsilon}\cdot \frac{\uptau(\delta)^2 + 1}{(1-\delta)n\pi_*}\Big)
\end{align}

Recall the definition of $\uptau(\delta)$ in \eqref{eq:def_J}, by setting $\delta$ as $1/2$, the communication complexity is
\begin{align}
O\Big(\frac{1}{\epsilon}\cdot \frac{\ln^2\big(\frac{1}{\pi_*}\big)}{n\pi_*(1-\sigma(\vP))^2}\Big).
\end{align}

We consider a reversible Markov chain with $\vP\tran=\vP$ embedded on an undirected graph, and have the communication complexity of Walkman is 
\begin{align}
O\Big(\frac{1}{\epsilon}\cdot \frac{\ln^2n}{(1-\lambda_2(\vP))^2}\Big).
\end{align}

\subsubsection{Communication comparisons on different graphs}
Next, we compare the communication complexity of Walkman with existing algorithms, D-GPDA  \cite{sun2018distributed}  and xFILTER  \cite{sun2018distributed}  on two specific types of graph structures.
On a complete graph, the communication complexity of Walkman is $O\Big(\frac{\ln^2 n}{\epsilon}\Big)$, whereas, according to  \cite{sun2018distributed} ,  the better communication complexity between D-GPDA and xFILTER is $O\Big(\frac{n^2}{\epsilon}\Big)$.
Next, we consider the cycle graph, which is sparsely connected. 
Walkman consumes $O\Big(n^4\frac{\ln^2 n}{\epsilon}\Big)$ amount of communication on it, whereas  the better communication complexity between D-GPDA and xFILTER is $O\Big(\frac{n^2}{\epsilon}\Big)$. 
Hence, we can draw a similar conclusion as in Section \ref{sec:comm_ls}, that is, Walkman  is more communication efficient  on a more densely connected graph. 
}

\section{Numerical Experiments}\label{sc:num}
In this section, we compare Walkman with existing state-of-the-art decentralized methods through numerical experiments. Consider a network of $50$ nodes that are randomly placed in a $30\times 30$ square. Any two nodes within a distance of $15$ are connected; others are not. We set the probability transition matrix $\vP$ as $[\vP]_{ij} = 1/d_i$.
{\color{black}Algorithmic parameters in the following experiments are set as follows. For the random-walk (RW) incremental algorithm, we have used both a fixed step-size of 0.001 and a sequence of decaying step-sizes $\Min\{0.01, 5/k\}$.  For other algorithms, we have hand-optimized their parameters by grid-search}. 
%1/\max\{|\cN_i|, |\cN_j|\}$ and $[\vP]_{ii} = 1 - \sum_{j\neq i}[\vP]_{ij}$.

{\color{black}\subsection{Decentralized least squares}}
The first experiment uses least squares in  \eqref{ls-equi} with $\vA_i \in \RR^{5\times 10}$, $x\in \RR^{10}$ and $b_i\in \RR^5$. Each entry in $\vA_i$ is generated from the standard Gaussian distribution, and $b_i := \vA_i x_0 + v_i$, where $x_0 \sim \cN(0, I_{10})$ and $v_i \sim \cN(0, 0.1\times I_5)$. Fig. \ref{fig:least_squares} compares different algorithms. {\color{black} In this experiment, the comparison methods include the randomized-gossip type method (ESDACD), those with dense communications (D-ADMM, EXTRA, exact diffusion), DIGing over time-varying graph, RW Incremental method and RW-ADMM with mixed communication pattern. To be noted, we implement all these methods in the synchronous fashion, i.e., an iteration would not start before a priori iteration completes. As for a method over time-varying graph, DIGing is conducted with merely one edge uniformly randomly chosen in each time instance. For ESDACD, the activated edge in each iteration is also drawn independently from a uniform distribution.} %For the random-walk (RW) incremental algorithm, we have used both a fixed step-size of 0.001 and a sequence of decaying step-sizes $\Min\{0.01, 5/k\}$. {\color{black} For other algorithms, we have hand-optimized their parameters by grid-search}. % so that they reach their fastest exact convergence.

In the left plot of Fig. \ref{fig:least_squares}, we count one communication for each transmission of a $p$-length vector ($p=10$ is the dimension of $x$). It is observed that {\color{black} Walkman with \eqref{eq:admm3y'} is much more communication efficient than the other algorithms, while Walkman with \eqref{eq:admmy-linearized} is comparable to ESDACD and DIGing}. In the right plot of Fig. \ref{fig:least_squares}, we illustrate the running times of these methods.

While a running time should in general include the times of computing, communication, and other overheads, we only include communication time and allows simultaneous communication over multiple edges for non-incremental algorithms. However, we assume each communication follows an i.i.d. exponential distribution with parameter $1$. Each iteration of D-ADMM, EXTRA, and exact diffusion waits for the completion of the slowest communication (out of $2m$ communications), which determines the communication time of that iteration. In contrast, {\color{black} ESDACD, DIGing}, random-walk incremental algorithms and Walkman only use one communication per iteration. {\color{black} The communication time per iteration of RW-ADMM is in between, as it waits for the slowest communication in the neighborhood to complete.} Under our setting, Walkman takes longer to converge than D-ADMM, EXTRA, and exact diffusion. {\color{black} It is observed that Walkman with \eqref{eq:admm3y'} outperforms RW-ADMM and ESDACD in both communication cost and running time. In addition, D-ADMM is also observed more efficient than RW-ADMM, which is consistent with the communication complexity we derived in \eqref{rw-admm-communication}.}

\begin{figure}[h]
        \centering
        \includegraphics[width= 0.5\textwidth]{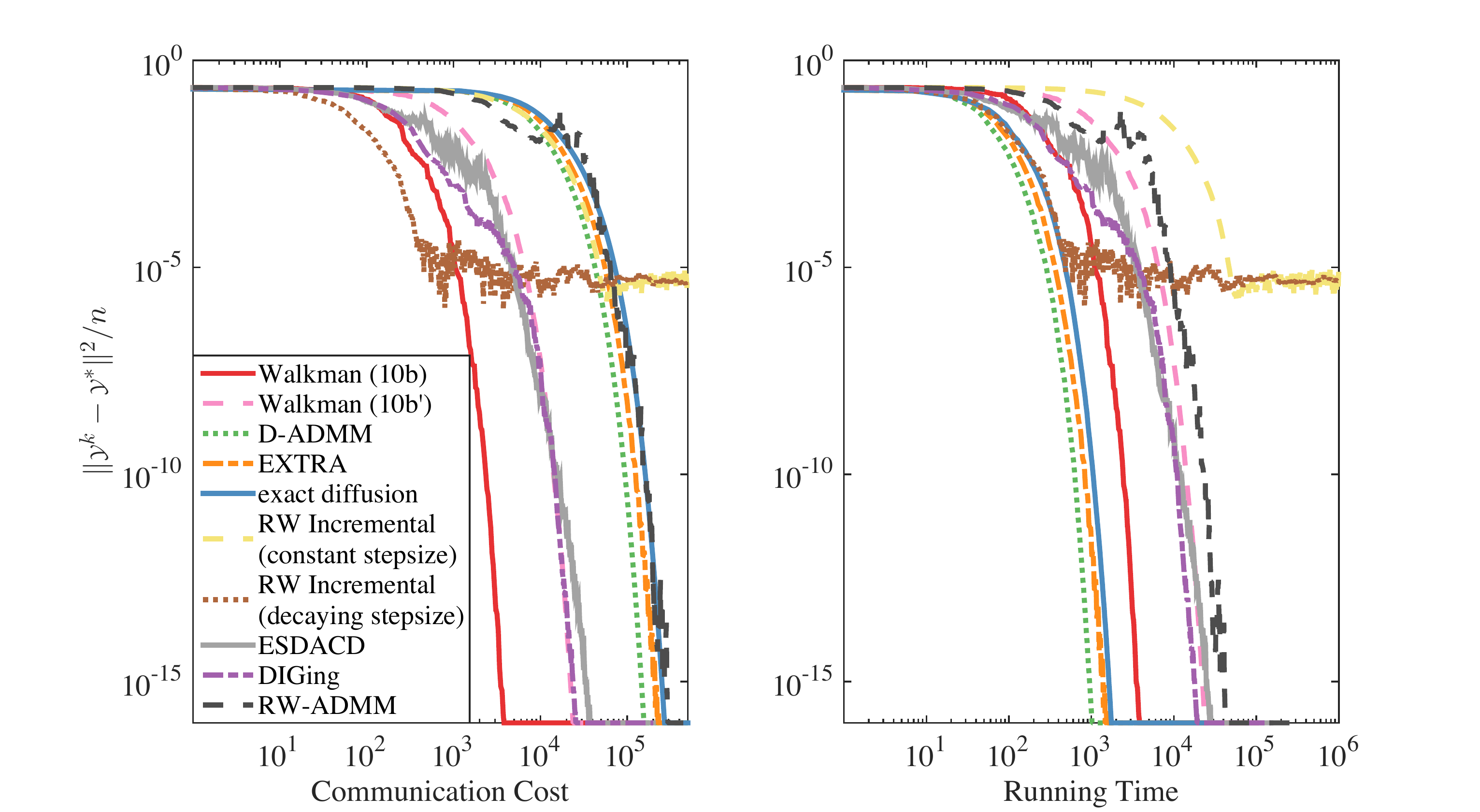}
        \caption{\footnotesize Performance of decentralized algorithms on least squares.}
        \label{fig:least_squares}
\end{figure}

{\color{black}\subsection{Decentralized sparse logistic regression}}
The second experiment solves the logistic regression problem
\begin{equation}\label{eq:logreg_sparse}
\Min_{x\in\RR^p} ~\lambda \|x\|_1\hspace{-1mm}+\hspace{-1mm}\frac{1}{nb}\sum_{i=1}^n\sum_{j=1}^b \log\left(1\hspace{-0.5mm}+\hspace{-0.5mm}\exp(-y_{ij}v_{ij}\tran x)\right) ,
\end{equation}
where $y_{ij}\in\{-1, 1\}$ denotes the label of the $j$th sample kept by the $i$th agent, and $v_{ij}\in \RR^p$ represents its feature vector, and there are $b$ samples kept by each agent. In this experiment, we set $b = 10, p =5$. Each sample feature $v_{ij}\sim\cN(0, 1)$.
To generate $y_{ij}$, we first generate a random vector $x^0\in\RR^5\sim\cN(0, I)$.
Then we generate a uniformly distributed variable $z_{ij}\sim\cU(0,1)$, and if $z_{ij}\leq 1/[1+\exp(-v_{ij}\tran x^0)]$, $y_{ij}$ is taken as $1$; otherwise $y_{ij}$ is set as $-1$. We run the simulation over the same network as the above least-square problem.
Due to the nonsmooth term in \eqref{eq:logreg_sparse}, EXTRA and exact diffusion is not applicable in this problem.
Instead, we compare Walkman with PG-EXTRA \cite{shi2015proximal}, {\color{black} D-ADMM} and random walk proximal gradient method, which conducts one-step proximal gradient operation when an agent receives the variable $x$. 

The communication efficiency of Walkman is also observed in Fig. \ref{fig:logistic-regression}.

\begin{figure}[h]
        \centering
        \includegraphics[width= 0.5\textwidth]{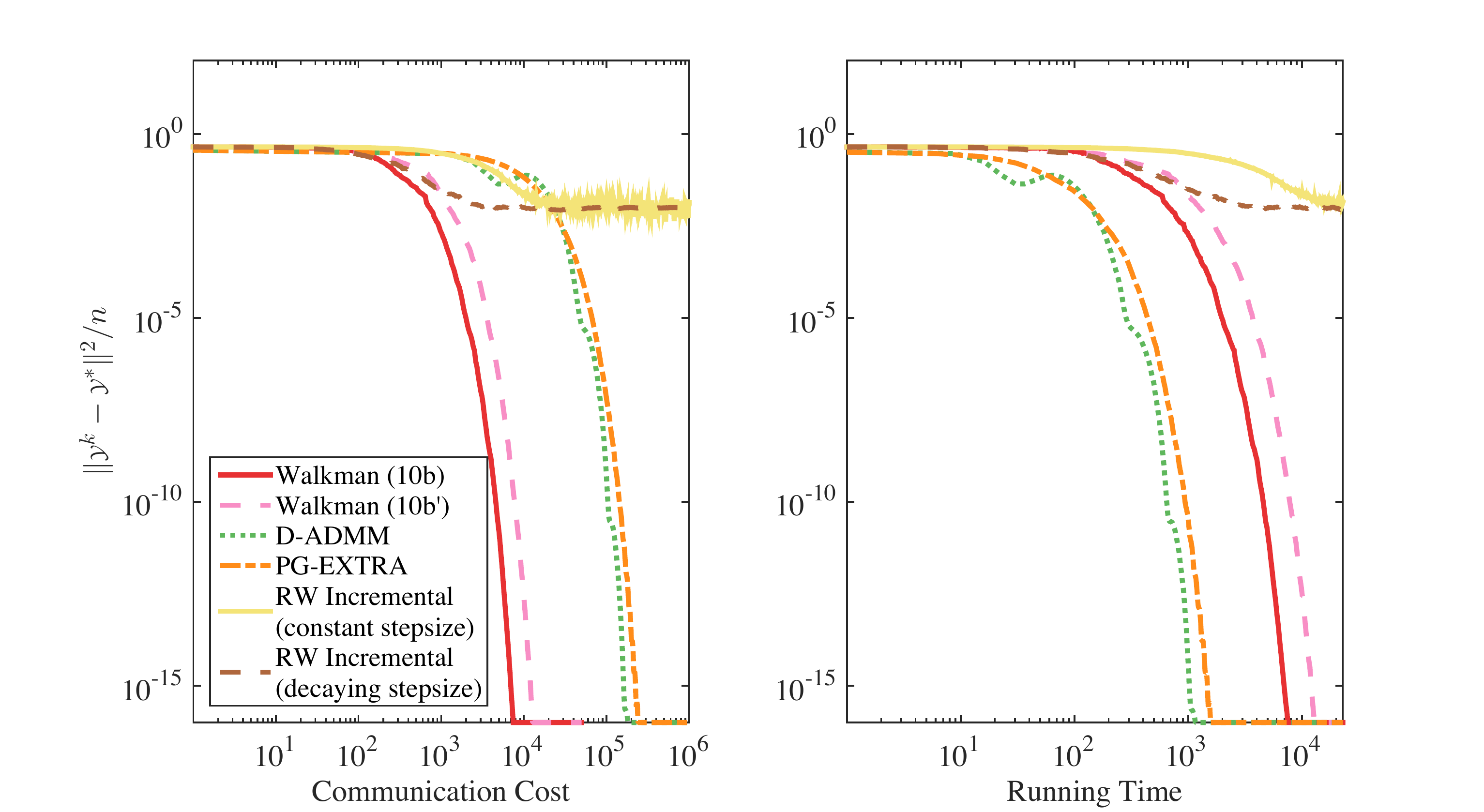}
        \caption{\footnotesize Performance of decentralized algorithms on logistic regression.}
        \label{fig:logistic-regression}
\end{figure}

{\color{black}\subsection{Decentralized non-negative principal component analysis}}
To test the performance on solving nonconvex, nonsmooth problem, the third experiment solves the Non-Negative Principal Component Analysis (NN-PCA) problem
\begin{align}\label{eqnn_pca}
\Min_{x\in\RR^p} ~&\frac{1}{n}\sum_{i=1}^n -x\tran \big(\frac{1}{b}\sum_{j=1}^b y_{ij} y_{ij}\tran\big) x ,\\
\St ~&\|x\|\le 1, x_i\ge 0, \forall i \in \{1,\cdots, p\}.\nonumber
\end{align}
where $y_{ij}\in\RR^p$ denotes the $j$-th sample kept by the $i$-th agent, and there are $b$ samples kept by each agent.
The objective function of \eqref{eqnn_pca}, named as $f$, forms the smooth part of standard optimization problem \eqref{eq:mainprob}, and ${\bf 1}_C$, the indicator function of the feasible space forms the nonsmooth part $r$.
In this experiment, we utilize the training set of the MNIST \cite{mnist} dataset to form the samples, and set $b=1000$.
Each agent only keeps samples with a same label.
% two different criteria to measure the optimality of the solution: 1) $f(x^k)- f(x^*)$, where $x^*$ is achieved by solving problem \eqref{eqnn_pca} by centralized proximal gradient with $3\times 10^4$ iterations; 2) 
{\color{black} Noticing that the NN-PCA problem is nonconvex, we use optimality gap to measure the distance between the algorithmic variables to problem's saddle points}, which is defined as
$$
\|\text{proj}_{\partial_{r}(x^k)}(-\nabla f(x^k))+\nabla f(x^k)\|^2 +\|sy^k-\mathds{1}\otimes x^k\|^2,
$$
where the first term measures how close is $\partial_r(x^k)+\nabla f(x^k)$ to $0$, and the second term measures the consensus violation of the copies kept by agents.
For PG-EXTRA and D-ADMM, since there is only $\sy^k$, we take $x^k$ as the mean of $\{y_1^k, \cdots, y_n^k\}.$
For RW Incremental methods, since there is only $x^k$, the second term of optimality gap is naturally $0$.
We run the simulation over the same network as the above two problems.
Under either  optimality criterion, the communication efficiency of Walkman is also observed in Fig. \ref{fig:nnpca}.
%To be noted, although the optimality gap of both Walkman and PG-EXTRA diminishes to $0$, their function values show different converging pattern. As shown in the lower two subplots, the function value of PG-EXTRA decreases first and then gets stuck at around $0.2006$, whereas thanks to the randomness in $(i_k)_{k\ge 0}$, the function value of Walkman decreases until below $1\times 10^{-15}$.

\begin{figure}[h]
        \centering
        \includegraphics[width= 0.5\textwidth]{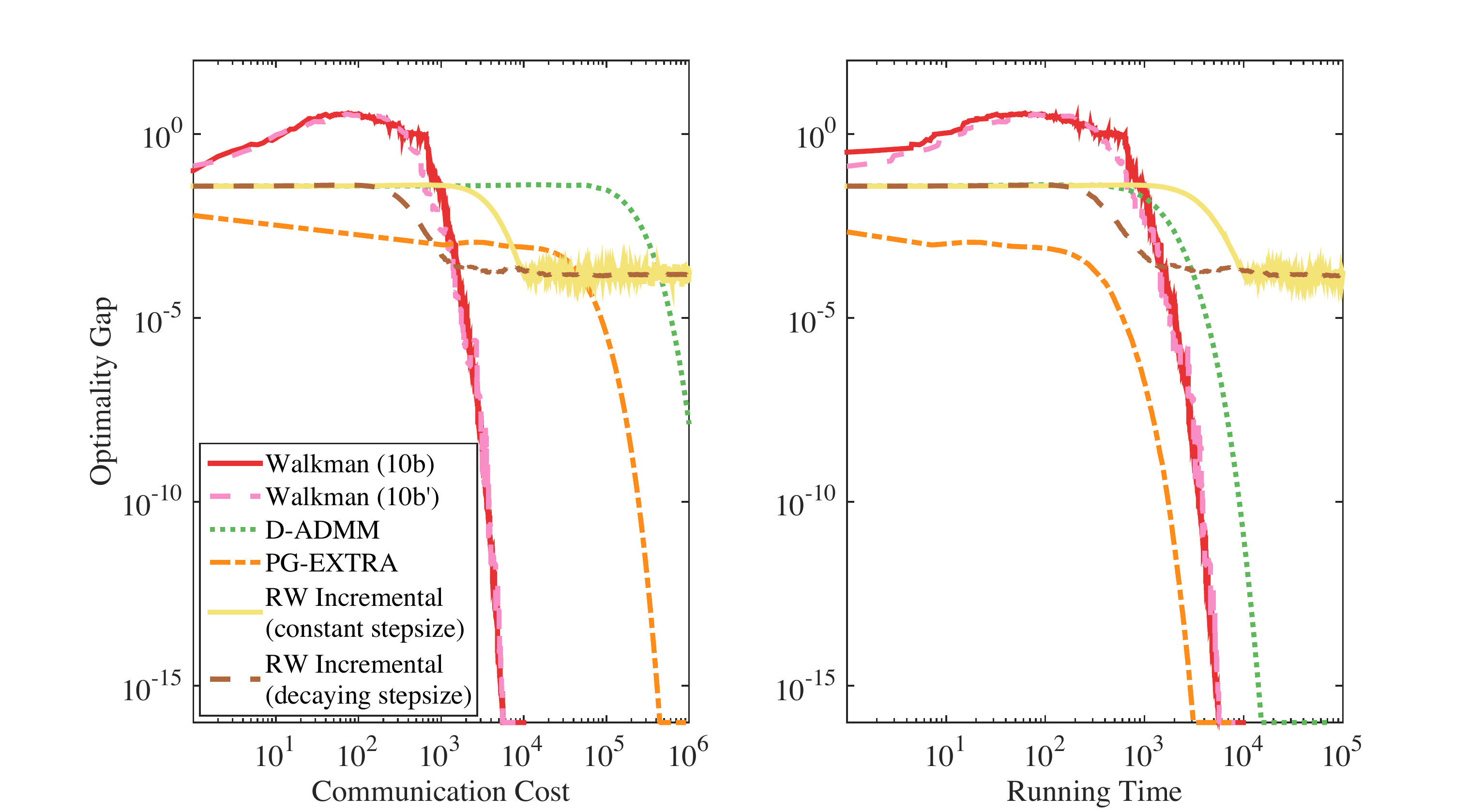}
        \caption{\footnotesize Performance of decentralized algorithms on NN-PCA.}
        \label{fig:nnpca}
\end{figure}

\section{Conclusion}\label{sc:con}
We have proposed a (random) walk algorithm, called Walkman, for decentralized consensus optimization.
The (random) walk carries the current solution $x$ and lets it updated by every visited agent.
Any limit point of the sequence of $x$ is almost surely a stationary point.
Under convexity assumption, the sequence converges to the optimal solution with a fixed step-size, which makes Walkman more efficient than the existing random-walk algorithms.
We have found Walkman uses less total communication than popular algorithms such as D-ADMM, EXTRA, exact diffusion, and PG-EXTRA though taking longer wall-clock time to converge. Random walks also add another layer of privacy protection.
%Numerical experiments also verify that that random walk helps get rid of the trap of some bad stationary points.
%Possible future work includes accelerating this algorithm and extending it to nonconvex optimization and varying networks.

\appendices
\section{Proof of Lemma \ref{lm:1}}
The proof of Lemma \ref{lm:1} takes a few steps, Lemmas \ref{lm:z_boundby_y}--\ref{lemma:yz_descent}.

Lemma \ref{lm:z_boundby_y} shows that the update on the dual variable can be bounded by that of the primal variable.
\begin{lemma}\label{lm:z_boundby_y}
        Under Assumption \ref{ass-lipschitz}, $(x^k, \sy^k, \sz^k)_{k> T}$, the sequence generated by Walkman iteration \eqref{eq:admm3'}, satisfies
        \begin{enumerate}
        \item if Walkman uses \eqref{eq:admm3y'}, it holds
      \begin{align}
                \| \sz^{k+1} - \sz^{k}\| =\|z_{i_k}^{k+1} \hspace{-0.8mm}- \hspace{-0.8mm}z_{i_k}^k\|\le L \|\sy^{k+1} - \sy^{k}\|; \label{eq-general-case}
         \end{align}
         \item if Walkman uses \eqref{eq:admmy-linearized}, it hold
          \begin{align}
                \| \sz^{k+1} - \sz^{k}\|=\|z_{i_k}^{k+1} \hspace{-0.8mm}- \hspace{-0.8mm}z_{i_k}^k\| \le L \|\sy^{\tau(k, i_k)+1} - \sy^{\tau(k, i_k)}\|. \label{eq-general-case-l}
         \end{align}
         \end{enumerate}
         \end{lemma}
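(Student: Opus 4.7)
\textbf{Proof plan for Lemma \ref{lm:z_boundby_y}.} The plan is to first observe that because only the $i_k$-th block is updated at iteration $k$, we have $\|\sz^{k+1}-\sz^k\|=\|z_{i_k}^{k+1}-z_{i_k}^k\|$, and similarly for $\sy$. So the entire task reduces to rewriting each dual difference as a gradient difference of $f_{i_k}$ and then applying the $L$-Lipschitz property from Assumption \ref{ass-lipschitz}.

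The key bookkeeping step is to show the invariant $z_i^{k} = \nabla f_i(\widetilde y_i^{k})$ (for some appropriate $\widetilde y_i^k$) that holds after each update to agent $i$ and is preserved during the intervening iterations where $i$ is idle. For case (1), using \eqref{eq:admm3y'}, I would write out the optimality condition of the prox step, namely $0 = \nabla f_{i_k}(y_{i_k}^{k+1}) + \beta(y_{i_k}^{k+1} - x^{k+1} - z_{i_k}^k/\beta)$, and combine it with \eqref{eq:admm3z'} to conclude $z_{i_k}^{k+1} = \nabla f_{i_k}(y_{i_k}^{k+1})$. For case (2), the linearized update \eqref{eq:admmy-linearized} rearranges to $\beta(x^{k+1}-y_{i_k}^{k+1}) = \nabla f_{i_k}(y_{i_k}^k) - z_{i_k}^k$, which combined with \eqref{eq:admm3z'} directly yields $z_{i_k}^{k+1} = \nabla f_{i_k}(y_{i_k}^k)$.

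Next I would use the fact that for $k > T$ the last-visit index $\tau(k,i_k)$ is well-defined and that between $\tau(k,i_k)+1$ and $k$ both $y_{i_k}$ and $z_{i_k}$ are frozen, so $z_{i_k}^k = z_{i_k}^{\tau(k,i_k)+1}$ and $y_{i_k}^k = y_{i_k}^{\tau(k,i_k)+1}$. Applying the identity derived above at the previous visit $\tau(k,i_k)$ gives, in case (1), $z_{i_k}^k = \nabla f_{i_k}(y_{i_k}^k)$, so
\begin{equation*}
z_{i_k}^{k+1}-z_{i_k}^k = \nabla f_{i_k}(y_{i_k}^{k+1}) - \nabla f_{i_k}(y_{i_k}^k),
\end{equation*}
while in case (2) it gives $z_{i_k}^k = \nabla f_{i_k}(y_{i_k}^{\tau(k,i_k)})$, so
\begin{equation*}
z_{i_k}^{k+1}-z_{i_k}^k = \nabla f_{i_k}(y_{i_k}^{\tau(k,i_k)+1}) - \nabla f_{i_k}(y_{i_k}^{\tau(k,i_k)}).
\end{equation*}
Lipschitz continuity then gives the two desired bounds after noting $\|\sy^{k+1}-\sy^k\|=\|y_{i_k}^{k+1}-y_{i_k}^k\|$ and $\|\sy^{\tau(k,i_k)+1}-\sy^{\tau(k,i_k)}\|=\|y_{i_k}^{\tau(k,i_k)+1}-y_{i_k}^{\tau(k,i_k)}\|$.

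The only delicate point is the base case: the identity $z_i^k = \nabla f_i(\cdot)$ must propagate correctly from the very first visit to agent $i$, which is why the statement restricts to $k>T$ (every agent has been visited at least once by then). In case (1) this is immediate since the first visit itself establishes $z_i = \nabla f_i(y_i)$; in case (2) one obtains after the first visit that $z_i^{T_i+1} = \nabla f_i(y_i^{T_i})$, and thereafter the relation between $z_i$ and the previous-iterate gradient of $f_i$ is preserved. No part of the argument is technically deep — the main obstacle is being careful about which iterate $y_{i_k}$ appears inside the gradient on each side of the difference, which is precisely what distinguishes the bound in (1) from the bound in (2).
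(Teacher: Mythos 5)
Your proposal is correct and follows essentially the same route as the paper's proof: derive the identities $z_{i_k}^{k+1}=\nabla f_{i_k}(y_{i_k}^{k+1})$ (prox case) and $z_{i_k}^{k+1}=\nabla f_{i_k}(y_{i_k}^{k})$ (linearized case) from the optimality condition combined with the $z$-update, use $z_{i}^{k}=z_{i}^{\tau(k,i)+1}$ and $y_{i}^{k}=y_{i}^{\tau(k,i)+1}$ to rewrite the dual difference as a gradient difference, and apply Lipschitz continuity. Your additional remark about the base case at the first visit is consistent with, and slightly more explicit than, the paper's argument.
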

\begin{proof}
Part 1) Remember   agent $i_k$ is activated at iteration $k$.  The optimality condition of \eqref{eq:admm3y'} for $i=i_k$ implies
        \begin{align}
                \nabla f_i(y_{i_k}^{k+1}) -\big(z_{i_k}^k+ \beta (x^{k+1} - y_{i_k}^{k+1})\big) = 0.
        \end{align}
        Substituting the above  into \eqref{eq:admm3z'} yields
        \begin{align}
                \nabla f_i(y_i^{k+1}) =  z_i^{k+1}, \quad \mbox{for }\ i = i_k. \label{eq:z_gradf}
        \end{align}
        Hence, for $i=i_k$, we have:
        \begin{align}
                \hspace{-1pt}\| z_i^{k+1} \hspace{-0.8mm}-\hspace{-0.8mm} z_i^{k}\| \hspace{-0.8mm} &\overset{(a)}{=}  \hspace{-0.8mm}\| z_i^{k+1} \hspace{-0.8mm}-\hspace{-0.8mm} z_i^{\tau(k, i)+1}\|  \hspace{-0.8mm}\overset{\eqref{eq:z_gradf}}{=}  \hspace{-0.8mm}\| \nabla f_i(y_i^{k+1})  \hspace{-0.8mm}-  \hspace{-0.8mm}\nabla f_i(y_i^{\tau(k, i)+1})\| \nonumber \\
                & \overset{\eqref{eq-ass-lip}}{\le} L \|y_i^{k+1} - y_i^{\tau(k, i)+1}\| \overset{(b)}{=} L \|y_i^{k+1} - y_i^{k}\|, \label{eq-case-1}
        \end{align}
        where $\tau(k, i)$ is defined in \eqref{eq:tau_def}.  Equality $(a)$ holds because $z_{i}^{k} = z_{i}^{\tau(k, i)+1}$ and  $(b)$ holds because $y_{i}^{k} = y_{i}^{\tau(k, i)+1}$. On the other hand, when $i\neq i_k$, agent $i$ is not activated at $k$, so $\| z_i^{k+1} - z_i^{k}\| = L \|y_i^{k+1} - y_i^{k}\| = 0$, and we have \eqref{eq-general-case}.
        
 Par 2) Substituting \eqref{eq:admmy-linearized} into \eqref{eq:admm3z'} yields
  \begin{align}
                \nabla f_i(y_i^{k}) =  z_i^{k+1}, \quad \mbox{for }\ i = i_k. \label{eq:z_gradf-l}
        \end{align}
        Comparing \eqref{eq:z_gradf} and \eqref{eq:z_gradf-l} and using $z_{i}^{k} = z_{i}^{\tau(k, i)+1}$ and $y_{i}^{k} = y_{i}^{\tau(k, i)+1}$, we get \eqref{eq-general-case-l} using a similar derivation for \eqref{eq-case-1}.
\end{proof}

Lemma \ref{lemma:x_descent} shows that the $x$-update in Walkman, i.e., \eqref{eq:admm3x'}, provides sufficient descent of the augmented Lagrangian.

\begin{lemma} \label{lemma:x_descent}
         Recall $L_\beta^k$ defined in \eqref{eq:defLk}. Under Assumption \ref{ass-prox}, for $\beta>\gamma, {k\ge 0},$ the Walkman iterates satisfy
                \begin{align}
                        L_\beta^k - L_\beta(x^{k+1}, \sy^k; \sz^k) \ge \frac{\beta - \gamma}{2}\|x^k - x^{k+1}\|^2. \label{eq:xdescent_lb}
                \end{align}
\end{lemma}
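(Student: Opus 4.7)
The plan is to view $L_\beta(x,\sy^k;\sz^k)$, as a function of $x$ alone, as a composite $r(x)+g(x)$ where $g$ is a $\beta$-strongly convex quadratic, and then exploit the fact that $x^{k+1}$ is exactly the proximal minimizer of this composite. First I would isolate the $x$-dependent part of the augmented Lagrangian: expanding
\begin{equation*}
g(x):=\frac{1}{n}\sum_{i=1}^{n}\Big(\langle z_i^k,x\rangle+\frac{\beta}{2}\|x-y_i^k\|^2\Big),
\end{equation*}
I would complete the square to rewrite it as $g(x)=\tfrac{\beta}{2}\|x-\bar x^{k+1}\|^2+\mathrm{const}$, with $\bar x^{k+1}=\tfrac{1}{n}\sum_i(y_i^k-z_i^k/\beta)$ recognized from \eqref{eq:admm3xa'-full}. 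Hence $L_\beta(\cdot,\sy^k;\sz^k)=r(\cdot)+g(\cdot)+\mathrm{const}$ and the definition \eqref{eq:admm3xb'-full} of $x^{k+1}=\prox_{r/\beta}(\bar x^{k+1})$ gives the first-order optimality $-\nabla g(x^{k+1})\in\partial r(x^{k+1})$.

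Next I would exploit the two convexity moduli separately. The quadratic $g$ is $\beta$-strongly convex, so
\begin{equation*}
g(x^k)\ge g(x^{k+1})+\langle\nabla g(x^{k+1}),x^k-x^{k+1}\rangle+\tfrac{\beta}{2}\|x^k-x^{k+1}\|^2.
\end{equation*}
For the nonsmooth term, Assumption \ref{ass-prox} (inequality \eqref{eq-prox-regular}) applied at the point $x^{k+1}$ with the subgradient $d=-\nabla g(x^{k+1})\in\partial r(x^{k+1})$ and the evaluation point $x^k$ yields
\begin{equation*}
r(x^k)+\tfrac{\gamma}{2}\|x^k-x^{k+1}\|^2\ge r(x^{k+1})-\langle\nabla g(x^{k+1}),x^k-x^{k+1}\rangle.
\end{equation*}
Adding the two displays cancels the linear $\langle\nabla g(x^{k+1}),\cdot\rangle$ term, leaving
\begin{equation*}
r(x^k)+g(x^k)\ge r(x^{k+1})+g(x^{k+1})+\tfrac{\beta-\gamma}{2}\|x^k-x^{k+1}\|^2,
\end{equation*}
which is exactly \eqref{eq:xdescent_lb} once the common constant (independent of $x$) is added to both sides to reconstruct $L_\beta$.

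There is no real obstacle here; the argument is essentially a textbook prox-descent bound adapted to the semiconvex setting. The only mild subtlety I would watch for is making sure the subgradient chosen in the semiconvexity inequality is precisely the one that cancels $\nabla g(x^{k+1})$ (so that the $\gamma$-term surfaces with the correct sign), and handling the $\tfrac{1}{n}$ scaling in \eqref{eq-Lagrangian} carefully when identifying $g$. The hypothesis $\beta>\gamma$ is used only at the very end to ensure the resulting coefficient $(\beta-\gamma)/2$ is positive, making the bound a genuine descent inequality.
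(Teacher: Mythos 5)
Your proposal is correct and follows essentially the same route as the paper: the paper's ``cosine identity'' step is exactly your exact quadratic (strong-convexity) expansion of the smooth part around $x^{k+1}$, its $d^k$ is exactly your $-\nabla g(x^{k+1})$ obtained from the optimality of the prox step, and both proofs then close with the same application of the semiconvexity inequality \eqref{eq-prox-regular}. No gaps.
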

\begin{proof}
          We rewrite the augmented Lagrangian  in \eqref{eq-Lagrangian}  as
        \begin{align}
        \hspace{-0.8mm}L_{\beta}\left( x, \sy; \sz \right) \hspace{-0.8mm}=\hspace{-0.8mm}r(x)\hspace{-0.8mm}+\hspace{-0.8mm} \frac{1}{n}\hspace{-0.8mm} \left( \hspace{-1.3mm} F(\sy) \hspace{-0.8mm}+\hspace{-0.8mm}  \frac{\beta}{2}\Big\|\mathds{1}\otimes x \hspace{-0.8mm}-\hspace{-0.8mm} \sy \hspace{-0.8mm}+\hspace{-0.8mm} \frac{\sz}{\beta}\Big\|^2\hspace{-0.8mm}-\hspace{-0.8mm} \frac{\|\sz\|^2}{2\beta} \hspace{-0.5mm}\right)\hspace{-1mm}. \label{eq-Lagrangian-equiv}
        \end{align}
        Applying the cosine identity $\|b+c\|^2 - \|a+c\|^2 = \|b-a\|^2 + 2\langle a+c, b-a \rangle$, we have
        \begin{align}
        &\ L_\beta^k - L_\beta(x^{k+1},\sy^k;\sz^k)-r(x^k) +r(x^{k+1}) \nonumber\\
        =&\ \frac{\beta}{2n}\big\|\mathds{1}\otimes x^k - \sy^k + \frac{\sz^k}{\beta}\big\|^2 - \frac{\beta}{2n}\big\|\mathds{1}\otimes x^{k+1} - \sy^k + \frac{\sz^k}{\beta}\big\|^2 \nonumber \\
        =&\ \frac{\beta}{2n}\sum_{i=1}^{n} \Big( \|x^k - x^{k+1}\|^2 + 2\big\langle x^{k+1} - y_i^k + \frac{z_i^k}{\beta}, x^k - x^{k+1} \big\rangle \Big) \nonumber \\
        &\hspace{-4.5mm}{\ge}\ \frac{\beta}{2}\|x^k - x^{k+1}\|^2 -\langle d^k, x^k-x^{k+1}\rangle,\label{eq-res-2}
        \end{align}
        where $d^k$ is defined as
%        $(a)$
%         holds since the optimality condition of \eqref{eq:admm3x'} implies
         \begin{align}
          d^k:=- \frac{\beta}{n}\sum_{i=1}^{n}(x^{k+1}-y^k_i+\frac{z^k_i}{\beta})\overset{(a)}{\in} \partial r(x^{k+1})\label{eq:dk_def},
         \end{align}
%         {\color{black} revised}
         where $(a)$ comes from the optimality condition of \eqref{eq:admm3x'} .
%         and thus $\sum_{i=1}^n\langle x^{k+1} - y_i^k + \frac{z_i^k}{\beta}, x^k - x^{k+1} \rangle\ge 0$.
        Assumption  \ref{ass-prox} states
        $r(x^{k}) + \frac{\gamma}{2}\|x^k - x^{k+1}\|^2 \ge r(x^{k+1}) + \langle d^k, x^k - x^{k+1} \rangle, $
        substituting which into \eqref{eq-res-2} gives us \eqref{eq:xdescent_lb}.
\end{proof}

In Lemma \ref{lemma:yz_descent}, we derive the lower bound of descent in the augmented Lagrangian over the updates of $\sy$ and $\sz$.
\begin{lemma} \label{lemma:yz_descent} Recall $L_\beta^k$ defined in \eqref{eq:defLk}. Under Assumption \ref{ass-lipschitz}, for any $k>T$,
\begin{enumerate}
\item
if $\beta\geq 2L+2$,  Walkman using \eqref{eq:admm3y'} satisfies
        \begin{align}
                L_\beta(x^{k+1}, \sy^{k}; \sz^{k}) - L_\beta^{k+1} \geq\frac{1}{n}\|\sy^k - \sy^{k+1}\|^2.\label{eq:yz_update_descent}
        \end{align}
\item
if $\beta> L$,
Walkman using \eqref{eq:admmy-linearized} satisfies
        \begin{align}
   &L_\beta(x^{k+1}, \sy^{k}; \sz^{k}) - L_\beta^{k+1}\nonumber\\
&\geq\frac{\beta\hspace{-0.8mm} -\hspace{-0.8mm} L}{2n}\|\sy^k \hspace{-0.8mm} -\hspace{-0.8mm}  \sy^{k+1}\|^2\hspace{-0.8mm}-\hspace{-0.8mm}  \frac{L^2}{n\beta}\|\sy^{\tau(k, i_k)+1}\hspace{-0.8mm}  - \hspace{-0.8mm} \sy^{\tau(k, i_k)}\|^2.\label{eq:yz_update_descent-l}
        \end{align}

\end{enumerate}
\end{lemma}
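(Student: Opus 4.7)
\textbf{Proof proposal for Lemma \ref{lemma:yz_descent}.} The plan is to exploit the fact that only the $i_k$-th blocks of $\sy$ and $\sz$ change during the $(k{+}1)$-st iteration, so the entire descent in the augmented Lagrangian reduces to a single-block calculation. Setting $i=i_k$ throughout, I will expand
\begin{align*}
n\bigl[L_\beta(x^{k+1},\sy^k;\sz^k)-L_\beta^{k+1}\bigr]
 &= f_i(y_i^k)-f_i(y_i^{k+1}) + \langle z_i^k,x^{k+1}-y_i^k\rangle \\
 &\quad - \langle z_i^{k+1},x^{k+1}-y_i^{k+1}\rangle \\
 &\quad + \tfrac{\beta}{2}\|x^{k+1}-y_i^k\|^2 - \tfrac{\beta}{2}\|x^{k+1}-y_i^{k+1}\|^2,
\end{align*}
and then use the $z$-update \eqref{eq:admm3z'}, which gives $z_i^{k+1}-z_i^k=\beta(x^{k+1}-y_i^{k+1})$, to rewrite $\langle z_i^{k+1}-z_i^k,x^{k+1}-y_i^{k+1}\rangle=\tfrac{1}{\beta}\|z_i^{k+1}-z_i^k\|^2$. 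Applying the cosine identity $\|x^{k+1}-y_i^k\|^2-\|x^{k+1}-y_i^{k+1}\|^2 = \|y_i^k-y_i^{k+1}\|^2 + 2\langle x^{k+1}-y_i^{k+1},y_i^{k+1}-y_i^k\rangle$ as in Lemma \ref{lemma:x_descent}, the cross term is absorbed into $\langle z_i^{k+1}-z_i^k,y_i^{k+1}-y_i^k\rangle$, and the expression collapses to
\begin{align*}
n\bigl[L_\beta(x^{k+1},\sy^k;\sz^k)-L_\beta^{k+1}\bigr]
 &= f_i(y_i^k)-f_i(y_i^{k+1}) + \langle z_i^{k+1},y_i^{k+1}-y_i^k\rangle \\
 &\quad + \tfrac{\beta}{2}\|y_i^k-y_i^{k+1}\|^2 - \tfrac{1}{\beta}\|z_i^{k+1}-z_i^k\|^2.
\end{align*}

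For part 1), I will use the identity $z_i^{k+1}=\nabla f_i(y_i^{k+1})$ (eq.~\eqref{eq:z_gradf}) together with $L$-Lipschitz differentiability in the form $f_i(y_i^k) \ge f_i(y_i^{k+1})+\langle\nabla f_i(y_i^{k+1}),y_i^k-y_i^{k+1}\rangle-\tfrac{L}{2}\|y_i^k-y_i^{k+1}\|^2$ (equivalent to the descent lemma for smooth functions), which shows the first three terms dominate $-\tfrac{L}{2}\|y_i^k-y_i^{k+1}\|^2$. Applying Lemma \ref{lm:z_boundby_y}(1) bounds $\tfrac{1}{\beta}\|z_i^{k+1}-z_i^k\|^2\le \tfrac{L^2}{\beta}\|\sy^{k+1}-\sy^k\|^2$, and since only the $i_k$-th block moves, $\|\sy^{k+1}-\sy^k\|^2=\|y_i^{k+1}-y_i^k\|^2$. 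Collecting coefficients gives the lower bound $\frac{1}{n}\bigl(\frac{\beta-L}{2}-\frac{L^2}{\beta}\bigr)\|\sy^{k+1}-\sy^k\|^2$, and a direct check confirms that $\beta\ge 2L+2$ is exactly the threshold making this coefficient at least $\tfrac{1}{n}$, yielding \eqref{eq:yz_update_descent}.

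For part 2), the same Lagrangian identity holds, but now the linearized step \eqref{eq:admmy-linearized} gives $z_i^{k+1}=\nabla f_i(y_i^k)$ (eq.~\eqref{eq:z_gradf-l}) rather than $\nabla f_i(y_i^{k+1})$. I will then use the standard upper form of the descent lemma $f_i(y_i^{k+1})\le f_i(y_i^k)+\langle\nabla f_i(y_i^k),y_i^{k+1}-y_i^k\rangle+\tfrac{L}{2}\|y_i^k-y_i^{k+1}\|^2$, which rearranges to $f_i(y_i^k)-f_i(y_i^{k+1})+\langle z_i^{k+1},y_i^{k+1}-y_i^k\rangle\ge -\tfrac{L}{2}\|y_i^k-y_i^{k+1}\|^2$. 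The only difference from part 1) is that the dual-increment bound must now come from Lemma \ref{lm:z_boundby_y}(2), giving $\tfrac{1}{\beta}\|z_i^{k+1}-z_i^k\|^2\le \tfrac{L^2}{\beta}\|\sy^{\tau(k,i_k)+1}-\sy^{\tau(k,i_k)}\|^2$. Combining these produces exactly \eqref{eq:yz_update_descent-l} for any $\beta>L$.

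The main subtlety is in part 2): because the linearized update evaluates $\nabla f_i$ at the stale point $y_i^k$, the telescoping that closed part 1) at a single iteration leaves behind a residual tied to the \emph{previous} activation of agent $i$, not to the current iteration. That is precisely why $z$-increments in Lemma \ref{lm:z_boundby_y}(2) cannot be controlled by $\|\sy^{k+1}-\sy^k\|$ alone and why the proof cannot hide the $\tau(k,i_k)$ term. This historical dependence is what forces the Lyapunov function $M_\beta^k$ in \eqref{eq:defMk} to carry the auxiliary sum $\frac{L^2}{n}\sum_i\|y_i^{\tau(k,i)+1}-y_i^{\tau(k,i)}\|^2$, so that when one passes from the one-step descent \eqref{eq:yz_update_descent-l} to monotonicity of $M_\beta^k$, the historical term cancels against the change in the memory sum. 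Handling this bookkeeping cleanly is the step to be most careful about, but the inequality itself is a direct consequence of the two algebraic ingredients already collected.
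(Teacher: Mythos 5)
Your proposal is correct and follows essentially the same route as the paper: expand the single-block difference, use the $z$-update and the cosine identity to reach $f_{i_k}(y_{i_k}^k)-f_{i_k}(y_{i_k}^{k+1})+\langle z_{i_k}^{k+1},y_{i_k}^{k+1}-y_{i_k}^k\rangle+\tfrac{\beta}{2}\|y_{i_k}^k-y_{i_k}^{k+1}\|^2-\tfrac{1}{\beta}\|z_{i_k}^{k+1}-z_{i_k}^k\|^2$, then apply the descent lemma with $z_{i_k}^{k+1}=\nabla f_{i_k}(y_{i_k}^{k+1})$ (resp.\ $\nabla f_{i_k}(y_{i_k}^k)$) and Lemma \ref{lm:z_boundby_y} part 1) (resp.\ part 2)), exactly as in the paper's chain (a)--(e). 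The only nitpick is that $\beta\ge 2L+2$ is a sufficient condition for $\tfrac{\beta-L}{2}-\tfrac{L^2}{\beta}\ge 1$ rather than ``exactly the threshold,'' but this does not affect the validity of the argument.
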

\begin{proof}
%We first consider the case that Walkman iterates with \eqref{eq:admm3y'}.
From the Lagrangian \eqref{eq-Lagrangian}, we derive
%        From the Lagrangian  \eqref{eq-Lagrangian},%which completes the proof for statement 1). Next, we prove statement 2) of the Lemma.
        \begin{align}
        &\ L_\beta(x^{k+1}, \sy^{k}; \sz^{k}) - L_\beta^{k+1} \nonumber \\
        =&\ \frac{1}{n}\Big( f_{i_k}(y_{i_k}^k) + \langle z_{i_k}^{k}, x^{k+1} \hspace{-0.8mm}-\hspace{-0.8mm} y_{i_k}^{k}\rangle +\frac{\beta}{2}\|x^{k+1} - y^k_{i_k}\|^2 \nonumber \\
        &\hspace{-5pt}- f_{i_k}(y_{i_k}^{k+1}) -\langle z_{i_k}^{k+1}, x^{k+1} \hspace{-0.8mm}-\hspace{-0.8mm} y_{i_k}^{k+1}\rangle- \frac{\beta}{2} \|x^{k+1} - y_{i_k}^{k+1} \|^2\Big) \nonumber \\
        \overset{(a)}{=}&\ \frac{1}{n}\Big( f_{i_k}(y_{i_k}^k) - f_{i_k}(y_{i_k}^{k+1}) + \frac{\beta}{2}\|y_{i_k}^k - y_{i_k}^{k+1}\|^2 \nonumber\\
        &\quad\quad - \langle y_{i_k}^k - y_{i_k}^{k+1}, z_{i_k}^{k+1} \rangle - \frac{1}{\beta}\|z_{i_k}^{k+1} - z_{i_k}^k\|^2\Big) \label{eq:yz-common} \\
        \overset{(b)}{=}&\ \frac{1}{n}\Big( f_{i_k}(y_{i_k}^k) - f_{i_k}(y_{i_k}^{k+1}) + \frac{\beta}{2}\|y_{i_k}^k - y_{i_k}^{k+1}\|^2 \nonumber \\
        &\quad\quad - \langle y_{i_k}^k \hspace{-0.8mm}- \hspace{-0.8mm}y_{i_k}^{k+1}, \nabla f_{i_k}(y_{i_k}^{k+1}) \rangle \hspace{-0.8mm}- \hspace{-0.8mm}\frac{1}{\beta}\|z_{i_k}^{k+1} \hspace{-0.8mm}- \hspace{-0.8mm}z_{i_k}^k\|^2\Big) 
        \label{eq:z_substitute} \\
        \overset{(c)}{\ge}& \frac{1}{n}\Big(\hspace{-0.8mm}\hspace{-0.8mm}-\hspace{-0.8mm}\frac{L}{2}\|y_{i_k}^k \hspace{-0.8mm}-\hspace{-0.8mm} y_{i_k}^{k+1}\|^2 \hspace{-0.8mm}+\hspace{-0.8mm}\frac{\beta}{2}\|y_{i_k}^k \hspace{-0.8mm}-\hspace{-0.8mm} y_{i_k}^{k+1}\|^2 \hspace{-0.8mm}-\hspace{-0.8mm} \frac{1}{\beta}\|z_{i_k}^{k+1} - z_{i_k}^k\|^2\hspace{-0.8mm}\Big) \label{eq:yz-common1} \\
        \overset{(d)}{\ge}& \frac{1}{n}\Big(\hspace{-0.8mm}\hspace{-0.8mm}-\hspace{-0.8mm}\frac{L}{2}\|y_{i_k}^k \hspace{-0.8mm}-\hspace{-0.8mm} y_{i_k}^{k+1}\|^2 \hspace{-0.8mm}+\hspace{-0.8mm}\frac{\beta}{2}\|y_{i_k}^k \hspace{-0.8mm}-\hspace{-0.8mm} y_{i_k}^{k+1}\|^2 \hspace{-0.8mm}-\hspace{-0.8mm} \frac{L^2}{\beta}\|y_{i_k}^k - y_{i_k}^{k+1}\|^2\hspace{-0.8mm}\Big) \nonumber \\
        \overset{(e)}{\ge}&\ \frac{1}{n}\|y_{i_k}^k - y_{i_k}^{k+1}\|^2 = \frac{1}{n}\|\sy^k - \sy^{k+1}\|^2.
        \end{align}
        where equality (a) holds due to$\|b+c\|^2 - \|a+c\|^2 = \|b-a\|^2 + 2\langle a+c, b-a \rangle$ and recursion \eqref{eq:admm3z'}, equality (b) holds because of \eqref{eq:z_gradf}, inequality (c) holds because  $f_i(\cdot)$ is $L$-Lipschitz differentiable, inequality (d) holds because of  \eqref{eq-case-1}, and inequality (e) follows from the assumption $\beta \ge 2L+2$. 
        
        Next, we study Walkman using \eqref{eq:admmy-linearized}. The above equation array holds to \eqref{eq:yz-common}.
        By substituting \eqref{eq:z_gradf-l} into \eqref{eq:yz-common}, we get
        \begin{align}
        &\ L_\beta(x^{k+1}, \sy^{k}; \sz^{k}) - L_\beta^{k+1} \nonumber \\
  {=}&\ \frac{1}{n}\Big( f_{i_k}(y_{i_k}^k) - f_{i_k}(y_{i_k}^{k+1}) + \frac{\beta}{2}\|y_{i_k}^k - y_{i_k}^{k+1}\|^2 \nonumber \\
        &\quad\quad - \langle y_{i_k}^k \hspace{-0.8mm}- \hspace{-0.8mm}y_{i_k}^{k+1}, \nabla f_{i_k}(y_{i_k}^{k}) \rangle \hspace{-0.8mm}- \hspace{-0.8mm}\frac{1}{\beta}\|z_{i_k}^{k+1} \hspace{-0.8mm}- \hspace{-0.8mm}z_{i_k}^k\|^2\Big) \label{eq:z_substitute-l}.
        \end{align}
       While \eqref{eq:z_substitute} has $\nabla f_{i_k}(y_{i_k}^{k+1})$,  \eqref{eq:z_substitute-l} involves $\nabla f_{i_k}(y_{i_k}^{k})$. However, from \eqref{eq:z_substitute-l}, using  $\nabla f_{i_k}(\cdot)$ being $L$-Lipschitz, we still get \eqref{eq:yz-common1}, to which we can apply Lemma \ref{lm:z_boundby_y} 2) to get \eqref{eq:yz_update_descent-l}.
\end{proof}

In Lemma \ref{lm:lyapunov_descent}, we establish the sufficient descent in Lyapunov functions of Walkman.
\begin{lemma}\label{lm:lyapunov_descent}
Recall $L_\beta^k$ and $M_{\beta}^k$ defined in \eqref{eq:defLk} and \eqref{eq:defMk}. Under Assumptions \ref{ass-lipschitz} and \ref{ass-prox}, for  any $k>T$,
\begin{enumerate}
\item if $\beta> \max\{\gamma, 2L+2\}$, Walkman using \eqref{eq:admm3y'} satisfies
\begin{align}
L_{\beta}^k\hspace{-0.8mm} - \hspace{-0.8mm}L_{\beta}^{k+1}\hspace{-0.8mm}\ge\hspace{-0.8mm} \frac{\beta-\gamma}{2}\|x^k \hspace{-0.8mm}-\hspace{-0.8mm} x^{k+1}\|^2 \hspace{-0.8mm}+ \hspace{-0.8mm}\frac{1}{n}\|\sy^k\hspace{-0.8mm} -\hspace{-0.8mm} \sy^{k+1}\|^2;\label{eq:Lk_descent}
\end{align}
\item if $\beta> \max\{\gamma, 2L^2+L+2\}$ the Walkman using \eqref{eq:admmy-linearized} satisfies
\begin{align}
\!\!\!\!M_{\beta}^k \hspace{-0.8mm}-\hspace{-0.8mm} M_{\beta}^{k+1} \hspace{-0.8mm}\ge& \frac{\beta-\gamma}{2}\|x^k \hspace{-0.8mm}-\hspace{-0.8mm} x^{k+1}\|^2 \hspace{-0.8mm}+ \hspace{-0.8mm}\frac{1}{n}\|\sy^k\hspace{-0.8mm} -\hspace{-0.8mm} \sy^{k+1}\|^2\notag\\
&\hspace{-0.8mm} + \frac{L^2}{2n}\|\sy^{\tau (k,i_k)+1}\hspace{-0.8mm} -\hspace{-0.8mm} \sy^{\tau (k,i_k)}\|^2.\label{eq:Mk_descent}
\end{align}
\end{enumerate}
\end{lemma}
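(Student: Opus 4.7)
The plan is to treat the two parts separately but similarly, using the two-step decomposition $L_\beta^k - L_\beta^{k+1} = [L_\beta^k - L_\beta(x^{k+1},\sy^k;\sz^k)] + [L_\beta(x^{k+1},\sy^k;\sz^k) - L_\beta^{k+1}]$ and then invoking Lemma \ref{lemma:x_descent} on the first bracket and Lemma \ref{lemma:yz_descent} on the second.

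For part 1), this is essentially a one-line addition. Under $\beta > \max\{\gamma, 2L+2\}$, summing the bound $\frac{\beta-\gamma}{2}\|x^k - x^{k+1}\|^2$ from Lemma \ref{lemma:x_descent} with the bound $\frac{1}{n}\|\sy^k - \sy^{k+1}\|^2$ from Lemma \ref{lemma:yz_descent}(1) immediately yields \eqref{eq:Lk_descent}. No subtlety arises because both estimates reference the same intermediate point $(x^{k+1},\sy^k;\sz^k)$.

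For part 2), the main work is to bookkeep the extra term $\frac{L^2}{n}\sum_{i}\|y_i^{\tau(k,i)+1}-y_i^{\tau(k,i)}\|^2$ in $M_\beta^k$. I will exploit the fact that, at iteration $k$, only agent $i_k$ is activated, so $\tau(k+1,i)=\tau(k,i)$ for $i\neq i_k$ and $\tau(k+1,i_k)=k$. Consequently,
\begin{align*}
M_\beta^k - M_\beta^{k+1} = (L_\beta^k - L_\beta^{k+1}) + \frac{L^2}{n}\|y_{i_k}^{\tau(k,i_k)+1} - y_{i_k}^{\tau(k,i_k)}\|^2 - \frac{L^2}{n}\|y_{i_k}^{k+1} - y_{i_k}^k\|^2.
\end{align*}
Combining Lemma \ref{lemma:x_descent} with Lemma \ref{lemma:yz_descent}(2), and noting that $\|\sy^k - \sy^{k+1}\|=\|y_{i_k}^k - y_{i_k}^{k+1}\|$ and $\|\sy^{\tau(k,i_k)+1}-\sy^{\tau(k,i_k)}\|=\|y_{i_k}^{\tau(k,i_k)+1}-y_{i_k}^{\tau(k,i_k)}\|$ (because only the $i_k$-component is updated at each respective iteration), I will substitute to get
\begin{align*}
M_\beta^k - M_\beta^{k+1} \ge \frac{\beta-\gamma}{2}\|x^k \hspace{-0.5mm}-\hspace{-0.5mm} x^{k+1}\|^2 + \tfrac{\beta-L}{2n}\|\sy^k \hspace{-0.5mm}-\hspace{-0.5mm} \sy^{k+1}\|^2 + \tfrac{L^2}{n}\bigl(1-\tfrac{1}{\beta}\bigr)\|\sy^{\tau(k,i_k)+1}\hspace{-0.5mm}-\hspace{-0.5mm}\sy^{\tau(k,i_k)}\|^2 - \tfrac{L^2}{n}\|\sy^k - \sy^{k+1}\|^2.
\end{align*}

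The remaining step, which is the only place where the stronger condition $\beta > 2L^2+L+2$ is needed, is to verify that the coefficients collapse to the form claimed in \eqref{eq:Mk_descent}. Specifically, $\frac{\beta-L}{2n}-\frac{L^2}{n}\ge \frac{1}{n}$ reduces to $\beta \ge 2L^2+L+2$, which absorbs the negative $\|\sy^k-\sy^{k+1}\|^2$ term into the desired $\frac{1}{n}\|\sy^k-\sy^{k+1}\|^2$ term, while $\frac{L^2}{n}(1-\frac{1}{\beta}) \ge \frac{L^2}{2n}$ reduces to $\beta \ge 2$, which is already implied. I do not expect any real obstacle here; the mild difficulty is purely clerical, namely getting the indices of $\tau(\cdot,\cdot)$ between $k$ and $k+1$ correct so that the terms in $M_\beta^k - M_\beta^{k+1}$ telescope down to the two single terms indexed by $i_k$.
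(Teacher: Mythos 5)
Your proposal is correct and follows essentially the same route as the paper's proof: part 1) by adding Lemma \ref{lemma:x_descent} and Lemma \ref{lemma:yz_descent}(1), and part 2) by telescoping the $\tau(\cdot,i)$ bookkeeping terms down to the single $i_k$-indexed pair and then absorbing the coefficients via $\frac{\beta-L}{2}-L^2\ge 1$ and $1-\frac{1}{\beta}\ge\frac{1}{2}$. The index manipulations and the role of the threshold $\beta>2L^2+L+2$ match the paper exactly.
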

\begin{proof}
Statement 1) is a direct result of adding \eqref{eq:xdescent_lb} and \eqref{eq:yz_update_descent}. 
To prove statement 2),  noticing
\begin{equation}
y_{i}^{\tau (k+1,i)+1}\hspace{-0.8mm} - \hspace{-0.3mm}y_{i}^{\tau (k+1,i)} \hspace{-0.3mm}=\hspace{-0.3mm}\begin{cases} y_i^{k+1}\hspace{-0.8mm} - \hspace{-0.8mm}y_i^k, &\hspace{-3mm} i=i_k\\ y_{i}^{\tau (k,i)+1}\hspace{-0.8mm} -\hspace{-0.8mm} y_{i}^{\tau (k,i)},&\hspace{-3mm}\text{otherwise},\end{cases}
\end{equation} 
we derive
\begin{align}
&M_{\beta}^k - M_{\beta}^{k+1} \notag\\
= &L_{\beta}^k - L_{\beta}^{k+1} +\frac{L^2}{n}\big(\|y_{i_k}^{\tau (k,i_k)+1}\hspace{-0.8mm} -\hspace{-0.8mm} y_{i_k}^{\tau (k,i_k)}\|^2-\|y_{i_k}^{k+1}\hspace{-0.8mm} - \hspace{-0.8mm}y_{i_k}^k\|^2\big)\notag\\
= &L_{\beta}^k - L_{\beta}^{k+1} +\frac{L^2}{n}\big(\|\sy^{\tau (k,i_k)+1}\hspace{-0.8mm} -\hspace{-0.8mm} \sy^{\tau (k,i_k)}\|^2-\|\sy^{k+1}\hspace{-0.8mm} - \hspace{-0.8mm}\sy^k\|^2\big).\label{eq:mk_descent_mid}
\end{align}
Substituting \eqref{eq:xdescent_lb} and \eqref{eq:yz_update_descent-l} into \eqref{eq:mk_descent_mid} and using $\frac{\beta}{2}-\frac{L}{2}-{L^2}\ge1$ and $1-\frac{1}{\beta}> \frac{1}{2}$, we complete the proof of statement 2).
\end{proof}

Lemma \ref{lm:lyapunov_func_lowerbounded} states that both Lyapunov functions are  lower bounded. 
\begin{lemma}\label{lm:lyapunov_func_lowerbounded}
For $\beta> \max\{\gamma, 2L+2\}$ (resp. $\beta> \max\{\gamma, 2L^2+L+2\}$),  Walkman using \eqref{eq:admm3y'} (resp. \eqref{eq:admmy-linearized}) ensures a lower bounded sequence $(L_{\beta}^k)_{k\ge 0}$ (resp. $(M_{\beta}^k)_{k\ge 0}$).
\end{lemma}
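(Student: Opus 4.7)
\textbf{Proof proposal for Lemma \ref{lm:lyapunov_func_lowerbounded}.} The plan is to exploit the Lagrange-multiplier identity that holds at the iterates once every agent has been visited. The optimality condition of \eqref{eq:admm3y'}, already derived in \eqref{eq:z_gradf}, gives $z_i^{k+1} = \nabla f_i(y_i^{k+1})$ whenever $i = i_k$; since $z_i$ and $y_i$ are updated at exactly the same iterations, this propagates to $z_i^k = \nabla f_i(y_i^k)$ for all $i \in V$ as soon as $k > T$. For the linearized update \eqref{eq:admmy-linearized}, the analogous identity \eqref{eq:z_gradf-l} only gives $z_i^k = \nabla f_i(y_i^{\tau(k,i)})$, and the discrepancy $z_i^k - \nabla f_i(y_i^k)$ is controlled, through Assumption \ref{ass-lipschitz}, by $L\|y_i^{\tau(k,i)+1} - y_i^{\tau(k,i)}\|$. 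This is precisely the quantity appearing in the correction term of $M_\beta^k$.

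For the prox case I would substitute $z_i^k = \nabla f_i(y_i^k)$ into \eqref{eq:defLk} to obtain
\begin{equation*}
L_\beta^k = r(x^k) + \frac{1}{n}\sum_{i=1}^n \Big[f_i(y_i^k) + \langle \nabla f_i(y_i^k),\, x^k - y_i^k\rangle + \frac{\beta}{2}\|x^k - y_i^k\|^2\Big].
\end{equation*}
The standard Lipschitz-gradient inequality $f_i(y_i^k) + \langle \nabla f_i(y_i^k), x^k - y_i^k\rangle \geq f_i(x^k) - \frac{L}{2}\|x^k - y_i^k\|^2$ then absorbs the cross term, yielding
\begin{equation*}
L_\beta^k \;\geq\; r(x^k) + \frac{1}{n}\sum_{i=1}^n f_i(x^k) + \frac{\beta - L}{2n}\|\mathds{1}\otimes x^k - \sy^k\|^2 \;\geq\; \underline{f}
\end{equation*}
whenever $\beta \geq L$ (which is ensured by $\beta > 2L+2$), by Assumption \ref{coercivity}.

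For the linearized case I would split $\langle z_i^k, x^k - y_i^k\rangle$ as $\langle \nabla f_i(y_i^k), x^k - y_i^k\rangle + \langle z_i^k - \nabla f_i(y_i^k), x^k - y_i^k\rangle$, treat the first piece exactly as above, and bound the second using Young's inequality with parameter $\alpha = 1$:
\begin{equation*}
\langle z_i^k - \nabla f_i(y_i^k),\, x^k - y_i^k\rangle \;\geq\; -\tfrac{1}{2}\|z_i^k - \nabla f_i(y_i^k)\|^2 - \tfrac{1}{2}\|x^k - y_i^k\|^2.
\end{equation*}
The Lipschitz identity gives $\|z_i^k - \nabla f_i(y_i^k)\|^2 \leq L^2\|y_i^{\tau(k,i)+1} - y_i^{\tau(k,i)}\|^2$, so exactly half of the extra regularizer $\frac{L^2}{n}\sum_i\|y_i^{\tau(k,i)+1} - y_i^{\tau(k,i)}\|^2$ in \eqref{eq:defMk} cancels the negative contribution, and the remaining half stays non-negative. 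Collecting everything,
\begin{equation*}
M_\beta^k \;\geq\; r(x^k) + \frac{1}{n}\sum_{i=1}^n f_i(x^k) + \frac{\beta - L - 1}{2n}\|\mathds{1}\otimes x^k - \sy^k\|^2 + \frac{L^2}{2n}\sum_{i=1}^n\|y_i^{\tau(k,i)+1} - y_i^{\tau(k,i)}\|^2 \;\geq\; \underline{f},
\end{equation*}
provided $\beta \geq L + 1$, which is implied by $\beta > 2L^2 + L + 2$. Finally, for the finitely many indices $0 \leq k \leq T$, Assumption \ref{ass-markov} gives $T < \infty$ almost surely, and since the algorithm is well-defined with finite iterates at every finite time, the initial segment $L_\beta^0,\dots,L_\beta^T$ (respectively $M_\beta^0,\dots,M_\beta^T$) is a finite collection of finite real numbers; the overall infimum is therefore finite almost surely. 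The main obstacle is the balancing in the linearized case: one must choose the Young's parameter so that the cross-term loss is exactly dominated by the coefficient $L^2$ placed in front of the correction term of $M_\beta^k$. This is precisely why the definition \eqref{eq:defMk} uses $L^2$ and not a smaller constant.
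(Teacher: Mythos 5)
Your proposal is correct and follows essentially the same route as the paper: substitute the multiplier identity $z_i^k=\nabla f_i(y_i^{\,\cdot})$ (valid for all $i$ once $k>T$), apply the descent lemma to pass from $f_i(y_i^k)$ to $f_i(x^k)$, and in the linearized case absorb the stale-gradient cross term via Young's inequality against the $\frac{L^2}{n}\sum_i\|y_i^{\tau(k,i)+1}-y_i^{\tau(k,i)}\|^2$ correction in $M_\beta^k$. The only difference is bookkeeping: your Young's split retains half the correction term and needs only $\beta\ge L+1$, whereas the paper's split cancels it exactly and uses $\beta\ge 2L^2+L+2$; both are covered by the stated hypothesis, and your explicit handling of the finitely many indices $k\le T$ is a welcome (if routine) addition.
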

\begin{proof}
For Walkman using \eqref{eq:admm3y'} and  $k>T$, we have
\begin{align}
        L_{\beta}^k        &= r(x^k)+ \frac{1}{n}\sum_{j=1}^n\big(f_j(y_j^{k}) + \langle z_j^k, x^k - y_j^{k}\rangle\big)  \nonumber\\
        &~~~~~+\hspace{-0.8mm} \frac{\beta}{2n}\|\mathds{1}\otimes x^k \hspace{-0.8mm}-\hspace{-0.8mm} \sy^k\|^2 \nonumber \\
        &\overset{\eqref{eq:z_gradf}}{=}r(x^k)+ \frac{1}{n}\sum_{j=1}^n\big(f_j(y_j^{k}) + \langle \nabla f_{j}(y_{j}^{k}), x^k - y_j^{k}\rangle\big)  \nonumber\\
        &\hspace{5mm}+\hspace{-0.8mm} \frac{\beta}{2n}\|\mathbf{1}\hspace{-0.8mm}\otimes\hspace{-0.8mm} x^k\hspace{-0.8mm}-\hspace{-0.8mm} \sy^k\hspace{-0.4mm}\|^2  \nonumber\\
        &\overset{\rm (a)}{\geq} \hspace{-0.8mm}r(x^k)\hspace{-0.8mm}+ \hspace{-0.8mm} \frac{1}{n}\sum_{j=1}^n  f_j(x^k)  + \frac{\beta -L}{2n}\|\mathbf{1}\otimes x^k- \sy^k\|^2 \label{eq:lowerbound_L-1}\\
        & \geq \min_x \Big\{r(x)+ \frac{1}{n}\sum_{j=1}^n  f_j(x)\Big\} + \frac{\beta\hspace{-0.8mm} -\hspace{-0.8mm}L}{2n}\|\mathbf{1}\otimes x^k\hspace{-0.8mm}- \hspace{-0.8mm}\sy^k\|^2 \nonumber\\
&  \overset{\rm (b)}{>}\hspace{-0.8mm} -\infty, \nonumber
        \end{align}
        where (a) holds as each $f_j$ is Lipschitz differentiable and (b) from Assumption \ref{coercivity} and $\beta >L$. So, $L_{\beta}^k$ is lower bounded. 

Next, for Walkman using \eqref{eq:admmy-linearized} and $k>T$, we derive
% {\color{black} There's something wrong with the derivation, according revision has been made. }
\begin{align}
 &M_{\beta}^k\notag\\
% =&r(x^k)+ \frac{1}{n}\sum_{j=1}^n\big(f_j(y_j^{k}) + \langle z_j^k, x^k - y_j^{k}\rangle\big)  \nonumber\\
%        &+\hspace{-0.8mm} \frac{\beta}{2n}\|\mathds{1}\otimes x^k \hspace{-0.8mm}-\hspace{-0.8mm} \sy^k\|^2+\frac{L^2}{n\beta}\sum_{i=1}^{n}\| y_{i}^{\tau (k,i)+1} - y_{i}^{\tau (k,i)}\|^2 \nonumber \\
\overset{\eqref{eq:z_gradf-l}}{=} & r(x^k)+ \frac{1}{n}\sum_{j=1}^n\big(f_j(y_j^{k}) + \langle  \nabla f_{j}(y_{j}^{\tau(k, j)}), x^k - y_j^{k}\rangle\big)  \nonumber\\
        &+\hspace{-0.8mm} \frac{\beta}{2n}\|\mathds{1}\otimes x^k \hspace{-0.8mm}-\hspace{-0.8mm} \sy^k\|^2+\frac{L^2}{n}\sum_{i=1}^{n}\| y_{i}^{\tau (k,i)+1} - y_{i}^{\tau (k,i)}\|^2 \nonumber \\
   \overset{(a)}{\ge}&r(x^k)\hspace{-0.8mm}+\hspace{-0.8mm}  \frac{1}{n}\sum_{j=1}^n\big(f_j(x^{k}) \hspace{-0.8mm}+\hspace{-0.8mm}  \langle  \nabla f_{j}(y_{j}^{\tau(k, j)})\hspace{-0.8mm}-\hspace{-0.8mm} \nabla f_{j}(y_{j}^{k}), x^k \hspace{-0.8mm}-\hspace{-0.8mm}  y_j^{k}\rangle\big)  \nonumber\\
        &+\hspace{-0.8mm} \frac{\beta-L}{2n}\|\mathds{1}\otimes x^k \hspace{-0.8mm}-\hspace{-0.8mm} \sy^k\|^2+\frac{L^2}{n}\sum_{i=1}^{n}\| y_{i}^{k} - y_{i}^{\tau (k,i)}\|^2 \nonumber \\
    \overset{(b)}{\ge} & r(x^k)+ \frac{1}{n}\sum_{j=1}^n\big(f_j(x^{k}) -\|\nabla f_{j}(y_{j}^{\tau(k, j)})- \nabla f_{j}(y_{j}^{k})\|^2\big)  \nonumber\\
        &+\hspace{-0.8mm} \frac{\beta-L-2}{2n}\|\mathds{1}\otimes x^k \hspace{-0.8mm}-\hspace{-0.8mm} \sy^k\|^2+\frac{L^2}{n}\sum_{i=1}^{n}\| y_{i}^{k} - y_{i}^{\tau (k,i)}\|^2 \nonumber \\
          \overset{(c)}{\ge} & \min_{x}\Big\{r(x^k)+ \frac{1}{n}\sum_{j=1}^n f_j(x^{k})\Big\} +\hspace{-0.8mm} \frac{2L^2}{2n}\|\mathds{1}\otimes x^k \hspace{-0.8mm}-\hspace{-0.8mm} \sy^k\|^2\label{eq:Mk_lb}
\\
          \overset{(d)}{>}&-\infty,\notag
          \end{align}
where (a) holds because each $f_j$ is Lipschitz differentiable, (b) holds due Young's inequality, (c) follows from the assumption $\beta>2L^2+L+2$ and the Lipschitz smoothness of each $f_j$, and (d) holds due to Assumption \ref{coercivity}. Therefore, $M_{\beta}^k$ is bounded from below.
\end{proof}% {\color{black} the inequality of arithmetic and geometric means?}

With  above lemmas, we are ready to prove Lemma \ref{lm:1}.
\begin{proof}[Proof of Lemma \ref{lm:1}]
Recall that the maximal hitting time $T$ is almost surely finite.
The monotonicity of $(L_{\beta}^k)_{k> T}$ (resp. $(M_{\beta}^k)_{k>T}$) in Lemma \ref{lm:lyapunov_descent} and their lower boundedness in Lemma \ref{lm:lyapunov_func_lowerbounded} ensure convergence of $(L_\beta^k)_{k\ge 0}$ (resp. $(M_\beta^k)_{k\ge 0}$).
% is convergent for Walkman with $\sy$ updated by \eqref{eq:admm3y'} (resp. \eqref{eq:admmy-linearized}).
%Statement 1) holds by  combining Lemmas \ref{lemma:x_descent} and \ref{lemma:yz_descent}.
 
For statement 2), 
We first consider Walkman with \eqref{eq:admm3y'}.
By statement 1) and \eqref{eq:lowerbound_L-1}, $r(x^k)+\frac{1}{n}F(\vx^k)$  is upper bounded by $\max\{\max_{t\in\{0,\cdots, T\}}\{r(x^t)+\frac{1}{n}F(\vx^t)\}, L_{\beta}^{T+1}\}$, and $\|\mathbf{1}\otimes x^k- \sy^k\|^2$ is upper bounded by $\max\{\max_{t\in\{0,\cdots, T\}}\{\|\mathbf{1}\otimes x^t- \sy^t\|^2\}, L_{\beta}^{T+1}\}$.
%,  $L_\beta^k$ is upper bounded by $\max_{t\in\{0,\cdots, T+1\}}\{L_\beta^t\}$ and, by \eqref{eq:lowerbound_L-1}, $r(x^k)+\frac{1}{n}F(\vx^k)$ and $\frac{\beta -L}{2n}\|\mathbf{1}\otimes x^k- \sy^k\|^2$  are upper bounded by $\max_{t\in\{0,\cdots, T\}}\{r(x^t)+\frac{1}{n}F(\vx^t)\}$.
By Assumption \ref{coercivity},
the  sequence $(x^k)$ is bounded. The boundedness of $\|\mathbf{1}\otimes x^k- \sy^k\|^2$ further leads to that of $(\sy^k)$. Finally,  \eqref{eq:z_gradf} and Assumption \ref{ass-lipschitz} ensure $(\sz^k)$ is bounded, too. Altogether, $(x^k,\sy^k,\sz^k)$ is bounded.
Starting from statement 1)' and \eqref{eq:Mk_lb}, a similar argument leads to boundedness of $(x^k,\sy^k,\sz^k)$  for Walkman using \eqref{eq:admmy-linearized}.
\end{proof}

\section{Proof of Lemma \ref{lm:gk_convg}}
Following the aforementioned proof idea, we provide the detailed proof of Lemma \ref{lm:gk_convg} in this Section.
\begin{proof}[Proof of Lemma \ref{lm:gk_convg}]
First, % we recall an important fact.
recall  Lemma \ref{lm:lyapunov_descent} and $T<\infty$, for Walkman using \eqref{eq:admm3y'}, we have
        \begin{align}\label{eq:vardiff_lim}
        \sum_{k=0}^{\infty} \left(\EE\|x^k - x^{k+1}\|^2+ \EE \|\sy^{k} - \sy^{k+1}\|^2\right)< +\infty;
        \end{align}
 and for Walkman using \eqref{eq:admmy-linearized},
 \begin{align}\label{eq:vardiff_lim-l}
        \sum_{k=0}^{\infty} \Big(&\EE\|x^k - x^{k+1}\|^2+ \EE \|\sy^{k} - \sy^{k+1}\|^2 \notag\\
        &+\EE \|{\sy^{\tau(k, i_k)} - \sy^{\tau(k, i_k)+1}}\|_2^2\Big)< +\infty,
        \end{align}
Hence, by Lemma \ref{lm:lyapunov_descent},
\begin{align}
        \sum_{k=0}^{\infty} \Big(\EE\|x^k - x^{k+1}\|^2+& \EE \|\sy^{k} - \sy^{k+1}\|^2 \notag\\
        &+\EE \|\sz^{k} - \sz^{k+1}\|_2^2\Big)< +\infty,\label{eq:summable}
\end{align}
holds for Walkman using either \eqref{eq:admm3y'} or \eqref{eq:admmy-linearized}.

The proof starts with computing the subdifferentials of the augmented Lagrangian \eqref{eq-Lagrangian-equiv} with the updates in \eqref{eq:admm3'}:
\begin{align}
\partial_x L_{\beta}^{k+1}~{\ni} ~& d^k-\frac{\beta}{n}(y_{i_k}^{k+1} - y_{i_k}^k) +\frac{1}{n}(z_{i_k}^{k+1} - z_{i_k}^k),\nonumber\\
\overset{\eqref{eq:dk_def}}{=}&\hspace{-0.8mm}-\frac{\beta}{n}(y_{i_k}^{k+1} - y_{i_k}^k) +\frac{1}{n}(z_{i_k}^{k+1} - z_{i_k}^k)=: w^k,\label{eq:partialx_def}\\
\nabla_{y_j}  L_\beta^{k+1} = &\frac{1}{n} \left(\nabla f_{j}(y_{j}^{k+1}) - z_{j}^{k+1} + \beta (y_{j}^{k+1} - x^{k+1})\right),\label{eq:partialy_def}\\
\nabla_{z_j}  L_\beta^{k+1} =& \frac{1}{n}\left(x^{k+1}-y_{j}^{k+1}\right).\label{eq:partialz_def}
\end{align}
%By stacking all (those related to the $v$-th agent) of the above partial differentials,
For notational brevity, we define $g^k$ and $q_{\color{black}i}^k$ as
\begin{align}\label{eq:gk_def}
                g^k  := \left[\begin{array}{c} w^k \\ \nabla_{\ssy} L_\beta^{k+1}\\ \nabla_{\ssz} L_\beta^{k+1} \end{array}\right], ~
                q_{i}^{k}:=\left[\begin{array}{c} w^k\\ \nabla_{y_{i}} L_\beta^{k+1} \\ \nabla_{z_i} L_\beta^{k+1}\end{array}\right],
\end{align}
        where $i\in V$ is an agent index, and $g^k$ is the gradient of $L_\beta^{k+1}$.
For  $\delta\in(0,1)$ and $k\geq \uptau(\delta)+1$, by the triangle inequality:
\begin{align}
\|q_{i_k}^{k-\uptau(\delta)-1}\|^2=& \|q_{i_k}^{k-\uptau(\delta)-1} - q_{i_k}^{k}+q_{i_k}^{k}\|^2\nonumber\\
\leq & 2\underbrace{\|q_{i_k}^{k-\uptau(\delta)-1} - q_{i_k}^{k}\|^2}_{A} + 2\underbrace{\|q_{i_k}^{k}\|^2}_{B}\label{eq:q_bound1}%\nonumber\\
%\leq & 2\underbrace{\|g^{k-\uptau(\delta)-1} -g^{k}\|^2}_{A}+  2\underbrace{\|q_{i_k}^{k}\|^2}_{B}
\end{align}
Below, we upper bound $A$ and $B$ separately. 
%For ease of expression, next we unify the two parts of Lemma \ref{}, by stating 
%\begin{align}
%\|\sz^{k+1}-\sz^k\|^2\le L^2\big(\|\sy^{k+1}-\sy^k\|^2 +\|\sy^{}\|\big)
%\end{align}
%
%To bound term $A$ as \eqref{eq:partA_bound} below, we bound each of the three parts of $A$. 
$A$ has three parts corresponding to the three components of $g$. Its first part is
%\begin{align}\label{eq:wk_diff}
%\|w^{k-\uptau(\delta)-1}  - w^{k}\|^2 &\leq 2\|w^{k-\uptau(\delta)-1} \|^2 + 2\| w^k\|^2 \nonumber \\
%\leq \hspace{-0.8mm} \frac{2(\beta \hspace{-0.8mm}+ \hspace{-0.8mm}L)^2}{n^2}\Big(\|\sy^{k+1}  \hspace{-0.8mm}- &\sy^k\|^2 \hspace{-0.8mm}+ \hspace{-0.8mm}\|\sy^{k-\uptau(\delta)} \hspace{-0.8mm} -  \hspace{-0.8mm}\sy^{k-\uptau(\delta)-1}\|^2\Big),
%\end{align}
\begin{align}\label{eq:wk_diff}
\|w^{k-\uptau(\delta)-1}  &- w^{k}\|^2 \leq 2\|w^{k-\uptau(\delta)-1} \|^2 + 2\| w^k\|^2 \nonumber \\
\leq \hspace{-0.8mm} \frac{4}{n^2}\Big(\beta^2\|\sy^{k+1}  \hspace{-0.8mm}&- \sy^k\|^2 \hspace{-0.8mm}+ \hspace{-0.8mm}\beta^2\|\sy^{k-\uptau(\delta)} \hspace{-0.8mm} -  \hspace{-0.8mm}\sy^{k-\uptau(\delta)-1}\|^2,\notag\\
+\|\sz^{k+1}  \hspace{-0.8mm}&- \sz^k\|^2 \hspace{-0.8mm}+ \hspace{-0.8mm}\|\sz^{k-\uptau(\delta)} \hspace{-0.8mm} -  \hspace{-0.8mm}\sz^{k-\uptau(\delta)-1}\|^2\Big)
\end{align}
where the 2nd inequality follows from \eqref{eq:partialx_def}.
Then by \eqref{eq:partialy_def}, we bound the 2nd part of $A$
\begin{align}
&\|\nabla_{y_{i_k}} L_{\beta}^{k-\uptau(\delta)-1} -\nabla_{y_{i_k}} L_{\beta}^{k+1}\|^2 \nonumber\\
\overset{\rm(a)}{\leq}& \frac{4L^2 + 4\beta^2}{n^2}\|y_{i_k}^{k-\uptau(\delta)-1}-y_{i_k}^{k+1}\|^2 + \frac{4}{n^2}\|z_{i_k}^{k-\uptau(\delta)-1} - z_{i_k}^{k+1}\|^2 \nonumber\\
&+ \frac{4\beta^2}{n^2}\|x^{k-\uptau(\delta)-1}-x^{k+1}\|^2\nonumber \\
\leq & \frac{(\uptau(\delta)+2)(4+4\beta^2+4L^2)}{n^2}\hspace{-3mm}\sum_{t=k-\uptau(\delta)-1}^{k}\hspace{-3mm}\Big(\|x^t -x^{t+1}\|^2\nonumber\\
&+\|\sy^t -\sy^{t+1}\|^2 +\|\sz^t -\sz^{t+1}\|^2\Big),\label{eq:partialy_diff}
%+\|y_{i_k}^t -y_{i_k}^{t+1}\|^2 +\|z_{i_k}^t -z_{i_k}^{t+1}\|^2\Big),\label{eq:partialy_diff}
\end{align}
where (a) uses the inequality of arithmetic and geometric means and the Lipschitz differentiability of $f_j$  in Assumption \ref{ass-lipschitz}.
From \eqref{eq:partialz_def}, the 3rd part of $A$ can be bounded as
\begin{align}
&\|\nabla_{z_{i_k}} L_{\beta}^{k-\uptau(\delta)-1} -\nabla_{z_{i_k}} L_{\beta}^{k+1}\|^2\nonumber \\
\leq &\frac{2}{n^2}(\|x^{k-\uptau(\delta)-1}-x^{k+1}\|^2 + \|y_{i_k}^{k-\uptau(\delta)-1}-y_{i_k}^{k+1}\|^2)\nonumber\\
\leq & \frac{2(\uptau(\delta)+2)}{n^2}\hspace{-4.5mm}\sum_{t=k-\uptau(\delta)-1}^{k}\hspace{-4.5mm}\Big(\|x^t -x^{t+1}\|^2+\|\sy^t -\sy^{t+1}\|^2\Big).\label{eq:partialz_diff}
%\|y_j^t -y_j^{t+1}\|^2 
\end{align}
Substituting  \eqref{eq:wk_diff}, \eqref{eq:partialy_diff} and \eqref{eq:partialz_diff} into term $A$, we get a constant $C_1$ $\sim O(\frac{\uptau(\delta)+1}{n^2})$, depending on $\uptau(\delta), \beta, L$ and $n$, such that
\begin{align}\label{eq:partA_bound}
A \hspace{-0.8mm}\leq  \hspace{-0.8mm}C_1\hspace{-7mm}\sum_{t=k-\uptau(\delta)-1}^{k}\hspace{-7mm}\big(\|x^t \hspace{-0.8mm}-\hspace{-0.8mm}x^{t+1}\|^2\hspace{-0.8mm}+\hspace{-0.8mm}\|\sy^t\hspace{-0.8mm} -\hspace{-0.8mm}\sy^{t+1}\|^2 \hspace{-0.8mm}+\hspace{-0.8mm} \|\sz^t\hspace{-0.8mm} -\hspace{-0.8mm}\sz^{t+1}\|^2\big).
\end{align}
To bound the term $B$, using \eqref{eq:partialz_def} and \eqref{eq:admm3z'}, we have
\begin{align}
\nabla_{z_{i_k}} L_\beta^{k+1} {=}& \frac{1}{n\beta}(z_{i_k}^{k+1} - z_{i_k}^k), \label{eq:partialzik_def}
\end{align}
Applying \eqref{eq:partialy_def}, \eqref{eq:z_gradf} and \eqref{eq:z_gradf-l}, we derive $\nabla_{y_{i_k}}  L_\beta^{k+1}$ for Walkman using \eqref{eq:admm3y'} or \eqref{eq:admmy-linearized}:
\begin{align}
\text{\eqref{eq:admm3y'}:}~\nabla_{y_{i_k}}  L_\beta^{k+1} {=} &\frac{1}{n}\left(z_{i_k}^k - z_{i_k}^{k+1}\right),\label{eq:partialyik_def}\\
\text{\eqref{eq:admmy-linearized}:}~\nabla_{y_{i_k}} \hspace{-2pt} L_\beta^{k+1} {=} &\frac{1}{n}\big( \nabla f_{i_k}(y_{i_k}^{k+1}) -\nabla f_{i_k}(y_{i_k}^k)+z_{i_k}^k - z_{i_k}^{k+1} \big).
%\notag\\
%&~~~+z_{i_k}^k - z_{i_k}^{k+1} \big).
%\label{eq:partialyik_def-l}
\end{align}
%where the above two equality  holds because of \eqref{eq:admm3z'} and \eqref{eq:z_gradf}.
%Substi \eqref{eq:partialyik_def}, \eqref{eq:partialyik_def} and \eqref{eq:wk_bound}, we have
For both, we have
\begin{align}\label{eq:partB_bound}
B \leq C_2\left(\|\sy^{k+1} - \sy^k\|^2+\|\sz^{k+1} - \sz^k\|^2\right),
\end{align}
for a constant $C_2$ depending on $L, \beta$ and $n$, { in the order of {$\sim O(\frac{1}{n^2})$}}.
%$C_2:=\frac{L^2+ (\beta+L)^2}{n^2}+\frac{L^2}{n^2\beta^2}$.
Then substituting \eqref{eq:partA_bound} and \eqref{eq:partB_bound} into \eqref{eq:q_bound1} and taking expectations yield
\begin{align}
\EE\|q_{i_k}^{k-\uptau(\delta)-1}\|^2\hspace{-0.8mm}\leq\hspace{-0.8mm} C\hspace{-6mm}\sum_{t=k-\uptau(\delta)-1}^{k}\hspace{-5mm}\Big(&\EE\|x^t\hspace{-0.8mm}-\hspace{-0.8mm}x^{t+1}\|^2\hspace{-0.8mm}+\hspace{-0.8mm}\EE\|\sy^t \hspace{-0.8mm}-\hspace{-0.8mm}\sy^{t+1}\|^2\notag\\
&+\EE\|\sz^t -\sz^{t+1}\|^2\Big),\label{eq:q_bound_ori}
\end{align}
where $C=C_1\hspace{-0.8mm}+\hspace{-0.8mm}C_2$, { and one has $C\sim O(\frac{\uptau(\delta)+1}{n^2})$}. 
%With statements 3) and 4) of Lemma \ref{lm:1}, we get
%\begin{align}\label{eq:vardiff_lim}%\frac{2}{n(\beta-\gamma)}
%\sum_{k=0}^{\infty} \left(\EE\|x^k - x^{k+1}\|^2+ \EE \|\sy^{k} - \sy^{k+1}\|_2^2\right)< +\infty.
%\end{align}
Recalling \eqref{eq:summable}, we get the convergence
\begin{align}\label{eq:qlim}
\lim_{k\to\infty}\EE\|q_{i_k}^{k-\uptau(\delta)-1}\|^2=0,
\end{align}
which completes the proof of step 1).

In step 2), we compute the conditional expectation:
        \begin{align}
        &\EE\left(\|q_{i_k}^{k-\uptau(\delta)-1}\|^2\mid\chi^{k-\uptau(\delta)}\right) \nonumber\\
        = &\sum_{j=1}^n [\vP^{\uptau(\delta)}]_{i_{k-\uptau(\delta)}, j}\Big(\|\nabla_x L_{\beta}^{k-\uptau(\delta)} \|^2 + \|\nabla_{y_j}L_\beta^{k-\uptau(\delta)} \|^2 \nonumber\\
        &+\|\nabla_{z_j}L_\beta^{k-\uptau(\delta)} \|^2\Big)\nonumber\\
        \overset{\rm (a)}{\geq}&(1-\delta)\pi_*\|g^{k-\uptau(\delta)-1}\|^2,\label{eq:q_bound_g}
        \end{align}
where (a) follows from \eqref{eq:mix_time_prop} and the definition of $g^k$  in \eqref{eq:gk_def}.
Then, with \eqref{eq:qlim}, it holds
        \begin{align}
        \lim_{k\to\infty}\EE\|g^{k}\|^2=\lim_{k\to\infty}\EE\|g^{k-\uptau(\delta)-1}\|^2 =0.
        \end{align}
        By the Schwarz inequality $\big(\EE\|g^{k}\|\big)^2\leq \EE\|g^{k}\|^2$, we have
        \begin{align}\label{eg->0}
                \lim_{k\to \infty} \EE\|g^k\| = 0.
        \end{align}
 Next, we prove step 3). By Markov's inequality, for each $\epsilon>0$, it holds that
        \begin{align}
        \Pr(\|g^{k}\|\geq\epsilon)\leq \frac{ \EE\|g^{k}\|}{\epsilon} \quad\overset{\eqref{eg->0}}{\Rightarrow}\quad \lim_{k\to\infty} \Pr(\|g^{k}\|\geq\epsilon) = 0.\label{eq:markov_ineq}
        \end{align}
When a subsequence $(k_s)_{s\ge 0}$ is provided, \eqref{eq:markov_ineq} implies.
 \begin{align}
 \lim_{s\to\infty} \Pr(\|g^{k_s}\|\geq\epsilon) = 0
 \end{align}
Then, for $j\in\NN$, select $\epsilon = 2^{-j}$ and we can find a nondecreasing subsubsequence $(k_{s_j})$, such that
\begin{align}
\Pr(\|g^{k_{s}}\|\ge 2^{-j})\le 2^{-j},\quad \forall k_s\ge k_{s_j}.
\end{align}
Since,
\begin{align}
\sum_{j=1}^\infty \Pr(\|g^{k_{s_j}}\|\ge 2^{-j})\le \sum_{j=1}^\infty 2^{-j} = 1,\label{eq:pre_bc}
\end{align}
the Borel-Cantelli lemma yields
\begin{align}
\Pr\big(\limsup_j\{\|g^{k_{s_j}}\|\ge 2^{-j}\} \big)=0,
%\e\includegraphics[]{main.pdf}
\end{align}
and thus
\begin{align}
\Pr\big(\lim_j\|g^{k_{s_j}}\|=0\big) = 1. \label{eq:lm2_res}
\end{align}
This completes step 3) and thus the entire Lemma \ref{lm:gk_convg}.
\end{proof}

{\color{black} 
\section{Proof of Theorem \ref{thm:convg_rate_nonconvex}}\label{appendixC}
We provide the detailed proof of Theorem \ref{thm:convg_rate_nonconvex}  in this section.
\begin{proof}[Proof of Theorem \ref{thm:convg_rate_nonconvex}]
It can be simply verified that under the specific initialization, \eqref{eq:z_gradf} and \eqref{eq:z_gradf-l} hold for all $k\ge 0$, and consequently ensure Lemmas \ref{lm:z_boundby_y}-\ref{lm:lyapunov_descent} hold for all $k\ge 0$. 
For $g^k$ defined in \eqref{eq:gk_def}, \eqref{eq:q_bound_ori} and \eqref{eq:q_bound_g} hold.
Jointly applying \eqref{eq:q_bound_ori} and \eqref{eq:q_bound_g}, for any $k\ge \uptau(\delta)+1$, one has
\begin{align}
\EE \|g^{k-\uptau(\delta)-1}\|^2\le &\frac{C}{(1-\delta)\pi_*}\hspace{-1.5mm}\sum_{t=k-\uptau(\delta)-1}^{k}\hspace{-5mm}\Big(\EE\|x^t\hspace{-0.8mm}-\hspace{-0.8mm}x^{t+1}\|^2\hspace{-0.8mm}\notag\\
&+\hspace{-0.8mm}\EE\|\sy^t \hspace{-0.8mm}-\hspace{-0.8mm}\sy^{t+1}\|^2\hspace{-0.8mm}+\hspace{-0.8mm}\EE\|\sz^t -\sz^{t+1}\|^2\Big)
\end{align}
According to Lemmas \ref{lm:z_boundby_y} and \ref{lm:lyapunov_descent}, for Walkman using \eqref{eq:admm3y'} and $k\ge \uptau(\delta)+1$, it holds,
\begin{align}
&\sum_{t=k-\uptau(\delta)-1}^{k}\hspace{-5mm}\Big(\EE\|x^t\hspace{-0.8mm}-\hspace{-0.8mm}x^{t+1}\|^2\hspace{-0.8mm}+\hspace{-0.8mm}\EE\|\sy^t \hspace{-0.8mm}-\hspace{-0.8mm}\sy^{t+1}\|^2\hspace{-0.8mm}+\hspace{-0.8mm}\EE\|\sz^t -\sz^{t+1}\|^2\Big)\notag\\
&~~~~\le \max\{\frac{2}{\beta-\gamma}, (1+L^2)n\}(\EE L_{\beta}^{k-\uptau(\delta)-1} - \EE L_{\beta}^{k+1})
\end{align}
It implies that for any $k\ge 0$, it holds
\begin{align}
\EE \|g^{k}\|^2\le &C'(\EE L_{\beta}^{k} - \EE L_{\beta}^{k+\uptau(\delta)+2}), \label{eq:gk_forsum}
\end{align}
where $C':=\max\{\frac{2}{\beta-\gamma}, (1+L^2)n\}\frac{C}{(1-\delta)\pi_*}$. {\color{black} It can be simply verified that $C'=O(\frac{\uptau(\delta)+1}{(1-\delta)n\pi_*})$}
Let $\uptau':=\uptau(\delta)+2$.
Then for any $K>\uptau'$, summing \eqref{eq:gk_forsum} over $k\in\{K-\uptau',\cdots, {\rm mod}_{\uptau'}K\}$ gives
\begin{align}
\sum_{l=1}^{\lfloor\frac{K}{\uptau'}\rfloor} \EE\|g^{K-l\uptau'}\|^2&\le C'\left( \EE L_{\beta}^{ {\rm mod}_{\uptau'}K } - \EE L_{\beta}^{K}\right)\notag\\
&\le C'( L_{\beta}^0-\underline{f}), \label{eq:noncvx_rate_sum}
\end{align}
where the last inequality follows from the nondecreasing property of the sequence $(L_{\beta}^k)_{k\ge 0}$ and the fact that $(L_{\beta}^k)_{k\ge 0}$ is lower bounded by $\underline{f}$.

According to \eqref{eq:noncvx_rate_sum}, one has
\begin{align}
\min_{k\le K}\EE\|g^{k}\|^2 &\le \min_{1 \le l\le \lfloor\frac{K}{\uptau'}\rfloor}\EE\|g^{K-l\uptau'}\|^2\notag\\
 &\le  \frac{1}{ \lfloor\frac{K}{\uptau'}\rfloor}\sum_{l=1}^{\lfloor\frac{K}{\uptau'}\rfloor} \EE\|g^{K-l\uptau'}\|^2\notag\\
 &\le \frac{\uptau'C'}{K -\uptau'} ( L_{\beta}^0-\underline{f})\notag\\
 &\le \frac{C'(\uptau'+1)}{K}( L_{\beta}^0-\underline{f}),
\end{align}
where the constant  {\color{black} $C'(\uptau'+1)=O(\frac{\uptau(\delta)^2 + 1}{(1-\delta)n\pi_*})$}.
%That is, we get the $O((\uptau(\delta)^2 + 1)/nK)$ convergence rate of the subgradient sequence $(g^k)_{k\ge 0}$.
\end{proof}

We consider a reversible Markov chain on an undirected graph.
Recalling the definition of $\uptau(\delta)$ in \eqref{eq:def_J} and taking $\delta$ as $1/2$, one has $\uptau(\delta)\sim\frac{\ln n}{1-\lambda_2(\vP)}.$
That is, to guarantee that $\min_{k\le K}\EE\|g^{k}\|^2\le \epsilon$, 
\begin{align}
O\Big(\frac{1}{\epsilon}\cdot\Big(\frac{\ln^2 n}{(1-\lambda_2(\vP))^2} +1\Big)\Big)
\end{align}
iterations would be sufficient for Walkman using either  \eqref{eq:admm3y'} or \eqref{eq:admmy-linearized}.

%{\color{black} $0$-initialization: we need to revise Lemma 7-11. In such case, the result in Lemmas 8-10 has got two different version: (1) first visit; (2) revisit. The current version result holds for the revisit, we need another version named as first visit. The main difference comes from how to bound the dual ascent in the first visit?}

 }
%When solving nonconvex nonsmooth problem, the proposed Walkman consumes less communication on densely connected graph.

\ifCLASSOPTIONcaptionsoff
  \newpage
\fi

\bibliographystyle{IEEEbib}
\bibliography{reference}

\end{document}